\def\H{\widetilde{H}}
\def\R{{\mathbb R}}
\def\N{{\mathbb N}}
\def\EE{{\mathcal E}}
\def\HH{{\mathcal H}}
\def\MM{{\mathcal M}}
\def\NN{{\mathcal N}}
\def\PP{{\mathcal P}}
\def\RR{{\mathcal R}}
\def\SS{{\mathcal S}}
\def\TT{{\mathcal T}}
\def\UU{\mathcal U}
\def\XX{{\mathcal X}}
\def\diam{{\rm diam}}
\def\edual#1#2{\langle\hspace*{-1mm}\langle#1\,,\,#2\rangle\hspace*{-1mm}\rangle}
\def\norm#1#2{\|#1\|_{#2}}
\def\set#1#2{\big\{#1\,:\,#2\big\}}
\def\Crel{C_{\rm rel}}
\def\CCrel{\widetilde C_{\rm rel}}
\def\Ceff{C_{\rm eff}}
\def\Cinv{C_{\rm inv}}
\def\eps{\varepsilon}
\def\U{\mathbf{U}}
\def\slp{\mathfrak{V}} 
\def\dlp{\mathfrak{K}} 
\newcommand{\enorm}[2][]{#1|\hspace*{-.5mm}#1|\hspace*{-.5mm}#1|#2#1|\hspace*{-.5mm}#1|\hspace*{-.5mm}#1|}
\def\normL2#1#2{\|#1\|_{L^2(#2)}}
\newcommand{\dual}[3][]{#1\langle#2\,,\,#3#1\rangle}
\newcounter{constantsnumber}
\def\namec#1#2{%
 \ifthenelse{\equal{#1}{rel}}{C_{\rm rel}}{%
  \ifthenelse{\equal{#1}{mesh}}{C_{\rm mesh}}{%
  \ifthenelse{\equal{#1}{sz}}{C_{\rm sz}}{%
  \ifthenelse{\equal{#1}{dislocrel}}{C_{\rm dlr}}{%
  \ifthenelse{\equal{#1}{eff}}{C_{\rm eff}}{%
  \ifthenelse{\equal{#1}{main}}{C_{\rm V}}{%
  \ifthenelse{\equal{#1}{opt}}{C_{\rm opt}}{%
  \ifthenelse{\equal{#1}{normequiv}}{C_{\rm norm}}{%
  \ifthenelse{\equal{#1}{reliable}}{C_{\rm rel}}{%
  \ifthenelse{\equal{#1}{efficient}}{C_{\rm eff}}{%
  \ifthenelse{\equal{#1}{dlr}}{C_{\rm dlr}}{%
  \ifthenelse{\equal{#1}{reduction}}{C_{\rm red}}{%
    \ifthenelse{\equal{#1}{locall}}{C_{\rm loc}}{%
   \ifthenelse{\equal{#1}{unibound}}{C_{\rm hot}}{%
    \ifthenelse{\equal{#1}{hotConst}}{C_{\rm hot}}{%
   \ifthenelse{\equal{#1}{inverseK}}{C_{\rm K}}{%
  \ifthenelse{\equal{#1}{refined}}{C_{\rm ref}}{%
  \ifthenelse{\equal{#1}{estconv}}{C_{\rm est}}{%
    \ifthenelse{\equal{#1}{meshsize}}{C_{\rm mesh}}{%
  \ifthenelse{\equal{#1}{optimal}}{C_{\rm opt}}{%
  \ifthenelse{\equal{#1}{qo}}{C_{\rm qo}}{%
    \ifthenelse{\equal{#1}{elliptic}}{C_{\rm ell}}{%
  \ifthenelse{\equal{#1}{mon}}{C_{\rm mon}}{%
  \ifthenelse{\equal{#1}{cea}}{C_{\mbox{\rm\scriptsize C\'ea}}}{%
  \ifthenelse{\equal{#2}{newcounter}}{\refstepcounter{constantsnumber}\label{const#1}}{}C_{\ref{const#1}}}%
}}}}}}}}}}}}}}}}}}}}}}}}
\def\setc#1{\namec{#1}{newcounter}}
\def\c#1{\namec{#1}{reference}}
\newcounter{contractionnumber}
\def\nameq#1#2{%
  \ifthenelse{\equal{#1}{reduction}}{q_{\rm red}}{%
  \ifthenelse{\equal{#1}{estconv}}{q_{\rm est}}{%
  \ifthenelse{\equal{#1}{cea}}{q_{\mbox{\scriptsize C\'ea}}}{%
  \ifthenelse{\equal{#2}{newcounter}}{\refstepcounter{contractionnumber}\label{contraction#1}}{}q_{\ref{contraction#1}}}%
}}}
\def\namer#1#2{%
  \ifthenelse{\equal{#1}{reduction}}{\rho_{\rm red}}{%
  \ifthenelse{\equal{#1}{estconv}}{\rho_{\rm est}}{%
  \ifthenelse{\equal{#1}{cea}}{\rho_{\mbox{\scriptsize C\'ea}}}{%
  \ifthenelse{\equal{#1}{qo}}{\rho_{\mbox{\scriptsize qo}}}{%
  \ifthenelse{\equal{#2}{newcounter}}{\refstepcounter{contractionnumber}\label{contraction#1}}{}\rho_{\ref{contraction#1}}}%
}}}}
\def\ffor{\quad\text{for all }}
\def\k{\kappa}
\def\semiHs#1#2{|#1|_{H^{s}(#2)}}
\def\semiH#1#2{|#1|_{H^{1}(#2)}}
\def\semiHh#1#2{|#1|_{H^{1/2}(#2)}}
\def\patchref#1{\widehat{\omega}_{#1}}
\def\nodes{\mathcal{N}_{\star}}
\def\mesh{\mathcal{T}_{\star}}
\def\patch{\omega_{z}}
\def\sgrad#1{\nabla_{#1}}
\newtheorem{theorem}{Theorem}
\newtheorem{proposition}[theorem]{Proposition}
\newtheorem{lemma}[theorem]{Lemma}
\newtheorem{algorithm}[theorem]{Algorithm}
\newenvironment{remark}{\refstepcounter{theorem}\medskip\noindent\textbf{Remark~\thetheorem.}\ \it}{\qed\smallskip}
\def\subsection#1{\bigskip

\refstepcounter{subsection}{\bf\thesubsection.~#1.~~}}
\def\subsubsection#1{\bigskip

\refstepcounter{subsubsection}{\bf\thesubsubsection.{~#1.}~~}}
\title[Convergence of ABEM and adaptive FEM-BEM coupling]
{Convergence of adaptive BEM and adaptive FEM-BEM coupling
for estimators without $h$-weighting factor}
\author{Michael Feischl}
\author{Thomas F\"uhrer}
\author{Gregor Mitscha-Eibl}
\author{\\Dirk Praetorius}
\address{Institute for Analysis and Scientific Computing,
      Vienna University of Technology,
      Wiedner Hauptstra\ss{}e 8-10,
      A-1040 Wien, Austria}
\email{\{\,Michael.Feischl\,,\,Thomas.Fuehrer\,,\,Gregor.Mitscha-Eibl\,\}@tuwien.ac.at}
\email{Dirk.Praetorius@tuwien.ac.at \rm{(corresponding author)}}
\author{Ernst P.~Stephan}
\address{Institute for Applied Mathematics,
Leibniz University Hannover,
Welfengarten 1,
D-30167 Hannover, Germany}
\email{stephan@ifam.uni-hannover.de}
\date{\today}
\keywords{boundary element method (BEM), FEM-BEM coupling, a~posteriori error estimate, adaptive algorithm, convergence}
\subjclass[2000]{65N12, 65N38, 65N30, 65N50}
\begin{document}
\begin{abstract}
We analyze adaptive mesh-refining algorithms in the frame of boundary 
element methods (BEM) and the coupling of finite elements and boundary 
elements (FEM-BEM). Adaptivity is driven by 
the two-level error 
estimator proposed by Ernst P. Stephan, Norbert Heuer, and coworkers in 
the frame of BEM and FEM-BEM or by the residual error estimator introduced 
by Birgit Faermann for BEM for weakly-singular integral equations. We prove that in either case
the usual adaptive algorithm drives the associated error estimator to
zero. Emphasis is put on the fact that the error estimators considered
are \emph{not even globally} equivalent to weighted-residual error 
estimators for which recently convergence with quasi-optimal algebraic
rates has been derived. 
\end{abstract}
\maketitle

\def\Cmon{C_{\rm mon}}
\def\Ccont{C_{\rm cont}}
\section{Introduction}
\label{section:intro}

\noindent
A~posteriori error estimation and related adaptive mesh-refining algorithms
are one important basement of modern scientific computing. Starting from an
initial mesh $\TT_0$ and based on a computable a~posteriori error estimator,
such algorithms iterate the loop
\begin{align}\label{eq:semr}
 \boxed{\texttt{~solve~}}
 \quad\to\quad
 \boxed{\texttt{~estimate~}}
 \quad\to\quad
 \boxed{\texttt{~mark~}}
 \quad\to\quad
 \boxed{\texttt{~refine~}}
\end{align}
to create a sequence of successive locally refined meshes $\TT_\ell$,
corresponding discrete solutions $U_\ell$, as well as a~posteriori error
estimators $\mu_\ell$. We consider the frame of conforming Galerkin 
discretizations, where $\TT_\ell$ is linked to a finite-dimensional subspace 
$\XX_\ell$ of a Hilbert space $\HH$ with corresponding Galerkin solution
$U_\ell\in\XX_\ell$, where successive refinement guarantees nestedness
$\XX_\ell\subseteq\XX_{\ell+1}\subset\HH$ for all $\ell\in\N_0$. 

Convergence of this type of adaptive algorithm in the sense of 
\begin{align}
 \lim_{\ell\to\infty}\norm{u-U_\ell}\HH = 0
\end{align}
has first been addressed in~\cite{bv} for 1D FEM and~\cite{doerfler} 
for 2D FEM.
We note
that already the pioneering work~\cite{bv} observed that 
validity of some C\'ea-type quasi-optimality and nestedness
$\XX_\ell\subseteq\XX_{\ell+1}$ for all $\ell\in\N_0$ imply 
a~priori convergence
\begin{align}\label{eq:aprioriconv}
 \lim_{\ell\to\infty}\norm{U_\infty-U_\ell}\HH = 0,
\end{align}
where $U_\infty$ is the unique Galerkin solution
in $\XX_\infty:=\overline{\bigcup_{\ell\in\N_0}\XX_\ell}$.
From a conceptual point of view, it thus only remained to identify the limit
$u=U_\infty$.
Based on such an a~priori convergence result~\eqref{eq:aprioriconv}, a general theory of convergence 
of adaptive FEM is devised in~\cite{msv,siebert}, where the analytical focus
is on \emph{estimator convergence}
\begin{align}\label{eq:convergence}
 \lim_{\ell\to\infty}\mu_\ell = 0.
\end{align}
Moreover, the recent work~\cite{axioms} gives an analytical frame to guarantee convergence with optimal convergence rates;
see also the overview article~\cite{arcme} for the current state of the art of 
adaptive BEM.
Throughout, it is however implicitly assumed that the local contributions 
$\mu_\ell(T)$ of the error estimator $\mu_\ell$ are weighted with the 
local mesh-size, i.e., $|T|^{\alpha}$ for some appropriate $\alpha>0$, or that $\mu_\ell$ is \emph{locally} equivalent to a mesh-size weighted error estimator.

In this work, we consider two particular error estimators
whose local contributions are not weighted by the local mesh-size. 
We devise
a joint analytical frame which proves estimator convergence~\eqref{eq:convergence}.  First, we let $\mu_\ell$ be the Faermann error estimator~\cite{faermann2d,faermann3d,cf} for BEM for the weakly-singular 
integral equation with 
$\HH=\H^{-1/2}(\Gamma)$. The local contributions of $\mu_\ell$ are 
overlapping $H^{1/2}$-seminorms of the residual 
$F-AU_\ell \in H^{1/2}(\Gamma)$. The striking point of $\mu_\ell$ is that it is 
the only a~posteriori BEM error estimator which is known to be both reliable and
efficient without any further assumptions on the given data, i.e., it holds
\begin{align}
 \Ceff^{-1}\,\mu_\ell \le \norm{u-U_\ell}{\HH} \le \Crel\,\mu_\ell
\end{align}
with $\ell$-independent constants $\Ceff,\Crel>0$. We note that $\mu_\ell$
is not equivalent to an $h$-weighted error estimator which prevents to follow
the arguments from the available literature.
 
Second, our analysis covers the two-level error estimators for 
BEM~\cite{msw,mms,ms,hms,heuer,eh} or the adaptive FEM-BEM 
coupling~\cite{ms:fembem,kms,gms,afkp:fembem}. The local contributions are
projections of the computable error between two Galerkin solutions
onto one-dimensional spaces, spanned by hierarchical basis functions.
These estimators are known to be
efficient. On the other hand, reliability is only proven under an appropriate
saturation assumption which is even equivalent to reliability for the 
symmetric BEM operators~\cite{effp,efgp,hypsing3d}. However, such a saturation
assumption is formally equivalent to asymptotic convergence of the adaptive algorithm
\cite{fp} which cannot be guaranteed mathematically in general and is 
expected to fail on coarse meshes.

\textbf{Outline.}
The remainder of the paper is organized as follows: In Section~\ref{section:abstract}, 
we introduce an abstract frame which covers both BEM as well as the FEM-BEM coupling.
We formally state the adaptive loop (Algorithm~\ref{algorithm}). Under three
assumptions on the error estimator which are later verified for the particular
model problems, we prove that the adaptive loop drives the underlying error estimator
to zero (Proposition~\ref{prop:convergence} and Proposition~\ref{prop:new}). 
Section~\ref{section:symm} treats the weakly-singular integral equation associated
with the Laplacian. We prove that two-level error estimator 
(Theorem~\ref{thm:symm:twolevel}) as well as Faermann error estimator 
(Theorem~\ref{thm:faermann-convergence}) fit into the abstract framework.
In Section~\ref{section:hypsing}, we consider the hyper-singular integral equation
associated with the Laplacian. We prove that the two-level error estimator
fits into the abstract framework (Theorem~\ref{thm:hypsing:twolevel}).
The final Section~\ref{section:fembem} considers a nonlinear Laplace transmission
problem which is reformulated by some FEM-BEM coupling. We prove that the 
two-level error estimator fits into the abstract framework as well
(Theorem~\ref{thm:fembem:twolevel}).

\textbf{Notation.}
Associated quantities are linked through the same index, i.e., $U_\star$ is the 
discrete solution with respect to the discrete space $\XX_\star$ which corresponds 
to the triangulation $\TT_\star$. Throughout, the star is understood as general 
index and may be accordingly replaced by the level of the adaptive algorithm 
(e.g., $U_\ell$) or by the infinity symbol (e.g., $\XX_\infty$).
All constants as well as their dependencies are explicitly given in statements and  
results. In proofs, we shall use $A\lesssim B$ to abbreviate $A\le c\,B$ with 
some generic multiplicative constant $c>0$ which is clear from the context. 
Moreover, $A\simeq B$ abbreviates $A\lesssim B \lesssim A$.

\section{Abstract setting}
\label{section:abstract}

\subsection{Model problem}
Let $\HH$ be a Hilbert space with dual space $\HH^*$ and $A:\HH\to\HH^*$ be 
a bi-Lipschitz continuous operator, i.e.,
\begin{align}\label{eq:cont}
 \Ccont^{-1}\,\norm{w-v}\HH\leq \norm{Aw-Av}{\HH^*} \le \Ccont \, \norm{w-v}\HH
\end{align}
for all $v,w\in\HH$. Here, $\norm\cdot{\HH^*}$ denotes the operator norm on 
$\HH^*$,
\begin{align}
 \norm{F}{\HH^*} = \sup_{v\in\HH\backslash\{0\}}\frac{|\dual{F}{v}|}{\norm{v}\HH}
 \quad\text{for all }F\in\HH^*.
\end{align}
Suppose that there exists some subspace $\XX_{00}\subseteq \HH$ such that for any given closed subspace $\XX_{00}\subseteq \XX_\star\subseteq \HH$ and any
continuous linear functional $F\in\HH^*$ on $\HH$, the Galerkin formulation
\begin{align}\label{eq:galerkin}
 \dual{AU_\star}{V_\star} 
 = \dual{F}{V_\star}
 \quad\text{for all }V_\star\in\XX_\star
\end{align}
admits a unique solution $U_\star\in\XX_\star$, where $\dual\cdot\cdot$ denotes the duality bracket between $\HH$ and its dual $\HH^*$.
Particularly, this implies the existence of a unique solution $u\in\HH$ of
\begin{align}\label{eq:continuous}
 A u = F.
\end{align}
 Moreover, we suppose that there holds the 
C\'ea-type estimate
\begin{align}\label{eq:cea}
 \norm{u-U_\star}\HH
 \le \c{cea}\,\min_{V_\star\in\XX_\star} \norm{u-V_\star}\HH,
\end{align}
where the constant $\c{cea}>0$ depends only on the operator $A$ (and possibly on $F$). 
To be precise,
we will write $u=u(F)$ and $U_\star = U_\star(F)$ in the following to indicate
that $u(F)$ resp.\ $U_\star(F)$ are the unique solutions with respect to some
given right-hand side $F\in\HH^*$.

\begin{remark}
{\rm (i)} The assumptions~\eqref{eq:cont}--\eqref{eq:cea} are particularly satisfied with $\XX_{00}=\{0\}$, $\Ccont=\max\{\widetilde C_{\rm cont},\Cmon^{-1}\}$, and $\c{cea}=\widetilde C_{\rm cont}/\Cmon$ if $A$ is Lipschitz continuous and strongly monotone in the sense
 \begin{align}\label{eq:cmon}
 \norm{Aw-Av}{\HH^*} \le \widetilde C_{\rm cont} \, \norm{w-v}\HH\quad\text{and}\quad\Cmon \norm{w-v}{\HH}^2\leq \dual{Aw-Av}{w-v}
 \end{align}
 for all $v,w\in\HH$; see e.g.~\cite[Section~25.4]{zeidler} for the corresponding proofs. In particular, this also covers linear problems in the frame of the Lax-Milgram lemma, e.g., the symmetric BEM formulations of Section~\ref{section:symm}--\ref{section:hypsing}.
 
 {\rm (ii)} The assumptions~\eqref{eq:cont}--\eqref{eq:cea} are motivated by the FEM-BEM coupling formulations in Section~\ref{section:fembem}.
 
 {\rm (iii)} For $A$ being linear, it is also sufficient if additionally to~\eqref{eq:cont}, $A$ satisfies a uniform inf-sup-condition along the sequence of discrete subspaces $\XX_\ell$ generated by Algorithm~\ref{algorithm} below.
\end{remark}

\subsection{Adaptive algorithm}
We shall assume that $\XX_\ell$ is a finite-dimensional subspace of $\HH$ 
related to some triangulation $\TT_\ell$ and that $U_\ell(F)\in\XX_\ell$ is 
the corresponding Galerkin solution~\eqref{eq:galerkin} for $\XX_\star=\XX_\ell$. Starting from an initial mesh $\TT_0$, the 
triangulations $\TT_\ell$ are 
successively refined by means of the following realization of~\eqref{eq:semr}, where 
\begin{align}
 \mu_\ell(F) := \mu_\ell(F;\TT_\ell)
 \quad\text{with}\quad
 \mu_\ell(F;\EE_\ell):=\Big(\sum_{T\in\EE_\ell}\mu_\ell(F;T)^2\Big)^{1/2}<\infty
 \quad\text{for all }\EE_\ell\subseteq\TT_\ell
\end{align}
is a computable a~posteriori error estimator. Its local contributions $\mu_\ell(F;T)\ge0$
measure, at least heuristically, the error $u(F)-U_\ell(F)$ locally on 
each element $T\in\TT_\ell$.

\begin{algorithm}\label{algorithm}
{\sc Input:} Right-hand side $F\in\HH^*$, initial mesh $\TT_0$ with $\XX_0\supseteq \XX_{00}$, and bulk
parameter $0<\theta\le1$.\\ 
For $\ell=0,1,2,\dots$ iterate the following:
\begin{itemize}
\item[\rm(i)] Compute Galerkin solution $U_\ell(F)\in\XX_\ell$.
\item[\rm(ii)] Compute refinement indicators $\mu_\ell(F;T)$ for all $T\in\TT_\ell$.
\item[\rm(iii)] Determine some set $\MM_\ell\subseteq\TT_\ell$ of marked elements which satisfies
\begin{align}\label{eq:doerfler}
 \theta\,\mu_\ell(F)^2 \le \mu_\ell(F;\MM_\ell)^2.
\end{align}
\item[\rm(iv)] Generate a new mesh $\TT_{\ell+1}$ and hence an enriched space 
$\XX_{\ell+1}$ by refinement of at least all marked elements $T\in\MM_\ell$.
\end{itemize}
{\sc Output:} Sequence of successively refined triangulations $\TT_\ell$ as well as 
corresponding Galerkin solutions $U_\ell(F)\in\XX_\ell$ and error estimators $\mu_\ell(F)$, 
for $\ell\in\N_0$.
\end{algorithm}

\subsection{Auxiliary estimator and assumptions}
The following convergence results of Proposition~\ref{prop:convergence}
and Proposition~\ref{prop:new} 
require an auxiliary error estimator
\begin{align}
 \rho_\ell(F) := \rho_\ell(F;\TT_\ell)
 \quad\text{with}\quad
 \rho_\ell(F;\EE_\ell):=\Big(\sum_{T\in\EE_\ell}\rho_\ell(F;T)^2\Big)^{1/2}<\infty
 \quad\text{for all }\EE_\ell\subseteq\TT_\ell
\end{align}
with local contributions $\rho_\ell(F;T)\ge0$. 
For all $\ell\in\N_0$, we suppose that there exists some superset 
$\MM_\ell\subseteq\RR_\ell\subseteq\TT_\ell$ which satisfies the 
following three assumptions~\eqref{ass:local}--\eqref{ass:stable}:
\begin{enumerate}
\renewcommand{\theenumi}{A\arabic{enumi}}
\item\label{ass:local}
$\mu_\ell(F)$ is a local lower bound of $\rho_\ell(F)$: There is a constant
$\setc{local}>0$ such that for all $\ell\in\N_0$ holds
\begin{align}
 \mu_\ell(F;\MM_\ell)\le\c{local}\,\rho_\ell(F;\RR_\ell).
\end{align}
\item\label{ass:contraction}
$\rho_\ell(F)$ is contractive on $\RR_\ell$: There is a constant $\setc{inv}>0$ 
such that for all $\ell,k\in\N_0$ and all $\delta>0$ holds
\begin{align}
 \c{inv}^{-1}\,\rho_\ell(F;\RR_\ell)^2
 \le \rho_\ell(F)^2 - \frac{1}{1+\delta}\,\rho_{\ell+k}(F)^2
 + (1+\delta^{-1})\c{inv}\,\norm{U_{\ell+k}(F)-U_\ell(F)}\HH^2.
\end{align}
\end{enumerate}
The constants $\c{local},\c{inv}>0$ may depend on $F$, but are independent of the step $\ell\in\N_0$, i.e., in particular independent of 
the discrete spaces $\XX_\ell$ and the corresponding Galerkin solutions 
$U_\ell(F)$. If $\rho_\ell(F)$ is not well-defined for all $F\in\HH^*$, but
only on a dense subset $D\subseteq\HH^*$, we require the following additional
assumption:
\begin{enumerate}
\renewcommand{\theenumi}{A\arabic{enumi}}
\setcounter{enumi}{2}
\item\label{ass:stable}
$\mu_\ell(\cdot)$ is stable on $\MM_\ell$ with respect to $F$:
There is a constant $\setc{stable}>0$ such that for all $\ell\in\N_0$ and 
$F'\in\HH^*$ 
holds
\begin{align}
|\mu_\ell(F;\MM_\ell)-\mu_\ell(F';\MM_\ell)|
 \le \c{stable}\norm{F-F'}{\HH^*}.
\end{align}
\end{enumerate}

\definecolor{gray}{rgb}{.5,.5,.5}
\def\next{{\color{gray}\Large$\bullet$~}}
\subsection{Remarks}
Some remarks are in order to relate the abstract 
assumptions~\eqref{ass:local}--\eqref{ass:stable} to the applications,
we have in mind.

\next
\textbf{Choice of $\boldsymbol{\rho_\ell}$.}
Below, we shall verify that assumptions~\eqref{ass:local}--\eqref{ass:stable}
hold with 
$\mu_\ell(F)$ being the Faermann error estimator~\cite{faermann2d,faermann3d,cf} 
for BEM resp.\ $\mu_\ell(F)$ being the two-level error estimator for 
BEM~\cite{msw,mms,ms,hms,heuer,eh,effp,efgp,hypsing3d} and the FEM-BEM coupling~\cite{ms:fembem,gms,afkp:fembem}. In either case, $\rho_\ell(F)$ denotes some
weighted-residual error estimator, see~\cite{cs95,cs96,cc97,cms,cmps} for BEM
and~\cite{cs95:fembem,gms,affkmp:fembem} for the FEM-BEM coupling. 

\next
\textbf{Necessity of~(\ref{ass:stable}).}
In these cases, the weighted-residual error estimator $\rho_\ell$ imposes additional regularity assumptions on the 
given right-hand side $F$. For instance, the weighted-residual error estimator
for the weakly-singular integral equation~\cite{cs95,cs96,cc97,cms} 
requires $F\in H^1(\Gamma)$, while the natural space for the residual is 
$H^{1/2}(\Gamma)$, see Section~\ref{section:symm} for further details and 
discussions. Convergence~\eqref{eq:convergence} of Algorithm~\ref{algorithm} for arbitrary $F\in H^{1/2}(\Gamma)$ then follows
by means of stability~\eqref{ass:stable}.

\next
\textbf{Verification of~(\ref{ass:local})--(\ref{ass:contraction}).}
For two-level estimators, \eqref{ass:local} has first been observed
in~\cite{cf,cmps} for BEM and~\cite{afkp:fembem} for the FEM-BEM coupling and 
follows essentially from scaling arguments for the hierarchical basis functions. 
For the Faermann error estimator and a simplified 2D BEM setting,
\eqref{ass:local} is also proved in~\cite{cf}.
Finally, the novel observation~\eqref{ass:contraction} follows from an 
appropriately constructed mesh-size function and refinement of marked elements
as well as appropriate inverse-type 
estimates, where we shall build on the recent developments of~\cite{afembem};
see e.g.\ the proof of Theorem~\ref{thm:symm:twolevel}.

\next
\textbf{Verification of~(\ref{ass:stable}).}
Suppose that the operator $A$ is linear and $\mu_\ell(\cdot)$ is efficient
\begin{align}\label{eq:efficient} 
 \mu_\ell(F) \le \Ceff\,\norm{u(F)-U_\ell(F)}\HH
 \quad\text{for all }F\in\HH^*.
\end{align}
Provided $\mu_\ell(\cdot)$ has a semi-norm structure, the
corresponding triangle inequality yields
\begin{align}
\begin{split}\label{eq:18}
 \mu_\ell(F)
 \le \mu_\ell(F') + \mu_\ell(F-F')
 &\le \mu_\ell(F') + \Ceff\,\norm{u(F-F') - U_\ell(F-F')}\HH
 \\&
 \le \mu_\ell(F') + \Ceff\c{cea}\,\norm{u(F-F')}\HH
 \\&
 \le \mu_\ell(F') + \Ceff\c{cea}\,\norm{A^{-1}}{}\,\norm{F-F'}{\HH^*},
\end{split}
\end{align}
where $\norm{A^{-1}}{}$ denotes the operator norm of $A^{-1}$, and the
(bounded) inverse exists due to~\eqref{eq:cont}. This proves stability~\eqref{ass:stable}
with $\c{stable} = \Ceff\c{cea}\,\norm{A^{-1}}{}$.

\next
\textbf{Marking strategy.}
In view of optimal convergence rates, one usually asks for 
$\#\RR_\ell\lesssim\#\MM_\ell$ in~\eqref{ass:local}
and minimal cardinality of $\MM_\ell$ in~\eqref{eq:doerfler}. We stress, however, that 
this is not necessary for the present analysis, where our focus is on a
first plain convergence result.

\subsection{Abstract convergence analysis}
We start with the observation that~\eqref{ass:contraction}
already implies convergence of the auxiliary estimator $\rho_\ell$. We note
that the following lemma is, in particular, independent of the marking
strategy~\eqref{eq:doerfler}, i.e., we do not use any information about 
how the sequence $(\TT_\ell)_{\ell\in\N_0}$ is generated.

\begin{lemma}\label{prop:aux-convergence}
Suppose \eqref{ass:contraction} for some
fixed $F\in\HH^*$. Under nestedness $\XX_\ell\subseteq\XX_{\ell+1}$ of the 
discrete spaces for all $\ell\in\N_0$, the auxiliary estimator $\rho_\ell(F)$ 
converges, i.e, the limit
\begin{align}\label{gme:conv1}
 \rho_\infty(F) := \lim_{\ell\to\infty}\rho_\ell(F)
\end{align}
exists in $\R$. Moreover, it holds
\begin{align}\label{gme:conv2}
 \lim_{\ell\to\infty}\rho_\ell(F;\RR_\ell)=0.
\end{align}
\end{lemma}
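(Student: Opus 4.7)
The plan is to exploit \eqref{ass:contraction} together with the a~priori convergence of the Galerkin solutions. Since the discrete spaces are nested and~\eqref{eq:cea} holds by assumption, the classical Céa-type argument from~\cite{bv} (cf.~\eqref{eq:aprioriconv}) yields a limit $U_\infty(F)\in\XX_\infty$ with $\norm{U_\ell(F)-U_\infty(F)}\HH\to0$, so in particular the sequence $(U_\ell(F))_{\ell\in\N_0}$ is Cauchy, i.e., for every $\eps>0$ there exists $L\in\N_0$ such that $\norm{U_{\ell+k}(F)-U_\ell(F)}\HH\le\eps$ for all $\ell\ge L$ and all $k\in\N_0$.

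First I would establish that $(\rho_\ell(F))_{\ell\in\N_0}$ is bounded. Choosing $\delta=1$ in~\eqref{ass:contraction} and rearranging gives
\begin{align*}
 \rho_{\ell+k}(F)^2\le 2\rho_\ell(F)^2 + 4\c{inv}^2\,\norm{U_{\ell+k}(F)-U_\ell(F)}\HH^2,
\end{align*}
so taking $\ell=0$ and noting that $\norm{U_k(F)-U_0(F)}\HH$ is bounded uniformly in~$k$ shows $\sup_\ell\rho_\ell(F)<\infty$. Hence $\underline\rho:=\liminf_\ell\rho_\ell(F)$ and $\overline\rho:=\limsup_\ell\rho_\ell(F)$ both belong to~$\R$.

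Next I would combine the previous display with the Cauchy property: for any $\delta>0$ and $\eps>0$, there is $L$ such that for all $\ell\ge L$ and all $k\in\N_0$
\begin{align*}
 \rho_{\ell+k}(F)^2\le (1+\delta)\,\rho_\ell(F)^2 + (1+\delta)(1+\delta^{-1})\c{inv}^2\,\eps^2.
\end{align*}
Taking $\limsup_{k\to\infty}$ on the left and then $\liminf_{\ell\to\infty}$ on the right (both are legitimate since the inequality persists uniformly for $\ell\ge L$), I obtain
\begin{align*}
 \overline\rho^{\,2}\le (1+\delta)\,\underline\rho^{\,2} + (1+\delta)(1+\delta^{-1})\c{inv}^2\,\eps^2.
\end{align*}
Sending $\eps\to 0$ (which is legal since $L$ may be chosen depending on $\eps$ but the inequality holds for arbitrary $\eps>0$) followed by $\delta\to 0$ gives $\overline\rho^{\,2}\le\underline\rho^{\,2}$, and as $\underline\rho\le\overline\rho$ trivially holds, this proves existence of the limit $\rho_\infty(F):=\lim_\ell\rho_\ell(F)\in\R$ and hence~\eqref{gme:conv1}.

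Finally, to obtain~\eqref{gme:conv2} I would return to~\eqref{ass:contraction} and pass $k\to\infty$ for fixed $\ell$ and $\delta$, using $\rho_{\ell+k}(F)\to\rho_\infty(F)$ and $\norm{U_{\ell+k}(F)-U_\ell(F)}\HH\to\norm{U_\infty(F)-U_\ell(F)}\HH$. This yields
\begin{align*}
 \c{inv}^{-1}\rho_\ell(F;\RR_\ell)^2
 \le \rho_\ell(F)^2 - \frac{1}{1+\delta}\rho_\infty(F)^2
 + (1+\delta^{-1})\c{inv}\,\norm{U_\infty(F)-U_\ell(F)}\HH^2.
\end{align*}
Sending $\ell\to\infty$ gives $\c{inv}^{-1}\limsup_\ell\rho_\ell(F;\RR_\ell)^2\le \frac{\delta}{1+\delta}\rho_\infty(F)^2$, and letting $\delta\to 0$ concludes the proof. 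The main subtlety — really the only delicate point — is the order in which the quantifiers on $\delta,\eps,k,\ell$ are handled in step three, since the constants $(1+\delta)(1+\delta^{-1})\c{inv}^2$ blow up as $\delta\to 0$; the trick is to fix $\delta$, first let $\eps\to 0$ to eliminate the perturbation, and only then let $\delta\to 0$.
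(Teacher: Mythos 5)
Your argument is correct and follows essentially the same route as the paper: boundedness of $\rho_\ell(F)^2$ from~\eqref{ass:contraction} with $\delta=1$ and the a~priori convergence of the Galerkin solutions, then the quasi-monotonicity estimate obtained by combining the rearranged~\eqref{ass:contraction} with the Cauchy property of $(U_\ell(F))_\ell$ to force $\limsup=\liminf$ (the paper phrases this as uniqueness of accumulation points, which is the same mechanism), and finally~\eqref{gme:conv2} by passing to the limit in the rearranged inequality and letting $\delta\to0$. The only blemish is cosmetic: in your boundedness step the constant should be $4\c{inv}$ rather than $4\c{inv}^2$, which is immaterial to the argument.
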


\begin{proof}
First, we prove that~\eqref{ass:contraction} implies boundedness of 
$(\rho_\ell)_{\ell\in\N_0}$. 
We recall that nestedness $\XX_{\ell}\subseteq\XX_{\ell+1}$ for all $\ell\in\N_0$
in combination with the C\'ea lemma~\eqref{eq:cea} implies that the
limit $\lim_\ell U_\ell(F) =: U_\infty(F)$ exists in $\HH$, see 
e.g.~\cite{msv,cp,afp} or even the pioneering work~\cite{bv}. 
For $\ell=0$ and $\delta=1$, assumption~\eqref{ass:contraction} implies
\begin{align*}
 \frac12\,\rho_k(F)^2 \le \rho_0(F)^2 + 2\c{inv}\,\sup_{k\in\N_0}
 \norm{U_0-U_k}{\HH}^2 \le M < \infty.
\end{align*}
Next, we multiply ~\eqref{ass:contraction} by $(1+\delta)$ and observe
\begin{align}\label{eq:contraction2}
 \hspace*{-2mm}%
 0 \le 
 \rho_\ell(F;\RR_\ell)^2 
 \lesssim \rho_\ell(F)^2 - \rho_{\ell+k}(F)^2 + \delta \rho_\ell(F)^2
 + \c{inv}(\delta) \norm{U_{\ell+k}(F)\!-\!U_\ell(F)}\HH^2
\end{align}
with $\c{inv}(\delta):=(1+\delta)(1+\delta^{-1})\c{inv}=\delta^{-1}(1+\delta)^2\c{inv}$.
Let $\eps>0$. Because of the boundedness of $\rho_\ell(F)$, we can hence 
choose  $\delta>0$ and $\ell_0\in\N$ such that
$$
\delta \rho_\ell(F)^2 + \c{inv}(\delta)\,\norm{U_{\ell+k}(F)-U_\ell(F)}\HH^2 \le \eps
$$
for all $\ell\ge\ell_0$ and $k\in\N_0$.
Together with~\eqref{eq:contraction2}, this shows
\begin{align}\label{eq:help:cont}
\rho_\ell(F)^2 - \rho_{\ell+k}(F)^2 \ge -\eps.
\end{align}
Let $a,b\in\R$ be accumulation points of $(\rho_\ell(F)^2)_{\ell\in\N_0}$. First, choose $\ell\ge\ell_0$ and $k\in\N$ such that
$|\rho_\ell(F)^2-a|+|\rho_{\ell+k}(F)^2-b|\leq\eps$. With~\eqref{eq:help:cont}, this implies
\begin{align*}
 a-b\geq -3\eps.
\end{align*}
Second, choose $\ell\ge\ell_0$ and $k\in\N$ such that
$|\rho_\ell(F)^2-b|+|\rho_{\ell+k}(F)^2-a|\leq\eps$ to derive
\begin{align*}
 b-a\geq -3\eps.
\end{align*}
Since $\eps>0$ was arbitrary, the last two estimates imply $a=b$. Altogether, $(\rho_\ell(F)^2)_{\ell\in\N_0}$ is a bounded 
sequence in $\R$ with unique accumulation point. By elementary calculus,
$(\rho_\ell(F)^2)_{\ell\in\N_0}$ is convergent with limit $\rho_\infty(F)^2$.
Continuity of the square root concludes~\eqref{gme:conv1}.
In particular, this and~\eqref{eq:contraction2} prove 
$\rho_\ell(F;\RR_\ell)\to 0$ as $\ell\to\infty$.
\end{proof}

\begin{proposition}\label{prop:convergence}
Suppose assumptions~\eqref{ass:local}--\eqref{ass:contraction} for some
fixed $F\in\HH^*$. Under nestedness $\XX_\ell\subseteq\XX_{\ell+1}$ of the 
discrete spaces for all $\ell\in\N_0$ and due to the marking 
strategy~\eqref{eq:doerfler}, Algorithm~\ref{algorithm} guarantees estimator 
convergence $\displaystyle\lim_{\ell\to\infty}\mu_\ell(F)=0$.
\end{proposition}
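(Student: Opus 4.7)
The plan is to combine the preceding Lemma~\ref{prop:aux-convergence} with the local lower bound~\eqref{ass:local} and the D\"orfler marking criterion~\eqref{eq:doerfler}. This reduces the convergence of $\mu_\ell(F)$ to the already-established convergence $\rho_\ell(F;\RR_\ell)\to 0$, so no further structural properties of the estimator are needed beyond those supplied in the abstract framework.

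In more detail, I would first invoke Lemma~\ref{prop:aux-convergence}: since~\eqref{ass:contraction} is assumed and the discrete spaces are nested, we obtain
\begin{align*}
 \lim_{\ell\to\infty}\rho_\ell(F;\RR_\ell) = 0.
\end{align*}
Next, for every $\ell\in\N_0$, assumption~\eqref{ass:local} together with the inclusion $\MM_\ell\subseteq\RR_\ell$ gives $\mu_\ell(F;\MM_\ell)\le \c{local}\,\rho_\ell(F;\RR_\ell)$, and the D\"orfler marking~\eqref{eq:doerfler} ensures $\theta\,\mu_\ell(F)^2\le\mu_\ell(F;\MM_\ell)^2$. Chaining these two estimates yields
\begin{align*}
 \mu_\ell(F)
 \le \theta^{-1/2}\,\mu_\ell(F;\MM_\ell)
 \le \theta^{-1/2}\,\c{local}\,\rho_\ell(F;\RR_\ell),
\end{align*}
and passing to the limit $\ell\to\infty$ gives the desired conclusion $\mu_\ell(F)\to0$.

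There is essentially no obstacle here because the heavy lifting has already been done in Lemma~\ref{prop:aux-convergence}, where a Cauchy-type argument was combined with the a~priori convergence $U_\ell(F)\to U_\infty(F)$ (a consequence of nestedness and the C\'ea estimate~\eqref{eq:cea}) to produce the key limit $\rho_\ell(F;\RR_\ell)\to0$. The only conceptual point worth underlining in the write-up is that assumption~\eqref{ass:stable} is \emph{not} used at this stage: it enters only when $\rho_\ell(\cdot)$ is merely defined on a dense subset $D\subseteq\HH^*$, which is the scenario handled separately in Proposition~\ref{prop:new}. Here, $F\in\HH^*$ is fixed and $\rho_\ell(F)$ is assumed to be well-defined, so the chain of inequalities above suffices.
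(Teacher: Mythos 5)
Your argument is correct and coincides with the paper's own proof: both combine the D\"orfler marking~\eqref{eq:doerfler} with the local lower bound~\eqref{ass:local} to get $\theta\,\mu_\ell(F)^2\le\mu_\ell(F;\MM_\ell)^2\lesssim\rho_\ell(F;\RR_\ell)^2$ and then invoke Lemma~\ref{prop:aux-convergence}. Your remark that~\eqref{ass:stable} is not needed here is also consistent with the paper's structure, where it enters only in Proposition~\ref{prop:new}.
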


\begin{proof}
The marking criterion~\eqref{eq:doerfler} and assumption~\eqref{ass:local} show
$$\theta\mu_{\ell}(F)^2 \le \mu_{\ell}(F;\MM_{\ell})^2 \lesssim \rho_{\ell}(F;\RR_{\ell})^2.$$
Hence, the assertion $\mu_{\ell}(F)\xrightarrow{\ell\to\infty} 0$ follows from 
Lemma~\ref{prop:aux-convergence}.
\end{proof}

\begin{proposition}\label{prop:new}
Suppose that $D\subseteq\HH^*$ is a dense subset of $\HH^*$ such that 
assumptions~\eqref{ass:local}--\eqref{ass:contraction} are satisfied for all
$F\in D$. In addition, suppose validity of~\eqref{ass:stable}.
Under nestedness $\XX_\ell\subseteq\XX_{\ell+1}$ of the 
discrete spaces for all $\ell\in\N_0$ and due to the marking 
strategy~\eqref{eq:doerfler}, Algorithm~\ref{algorithm} guarantees  
convergence $\displaystyle\lim_{\ell\to\infty}\mu_\ell(F)=0$ for all $F\in\HH^*$.
\end{proposition}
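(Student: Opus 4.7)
The plan is to reduce to the already-established case $F\in D$ via a density-and-stability argument, with Proposition~\ref{prop:convergence} serving as the workhorse and~\eqref{ass:stable} as the bridge. Given an arbitrary $F\in\HH^*$ and $\eps>0$, I would fix some $F'\in D$ with $\norm{F-F'}{\HH^*}<\eps$, keeping in mind that the sequences $(\TT_\ell)$, $(\MM_\ell)$, $(\XX_\ell)$, and $(U_\ell(F))$ are the ones generated by Algorithm~\ref{algorithm} driven by $F$, not by $F'$. Alongside these, I would work with the \emph{shadow} Galerkin solutions $U_\ell(F')\in\XX_\ell$ and the corresponding shadow estimators $\rho_\ell(F')$ and $\mu_\ell(F';\MM_\ell)$ computed on the very same discrete spaces.

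The first step is to show that $\mu_\ell(F';\MM_\ell)\to 0$ along this shadow sequence. Because $(\XX_\ell)$ is nested, the C\'ea estimate~\eqref{eq:cea} already implies $U_\ell(F')\to U_\infty(F')$ in $\HH$. The proof of Lemma~\ref{prop:aux-convergence} only uses~\eqref{ass:contraction} together with nestedness and the convergence of the Galerkin solutions, so it applies verbatim to $F'\in D$ on the shadow sequence and yields $\rho_\ell(F';\RR_\ell)\to 0$. Assumption~\eqref{ass:local} for $F'$ then gives $\mu_\ell(F';\MM_\ell)\le\c{local}\,\rho_\ell(F';\RR_\ell)\to 0$.

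The second step combines the D\"orfler criterion~\eqref{eq:doerfler} (which is enforced for $F$) with stability~\eqref{ass:stable} on the marked set $\MM_\ell$ to transfer this convergence from $F'$ back to $F$:
\begin{align*}
 \sqrt{\theta}\,\mu_\ell(F)
 \le \mu_\ell(F;\MM_\ell)
 \le \mu_\ell(F';\MM_\ell) + \c{stable}\,\norm{F-F'}{\HH^*}
 \le \mu_\ell(F';\MM_\ell) + \c{stable}\,\eps.
\end{align*}
Taking $\limsup_{\ell\to\infty}$ and invoking Step~1, I obtain $\limsup_\ell \mu_\ell(F)\le \c{stable}\,\eps/\sqrt{\theta}$. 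Since $\eps>0$ was arbitrary, this yields $\mu_\ell(F)\to 0$ as claimed.

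The main conceptual point to check---less a deep obstacle than a subtlety---is the legitimacy of invoking~\eqref{ass:local} and~\eqref{ass:contraction} for $F'$ along the shadow sequence produced by the $F$-driven algorithm. For this step to go through, these hypotheses must be read as structural properties of the pair $(\mu_\ell,\rho_\ell)$ on any nested family of discrete spaces and any subsets $\MM_\ell\subseteq\RR_\ell\subseteq\TT_\ell$, rather than as properties tied to a marking adapted to one specific right-hand side; in the concrete applications of Sections~\ref{section:symm}--\ref{section:fembem}, this is precisely how the assumptions are verified, so the argument is not circular.
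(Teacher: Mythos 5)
Your proposal is correct and follows essentially the same route as the paper: pick $F'\in D$ with $\norm{F-F'}{\HH^*}\le\eps$, combine the D\"orfler marking~\eqref{eq:doerfler} with stability~\eqref{ass:stable} and the local lower bound~\eqref{ass:local} for $F'$, invoke Lemma~\ref{prop:aux-convergence} (whose hypotheses hold for $F'$ on the sequence generated by the $F$-driven algorithm) to get $\rho_\ell(F';\RR_\ell)\to0$, and let $\eps\to0$. Your closing remark on reading~\eqref{ass:local}--\eqref{ass:contraction} as structural properties valid for any admissible right-hand side along the generated meshes is exactly the (implicit) reading the paper uses, and your $\sqrt{\theta}$ is in fact slightly more precise than the paper's $\theta$ in the displayed estimate.
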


\begin{proof}
Let $\eps>0$ and choose $F^\prime\in D$ such that $\norm{F-F^\prime}{\HH^*}\leq \eps$. The marking criterion~\eqref{eq:doerfler} 
as well as~\eqref{ass:stable} and~\eqref{ass:local} show
$$ \theta\,\mu_\ell(F) \le \mu_\ell(F;\MM_\ell) \lesssim \mu_\ell(F';\MM_\ell) + \norm{F-F'}{\HH^*} \lesssim \rho_\ell(F';\RR_\ell) + \eps.$$
Lemma~\ref{prop:aux-convergence} yields $\rho_\ell(F';\RR_\ell)\xrightarrow{\ell\to\infty} 0$, whence
\begin{align*}
 \theta\,\limsup_{\ell\to\infty} \mu_\ell(F)\lesssim \eps.
\end{align*}
With $\eps\to 0 $, elementary calculus concludes the proof.
\end{proof}

\def\supp{{\rm supp}}
\section{Weakly-singular integral equation}
\label{section:symm}

\subsection{Model problem}
We consider the weakly-singular integral equation
\begin{align}\label{eq:symm}
 Au(x) = \int_\Gamma G(x-y)\,u(y)\,d\Gamma(y) = F(x)
 \quad\text{for all }x\in\Gamma
\end{align}
on a relatively open, polygonal part $\Gamma\subseteq\partial\Omega$ of the boundary of a 
bounded, polyhedral Lipschitz domain $\Omega\subset\R^d$, $d=2,3$. For $d=3$, we assume that the boundary of $\Gamma$ (a polygonal curve) is Lipschitz itself.
Here, 
\begin{align}\label{eq:G}
 G(z) = -\frac{1}{2\pi}\log|z|
 \quad\text{resp.}\quad
 G(z) = \frac{1}{4\pi}|z|^{-1}
\end{align} 
denotes the fundamental solution of the Laplacian in $d=2,3$. 
The reader is referred to, e.g., the monographs~\cite{hw,mclean,ss,steinbach} for
proofs of and details on the following facts:
The simple-layer integral operator
$A:\HH\to\HH^*$ is a continuous linear operator between the fractional-order Sobolev
space $\HH=\H^{-1/2}(\Gamma)$ and its dual $\HH^*=H^{1/2}(\Gamma):=\set{\widehat v|_\Gamma}{\widehat v\in H^1(\Omega)}$. Duality is 
understood with respect to the extended $L^2(\Gamma)$-scalar product $\dual\cdot\cdot$. 
In 2D, we additionally assume
$\diam(\Omega)<1$ which can always be achieved by scaling. Then, the simple-layer
integral operator is also elliptic
\begin{align}
 \dual{v}{Av} \ge \c{elliptic}\,\norm{v}{\H^{-1/2}(\Gamma)}^2
 \quad\text{for all }v\in\HH=\H^{-1/2}(\Gamma)
\end{align}
with some constant $\setc{elliptic}>0$ which depends only on $\Gamma$.
Thus, $A$ meets all assumptions of Section~\ref{section:abstract}, 
and $\norm{v}A^2:=\dual{Av}{v}$ even defines an equivalent Hilbert norm
on $\HH$.

\subsection{Discretization}
\label{section:discretization}
Let $\TT_\star$ be a $\gamma$-shape regular triangulation of $\Gamma$
into  affine line segments for $d=2$ resp.\ plane surface triangles for $d=3$.
For $d=3$, $\gamma$-shape regularity means
\begin{subequations}\label{eq:shaperegular}
\begin{align}
 \sup_{T\in\TT_\star}\frac{\diam(T)^2}{|T|} \le \gamma < \infty
\end{align}
with $|\cdot|$ being the two-dimensional surface measure, whereas
for $d=2$, we impose uniform boundedness of the local mesh-ratio
\begin{align}
 \frac{\diam(T)}{\diam(T')} \le \gamma < \infty
 \quad\text{for all }T,T'\in\TT_\star\text{ with }T\cap T'\neq\emptyset.
\end{align}
\end{subequations}
To abbreviate notation, we shall write $|T|:=\diam(T)$ for $d=2$.
In addition, we assume that $\TT_\star$ is regular in the sense of Ciarlet
for $d=3$, i.e., there are no hanging nodes.

With $\XX_\star=\PP^0(\TT_\star)$ being the space of $\TT_\star$-piecewise
constant functions, we now consider the Galerkin formulation~\eqref{eq:galerkin}.

\subsection{Weighted-residual error estimator}
According to the Galerkin formulation~\eqref{eq:galerkin}, the
residual $F-AU_\star(F)\in H^{1/2}(\Gamma)$ has $\TT_\star$-piecewise
integral mean zero, i.e.,
\begin{align}\label{eq:symm:mean:zero}
 \int_T (F-AU_\star(F))\,d\Gamma = 0
 \quad\text{for all }T\in\TT_\star.
\end{align}
Suppose for the moment that the right-hand side has additional regularity
$F\in H^1(\Gamma)\subset H^{1/2}(\Gamma)$.
Since $A:\H^{-1/2}(\Gamma)\to H^{1/2}(\Gamma)$ is an isomorphism with additional stability $A:\H^{-1/2+s}(\Gamma)\to H^{1/2+s}(\Gamma)$ for all $-1/2\le s\le 1/2$ (We note that $A$ is \emph{not} isomorphic for $s=\pm1$ and $\Gamma\subsetneqq\partial\Omega$.),
a Poincar\'e-type inequality in $H^{1/2}(\Gamma)$ shows
\begin{align}\label{eq:resmean}
\begin{split}
 \norm{u(F)-U_\star(F)}{\H^{-1/2}(\Gamma)}
 \simeq \norm{F-AU_\star(F)}{H^{1/2}(\Gamma)}
 &\lesssim \norm{h_\star^{1/2}\nabla_\Gamma(F-AU_\star(F))}{L^2(\Gamma)}
 \\&=:\eta_\star(F),
\end{split}
\end{align}
see~\cite{cs95,cs96,cc97,cms}.
Here, $\nabla_\Gamma(\cdot)$ denotes the surface gradient, and
$h_\star\in\PP^0(\TT_\star)$ is the local mesh-width function defined 
pointwise almost everywhere by $h_\star|_T := \diam(T)$ for all $T\in\TT_\star$.
Overall, this proves the reliability estimate
\begin{align}
 \norm{u(F)-U_\star(F)}{\H^{-1/2}(\Gamma)}
 \le \CCrel\,\eta_\star(F),
\end{align}
and the constant $\CCrel>0$ depends only on $\Gamma$
and the $\gamma$-shape regularity~\eqref{eq:shaperegular} of $\TT_\star$; see~\cite{cms}. In 2D, it holds that
$\CCrel = C\,\log^{1/2}(1+\gamma)$, where $C>0$ depends only on $\Gamma$; see~\cite{cc97}.
In particular, the weighted-residual error estimator can be localized via
\begin{align}\label{eq:symm:resest}
 \eta_\star(F) = \Big(\sum_{T\in\TT_\star}\eta_\star(F;T)^2\Big)^{1/2}
 \text{ with }
 \eta_\star(F;T) = \diam(T)^{1/2}\norm{\nabla_\Gamma(F-AU_\star(F))}{L^2(T)}.
\end{align} 
Recently, convergence of Algorithm~\ref{algorithm} has been shown 
even with quasi-optimal rates, if $\eta_\ell(F)=\mu_\ell(F)$ is used for 
marking~\eqref{eq:doerfler}; see~\cite{abem,ffkmp:part1}. We stress that our approach
with $\eta_\ell(F) = \rho_\ell(F) = \mu_\ell(F)$ would also give convergence 
$\eta_\ell(F) \to 0$ as $\ell\to\infty$. Since this is, however, a much weaker
result than that of~\cite{abem}, we omit the details.

\begin{figure}[t]
\psfrag{+1}[c][c]{\footnotesize$+1$}
\psfrag{-1}[c][c]{\footnotesize$-1$}
\def\fig#1#2{\begin{minipage}{3cm}\centering\includegraphics[width=\textwidth]{#1.eps}\\#2\end{minipage}}
\fig{triangle0}{$\chi_T$}\quad
\fig{triangle1}{$\varphi_{T,1}$}\quad
\fig{triangle2}{$\varphi_{T,2}$}\quad
\fig{triangle3}{$\varphi_{T,3}$}\quad
\caption{For $d=3$, uniform bisection-based mesh-refinement usually splits
a coarse mesh element $T\in\TT_\ell$ (left) into four sons $T'\in\widehat\TT_{\ell}$
(right) so that $|T|/4 = |T'|$. 
Typical hierarchical basis functions $\varphi_{T,j}$ are indicated by their piecewise constant 
values $\pm1$ on the son elements $T'$.}
\label{fig:twolevel:3D}
\end{figure}
\subsection{Two-level error estimator}
In the frame of weakly-singular integral equations~\eqref{eq:symm}, the
two-level error estimator was introduced in~\cite{msw}.
Let $\widehat\TT_\star$ denote the uniform refinement of $\TT_\star$.
For each element $T\in\TT_\star$, let 
$\widehat\TT_\star|_T:=\set{T'\in\widehat\TT_\star}{T'\subset T}$ denote the 
set of sons of $T$. Let $\{\chi_T,\varphi_{T,1},\dots,\varphi_{T,D}\}$
be a basis of $\PP^0(\widehat\TT_\star|_T)$ with fine-mesh functions 
$\varphi_{T,j}$ which satisfy $\supp(\varphi_{T,j})\subseteq T$ and
$\int_T\varphi_{T,j}\,d\Gamma=0$. We note 
that usually $D=1$ for $d=2$ and $D=3$ for $d=3$. Typical choices are shown in 
Figure~\ref{fig:twolevel:3D}. Then, the 
local contributions of the two-level error estimator from~\cite{msw,mms,hms,eh,effp} read
\begin{align}\label{eq:symm:twolevel}
\mu_\star(F;T)^2 = \sum_{j=1}^D\mu_{\star,j}(F;T)^2
\quad\text{with}\quad
\mu_{\star,j}(F;T) = \frac{\dual{F-AU_\star(F)}{\varphi_{T,j}}}{\dual{A\varphi_{T,j}}{\varphi_{T,j}}^{1/2}}.
\end{align}
Put differently, we test the residual $F-AU_\star(F)\in H^{1/2}(\Gamma)$ 
with the additional basis functions from 
$\PP^0(\widehat\TT_\star)\backslash\PP^0(\TT_\star)$.
This quantity is appropriately scaled by the corresponding energy norm 
$\norm{\varphi}{\H^{-1/2}(\Gamma)}\simeq\dual{A\varphi}{\varphi}^{1/2}=\norm{\varphi}A$.
Note that unlike the weighted-residual error estimator $\eta_\star(\cdot)$
from~\eqref{eq:symm:resest},  the two-level
error estimator $\mu_\star(F)$ is well-defined under minimal regularity 
$F\in H^{1/2}(\Gamma)$ of the given right-hand side.

The two-level estimator is known to be efficient~\cite{msw,mms,hms,eh,effp}
\begin{align}\label{eq:twolevel:efficient}
 \mu_\star(F) \le \Ceff\,\norm{u(F)-U_\star(F)}{\H^{-1/2}(\Gamma)},
\end{align}
while reliability
\begin{align}\label{eq:twolevel:reliable}
 \norm{u(F)-U_\star(F)}{\H^{-1/2}(\Gamma)} \le \Crel\,\mu_\star(F) 
\end{align}
holds under~\cite{msw,mms,hms,eh} and is even equivalent to~\cite{effp} the saturation assumption
\begin{align}
\norm{u(F)-\widehat U_\star(F)}A \le q_{\rm sat}\,\norm{u(F)-U_\star(F)}A
\end{align}
in the energy norm $\norm\cdot{A}\simeq\norm\cdot{\H^{-1/2}(\Gamma)}$.
Here, $0<q_{\rm sat}<1$ is a uniform constant, and $\widehat U_\star(F)$ is the Galerkin
solution with respect to the uniform refinement $\widehat\TT_\star$ of $\TT_\star$.
The constant $\Ceff>0$ depends only on $\Gamma$ and $\gamma$-shape regularity of
$\TT_\star$, while $\Crel>0$ additionally depends on the saturation constant 
$q_{\rm sat}$.

With the help of Proposition~\ref{prop:convergence} and Proposition~\ref{prop:new}, we aim to prove the
following convergence result for the related adaptive mesh-refining algorithm.
Recall that for $d=3$, refinement of an element $T\in\TT_\ell$ does not 
necessarily imply that $\diam(T') < \diam(T)$ for the sons $T'\in\TT_{\ell+1}$
of $T$. However, it is reasonable to assume that each marked element 
$T\in\MM_\ell$ is refined into at least two sons $T'\in\TT_{\ell+1}$ which 
satisfy $|T'|\le \kappa\,|T|$ with some uniform $0<\kappa<1$ (and $\kappa=1/2$
for usual mesh-refinement strategies for $d=2,3$).

\begin{theorem}\label{thm:symm:twolevel}
Suppose that the two-level error estimator~\eqref{eq:symm:twolevel} is used
for marking~\eqref{eq:doerfler}. Suppose that the mesh-refinement
guarantees uniform $\gamma$-shape regularity~\eqref{eq:shaperegular} of the\linebreak meshes $\TT_\ell$ generated,
as well as that all marked elements $T\in\MM_\ell$ are refined into sons 
$T'\in\TT_{\ell+1}$ with $|T'|\le \kappa\,|T|$ with some uniform constant $0<\kappa<1$.
Then, Algorithm~\ref{algorithm} guarantees 
\begin{align}\label{symm:convergence}
 \mu_\ell(F)\to0 
 \quad\text{as }\ell\to\infty
\end{align}
for all $F\in H^{1/2}(\Gamma)$.
\end{theorem}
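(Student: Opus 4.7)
The plan is to apply Proposition~\ref{prop:new} with the weighted-residual estimator $\rho_\ell(F) := \eta_\ell(F)$ from~\eqref{eq:symm:resest} as the auxiliary estimator, the dense subset $D := H^1(\Gamma) \subseteq H^{1/2}(\Gamma) = \HH^*$, and $\RR_\ell := \MM_\ell$. Three points need verification.

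For the local lower bound~\eqref{ass:local}, I would exploit the zero-mean property $\int_T \varphi_{T,j}\,d\Gamma = 0$ of the hierarchical basis functions: after subtracting the elementwise mean of $F-AU_\ell(F)$ and applying Cauchy--Schwarz, a Poincar\'e inequality on $T$ gives $|\dual{F-AU_\ell(F)}{\varphi_{T,j}}| \lesssim \diam(T)\,\norm{\nabla_\Gamma(F-AU_\ell(F))}{L^2(T)}\,\norm{\varphi_{T,j}}{L^2(T)}$, while the standard scaling $\dual{A\varphi_{T,j}}{\varphi_{T,j}}^{1/2} \gtrsim \diam(T)^{1/2}\,\norm{\varphi_{T,j}}{L^2(T)}$ for hierarchical basis functions (see e.g.~\cite{effp}) cancels the $L^2$-factor and leaves $\mu_{\ell,j}(F;T) \lesssim \eta_\ell(F;T)$ elementwise.

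The core step is~\eqref{ass:contraction}, which I expect to be the main obstacle. The strategy is to decompose $F - AU_{\ell+k}(F) = (F-AU_\ell(F)) - A(U_{\ell+k}(F) - U_\ell(F))$, apply Young's inequality with parameter $\delta>0$ pointwise to the weighted gradient, and estimate the two contributions separately. On unrefined elements $h_{\ell+k} = h_\ell$, while on refined elements the assumption $|T'| \le \kappa|T|$ combined with $\gamma$-shape regularity~\eqref{eq:shaperegular} gives $h_{\ell+k}|_{T'} \le \kappa' h_\ell|_T$ for some $\kappa' \in (0,1)$; hence the first contribution is at most $\eta_\ell(F)^2 - (1-\kappa')\,\eta_\ell(F;\MM_\ell)^2$. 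The second contribution is controlled by the inverse-type estimate $\norm{h_{\ell+k}^{1/2}\nabla_\Gamma A w}{L^2(\Gamma)} \lesssim \norm{w}{\H^{-1/2}(\Gamma)}$ for $w = U_{\ell+k}(F) - U_\ell(F) \in \PP^0(\TT_{\ell+k})$, which is the novel ingredient and is borrowed from the recent work~\cite{afembem}. Rearranging yields~\eqref{ass:contraction} with $\RR_\ell = \MM_\ell$.

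Finally,~\eqref{ass:stable} comes essentially for free: linearity of $A$ and of $F \mapsto U_\ell(F)$ makes $F \mapsto \mu_\ell(F;\MM_\ell)$ a seminorm on $\HH^*$, so the unconditional efficiency~\eqref{eq:twolevel:efficient} together with the computation~\eqref{eq:18} already performed in the abstract framework yields $\c{stable} = \Ceff\,\c{cea}\,\norm{A^{-1}}{}$. Proposition~\ref{prop:new} then concludes $\mu_\ell(F) \to 0$ for all $F \in H^{1/2}(\Gamma)$.
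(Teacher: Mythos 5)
Your strategy is the same as the paper's: Proposition~\ref{prop:new} with the dense subset $D=H^1(\Gamma)$, a weighted-residual auxiliary estimator with $\RR_\ell=\MM_\ell$, the scaling/Poincar\'e argument for \eqref{ass:local}, a telescoping-plus-Young argument combined with the inverse estimate of \cite{afembem} for \eqref{ass:contraction}, and efficiency~\eqref{eq:twolevel:efficient} together with~\eqref{eq:18} for \eqref{ass:stable}.

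There is, however, one genuine gap, and it sits exactly at the point the paper warns about just before the theorem: for $d=3$ your choice $\rho_\ell=\eta_\ell$ with the diameter-based weight $h_\ell|_T=\diam(T)$ does \emph{not} satisfy the contraction you claim. The hypothesis is $|T'|\le\kappa\,|T|$ for the \emph{surface measure}, and $\gamma$-shape regularity only gives $\diam(T')^2\le\gamma\,|T'|\le\gamma\kappa\,|T|\le\gamma\kappa\,\diam(T)^2$, so the best available factor is $(\gamma\kappa)^{1/2}$, which in general exceeds $1$ (e.g.\ $\kappa=1/2$ and moderate $\gamma$). Indeed, a single bisection of a shape-regular triangle halves the area but can leave the diameter unchanged (one son retains the longest edge whenever the refinement edge is not the longest), so no uniform $\kappa'<1$ with $h_{\ell+1}|_{T'}\le\kappa'\,h_\ell|_T$ on marked elements exists, and your bound ``first contribution $\le\eta_\ell(F)^2-(1-\kappa')\,\eta_\ell(F;\MM_\ell)^2$'' breaks down. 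The paper circumvents this by working with the equivalent mesh-width $\widetilde h_\ell|_T=|T|^{1/(d-1)}$, i.e.\ the estimator $\rho_\ell$ of~\eqref{eq:symm:equivalence}: this weight is pointwise non-increasing under refinement and contracts by $q=\kappa^{1/(d-1)}<1$ on every refined element, while the equivalence $\rho_\ell(F;T)\simeq\eta_\ell(F;T)$ transports your verification of \eqref{ass:local} (which is correct as written, being the same Poincar\'e-plus-inverse-estimate scaling as in the paper) without change. With this single substitution the remainder of your argument --- Young's inequality with parameter $\delta$, the inverse estimate~\eqref{eq:novel:invest} applied on $\TT_{\ell+k}$ to $U_{\ell+k}(F)-U_\ell(F)\in\PP^0(\TT_{\ell+k})$, and stability via~\eqref{eq:18} --- is precisely the paper's proof; for $d=2$, where $|T|=\diam(T)$ by convention, your version is already correct as stated.
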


The claim of Theorem~\ref{thm:symm:twolevel} 
follows from 
Proposition~\ref{prop:new}
as soon as we have verified the abstract assumptions~\eqref{ass:local}--\eqref{ass:stable}.
We will show~\eqref{ass:local}--\eqref{ass:contraction} for a slight variant $\rho_\star(\cdot)$
of the weighted-residual error estimator $\eta_\star(\cdot)$ from~\eqref{eq:symm:resest} and for all right-hand sides 
$F\in H^1(\Gamma)$. Afterward, assumption
\eqref{ass:stable} is shown for all $F\in H^{1/2}(\Gamma)$, and the
final claim then follows from density of $H^1(\Gamma)$ within $H^{1/2}(\Gamma)$.

\begin{proof}[Proof of 
Theorem~\ref{thm:symm:twolevel}]
For given right-hand side $F\in H^1(\Gamma)$, the weighted-residual error estimator 
$\eta_\star(F)$ from~\eqref{eq:symm:resest} is well-defined.

\next 
Note that $\gamma$-shape regularity~\eqref{eq:shaperegular} implies for $d=3$ the pointwise equivalence
\begin{align}
 \c{meshsize}^{-1}\diam(T)
 \le |T|^{1/2} \le \diam(T)
 \quad\text{for all }T\in\TT_\ell,
\end{align}
where $\setc{meshsize}=\sqrt{\gamma}>0$. In the spirit of~\cite{ckns},
we hence use the modified mesh-width function 
$\widetilde h_\ell\in\PP^0(\TT_\ell)$ defined pointwise almost everywhere
by $\widetilde h_\ell|_T = |T|^{1/(d-1)}$
and note that $\widetilde h_\ell = h_\ell$ for $d=2$. 
Then, we consider an equivalent weighted-residual error estimator $\rho_\ell(F)$ given by
\begin{align}\label{eq:symm:equivalence}
 \c{meshsize}^{-1/2}\,\eta_\ell(F;T)
 \le \rho_\ell(F;T) 
 := \norm{\widetilde h_\ell^{1/2}\nabla_\Gamma(F-AU_\ell(F))}{L^2(T)}
 \le \eta_\ell(F;T)
\end{align}

\next 
It has first been noted in~\cite[Theorem~8.1]{cf} for 2D that
\begin{align}\label{eq:symm:local}
 \mu_{\ell,j}(F;T) 
 \le\c{locall} \eta_\ell(F;T)
 \quad\text{for all }T\in\TT_\ell,
\end{align}
where the constant $\setc{locall}>0$ depends only on $\gamma$-shape regularity
of $\TT_\ell$, and the proof transfers to 3D as well. For completeness, we 
include the short argument: With $\supp(\varphi_{T,j})\subseteq T$, we infer
\begin{align}\label{du1}
 \mu_{\ell,j}(F;T) = \frac{\dual{F-AU_\ell(F)}{\varphi_{T,j}}}{\norm{\varphi_{T,j}}{A}}
 \le \norm{h_\ell^{-1/2}(F-AU_\ell(F))}{L^2(T)}
 \, \frac{\norm{h_\ell^{1/2}\varphi_{T,j}}{L^2(T)}}{\norm{\varphi_{T,j}}{A}}.
\end{align}
With the inverse estimate from~\cite[Theorem~3.6]{ghs} and norm equivalence, we obtain
\begin{align*}
 \norm{h_\ell^{1/2}\varphi_{T,j}}{L^2(T)}
 = \norm{h_\ell^{1/2}\varphi_{T,j}}{L^2(\Gamma)}
 \lesssim \norm{\varphi_{T,j}}{\H^{-1/2}(\Gamma)}
 \simeq \norm{\varphi_{T,j}}{A},
\end{align*}
where the hidden constants depend only on $\Gamma$ and $\gamma$-shape regularity~\eqref{eq:shaperegular} of $\TT_\ell$. We note that the assumption $\int_T\varphi_{T,j}\,d\Gamma=0$
together with the approximation result of~\cite[Theorem~4.1]{ccdpr:symm} also proves
the converse estimate
\begin{align*}
 \norm{\varphi_{T,j}}{A}
 \simeq\norm{\varphi_{T,j}}{\H^{-1/2}(\Gamma)}
 \lesssim \norm{h_\ell^{1/2}\varphi_{T,j}}{L^2(T)},
\end{align*}
where the hidden constant depends only on $\Gamma$. This proves that the quotient
on the right-hand side of~\eqref{du1} remains bounded. Due to~\eqref{eq:symm:mean:zero}, the Poincar\'e estimate yields $\norm{h_\ell^{-1/2}(F-AU_\ell(F))}{L^2(T)}\lesssim \norm{h_\ell^{1/2}\nabla_\Gamma(F-AU_\ell(F)}{L^2(T)}$. This concludes~\eqref{eq:symm:local}.
Together with~\eqref{eq:symm:equivalence}, this 
proves~\eqref{ass:local} with $\c{local}=\c{locall}\c{meshsize}^{1/2}D^{1/2}$
and $\RR_\ell = \MM_\ell$.


\next 
The verification of~\eqref{ass:contraction} hinges on the use of the 
equivalent mesh-size function. Note that each marked element $T\in\MM_\ell = \RR_\ell$ 
is refined and that the mesh-size sequence is pointwise decreasing. With 
$q=\kappa^{1/(d-1)}$, this implies the pointwise estimate
\begin{align*}
 \widetilde h_\ell - \widetilde h_{\ell+k}
 \ge \widetilde h_\ell - \widetilde h_{\ell+1}
 \ge (1-q)\, \widetilde h_\ell \chi_{\bigcup\RR_\ell}
 \quad\text{for all }\ell,k\in\N,
\end{align*}
where $\chi_{\bigcup\RR_\ell}$ denotes the characteristic function of the
set $\bigcup\RR_\ell:=\bigcup_{T\in\RR_\ell}T$. Hence,
the estimator $\rho_\ell(\cdot)$ from~\eqref{eq:symm:equivalence} satisfies
\begin{align*}
 (1-q)\,\rho_\ell(F;\RR_\ell)^2
 &= (1-q)\int_{\bigcup\RR_\ell}\widetilde h_\ell|\nabla_\Gamma(F-AU_\ell(F))|^2\,d\Gamma
 \\&
 \le \int_{\Gamma}\widetilde h_\ell|\nabla_\Gamma(F-AU_\ell(F))|^2\,d\Gamma
- \int_{\Gamma}\widetilde h_{\ell+k}|\nabla_\Gamma(F-AU_\ell(F))|^2\,d\Gamma
 \\&
 = \norm{\widetilde h_\ell^{1/2}\nabla_\Gamma(F-AU_\ell(F))}{L^2(\Gamma)}^2
 - \norm{\widetilde h_{\ell+k}^{1/2}\nabla_\Gamma(F-AU_\ell(F))}{L^2(\Gamma)}^2.
\end{align*}
For arbitrary $a,b\ge0$ and $\delta>0$, the Young inequality gives
$(a+b)^2 \le (1+\delta)a^2 + (1+\delta^{-1})b^2$ and hence
$a^2 \ge (1+\delta)^{-1}\big((a+b)^2-(1+\delta^{-1})b^2\big)$. Together with
the triangle inequality, this leads us to
\begin{align*}
 (1-q)\,\rho_\ell(F;\RR_\ell)^2
 \le &\rho_\ell(F)^2 - \frac{1}{1+\delta}\,\rho_{\ell+k}(F)^2
 \\&+ \frac{1+\delta^{-1}}{1+\delta}\,
 \norm{\widetilde h_{\ell+k}^{1/2}\nabla_\Gamma A(U_\ell(F)-U_{\ell+k}(F))}{L^2(\Gamma)}^2.
\end{align*}
Finally, we use an inverse estimate from~\cite[Corollary~3]{afembem} 
\begin{align}\label{eq:novel:invest}
 \norm{h_\ell^{1/2}\nabla_\Gamma AV_\ell}{L^2(\Gamma)}
 \le \Cinv\,\norm{V_\ell}{\H^{-1/2}(\Gamma)}\quad\text{for all }V_\ell\in\PP^0(\TT_\ell).
\end{align}
With this, we derive
\begin{align*}
 (1-q)\,\rho_\ell(F;\RR_\ell)^2
 \le \rho_\ell(F)^2 - \frac{1}{1+\delta}\,\rho_{\ell+k}(F)^2
 + \frac{1+\delta^{-1}}{1+\delta}\,\Cinv\,
 \norm{U_\ell(F)-U_{\ell+k}(F)}{\H^{-1/2}(\Gamma)}^2.
\end{align*}
This proves Assumption~\eqref{ass:contraction} with 
$\c{inv} = \max\{\Cinv,(1-q)^{-1}\}$.

\next 
To see~\eqref{ass:stable}, recall that $A$ is linear and $\mu_\ell(\cdot)$ is always 
efficient~\eqref{eq:twolevel:efficient}. Therefore,~\eqref{ass:stable} follows with 
the abstract arguments of~\eqref{eq:18}.
\end{proof}

\subsection{Faermann's residual error estimator}
\label{section:meshsize}
\noindent
For a given triangulation $\TT_\star$ of $\Gamma$, let 
$\NN_\star$ be the set of nodes of $\TT_\star$. Define the node patch
\begin{align}
 \omega_\star(z) := \bigcup\set{T\in\TT_\star}{z\in T}\subseteq \Gamma,
\end{align}
i.e., the union of all elements which contain $z$. 
The Faermann error estimator was introduced in \cite{faermann2d,faermann3d} for 
$d=2$ resp. $d=3$.
Its local contributions  read
\begin{equation}\label{eq:faermann}
\mu_{\star}(F;T)^{2}:=\sum_{z\in T\cap\NN_{\star}}\semiHh{F-AU_{\star}(F)}{\omega_\star(z)}^{2}\ffor T\in\TT_{\star}.
\end{equation}
Here, $\semiHs{\cdot}{\omega}$, for $0<s<1$, denotes the Sobolev-Slobodeckij
seminorm
\[
\semiHs u{\omega}^{2}=\int_{\omega}\int_{\omega}\frac{|u(x)-u(y)|^{2}}{|x-y|^{d-1+2s}}\, d\Gamma(x)\, d\Gamma(y)\ffor u\in H^{s}(\omega).
\]
So far, the Faermann error estimator is the only a~posteriori BEM error estimator
which is proven to be reliable and efficient~\cite{faermann2d,faermann3d,cf}
\begin{align}\label{eq:faermann0}
 \Ceff^{-1}\,\mu_\star(F) \le \norm{u(F)-U_\star(F)}{\H^{-1/2}(\Gamma)}
 \le \Crel\,\mu_\star(F).
\end{align}
The constants $\Ceff,\Crel>0$ depend only on $\Gamma$ and the shape 
regularity~\eqref{eq:shaperegular} of $\TT_\star$. We note that efficiency of, e.g.,
the weighted-residual error estimator $\eta_\star(\cdot)$ is so far only mathematically
proved for 2D and particular smooth right-hand sides $F$; see~\cite{eps65}.

\begin{theorem}
\label{thm:faermann-convergence}%
Suppose that the Faermann error estimator \eqref{eq:faermann} is used for 
marking~\eqref{eq:doerfler}.
Suppose that the mesh-refinement guarantees uniform $\gamma$-shape
regularity~\eqref{eq:shaperegular} of the\linebreak meshes $\TT_{\ell}$ generated, as well as that
all marked elements $T\in\MM_{\ell}$ are refined into sons $T'\in\TT_{\ell+1}$
with $|T'|\le\kappa\,|T|$ with some uniform constant $0<\kappa<1$.
For all $F\in H^{1/2}(\Gamma)$, Algorithm~\ref{algorithm} then guarantees 
estimator convergence
\begin{align}\label{eq:faermann1}
\mu_{\ell}(F)\to0\quad\text{as }\ell\to\infty
\end{align}
as well as convergence of the discrete solutions
\begin{align}\label{eq:faermann2}
\norm{u(F)-U_{\ell}(F)}{\H^{-1/2}(\Gamma)}\to0\quad\text{as }\ell\to\infty.
\end{align}
\end{theorem}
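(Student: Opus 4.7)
The plan is to deduce Theorem~\ref{thm:faermann-convergence} from Proposition~\ref{prop:new}, using as auxiliary estimator the same modified weighted-residual estimator $\rho_\ell(F;T) := \|\widetilde h_\ell^{1/2}\nabla_\Gamma(F - AU_\ell(F))\|_{L^2(T)}$ that already served in the proof of Theorem~\ref{thm:symm:twolevel}, on the dense subset $D := H^1(\Gamma) \subset H^{1/2}(\Gamma) = \HH^{*}$. The superset $\RR_\ell \supseteq \MM_\ell$ will be taken as the one-ring of $\MM_\ell$, i.e., the union of all elements $T' \in \TT_\ell$ sharing at least one node with a marked element; by $\gamma$-shape regularity~\eqref{eq:shaperegular}, the overlap factor $\#\{T \in \MM_\ell : T' \subset \RR_\ell(T)\}$ is uniformly bounded.

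First I would verify~\eqref{ass:local}. For a marked element $T \in \MM_\ell$ and a node $z \in T \cap \NN_\star$, the residual $F - AU_\ell(F) \in H^1(\Gamma)$ has vanishing integral mean on every element $T' \in \TT_\ell$ by Galerkin orthogonality~\eqref{eq:symm:mean:zero}. On the $\gamma$-shape-regular node patch $\omega_\ell(z)$, this allows a Poincar\'e-plus-interpolation estimate of the form
\begin{align*}
 \semiHh{F - AU_\ell(F)}{\omega_\ell(z)}^2
 \lesssim \diam(\omega_\ell(z))\,\|\nabla_\Gamma(F - AU_\ell(F))\|_{L^2(\omega_\ell(z))}^2,
\end{align*}
which is essentially the core estimate behind the reliability proofs of~\cite{faermann2d,faermann3d,cf}. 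Summation over $z \in T \cap \NN_\star$ and $T \in \MM_\ell$, together with finite patch overlap, yields $\mu_\ell(F;\MM_\ell) \le \c{local}\,\rho_\ell(F;\RR_\ell)$.

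The contraction property~\eqref{ass:contraction} is inherited almost verbatim from the corresponding step in the proof of Theorem~\ref{thm:symm:twolevel}: it relies only on the pointwise mesh-size reduction $\widetilde h_{\ell+k} \le q\,\widetilde h_\ell$ on $\bigcup\RR_\ell$ together with the inverse estimate~\eqref{eq:novel:invest} applied to the Galerkin difference $U_{\ell+k}(F) - U_\ell(F)$. The stability~\eqref{ass:stable} is obtained by the abstract argument~\eqref{eq:18}, exploiting linearity of $A$, the seminorm structure of the Faermann estimator, and its efficiency from~\eqref{eq:faermann0}. Proposition~\ref{prop:new} then yields $\mu_\ell(F) \to 0$ for every $F \in H^{1/2}(\Gamma)$, which is~\eqref{eq:faermann1}; convergence~\eqref{eq:faermann2} of the Galerkin solutions is then an immediate consequence of the reliability part of~\eqref{eq:faermann0}.

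The main obstacle I anticipate is the Poincar\'e-type estimate for $\semiHh{\cdot}{\omega_\ell(z)}$ in~\eqref{ass:local}: the $H^{1/2}$-seminorm is nonlocal and the patch $\omega_\ell(z)$ consists of several shape-regular elements whose union may be non-convex (especially in 3D), so the proof must carefully exploit the piecewise vanishing mean property of the residual rather than a global mean condition on the patch. A secondary technical point is that $\RR_\ell$ strictly contains $\MM_\ell$, so the contraction argument~\eqref{ass:contraction} implicitly requires that the mesh-refinement shrinks the local mesh-size on the entire one-ring and not only on the marked elements; for standard strategies such as newest-vertex bisection this is delivered by the mandatory closure that preserves shape regularity, but it must be kept in mind when checking the hypotheses of the theorem.
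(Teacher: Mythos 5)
Your overall strategy (verify \eqref{ass:local}--\eqref{ass:stable} for a modified weighted-residual estimator on the dense set $D=H^1(\Gamma)$, take $\RR_\ell$ as the one-ring of $\MM_\ell$, and invoke Proposition~\ref{prop:new}, with \eqref{eq:faermann2} following from reliability~\eqref{eq:faermann0}) is exactly the paper's, but the contraction step \eqref{ass:contraction} contains a genuine gap. If $\widetilde h_\ell$ is the mesh-width function of Theorem~\ref{thm:symm:twolevel}, i.e.\ $\widetilde h_\ell|_T=|T|^{1/(d-1)}$, then the pointwise reduction $\widetilde h_{\ell+k}\le q\,\widetilde h_\ell$ holds only on elements that are actually refined. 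On $\RR_\ell\setminus\MM_\ell$ --- the unrefined neighbours in the one-ring --- the element measure does not decrease at all, so the Theorem~\ref{thm:symm:twolevel} argument does \emph{not} carry over ``almost verbatim''. You notice this, but your proposed remedy is incorrect: the closure step of newest-vertex bisection only propagates refinement along refinement edges to restore conformity; it does not refine every element touching a marked one, and in any case the theorem's hypotheses only guarantee that \emph{marked} elements are refined with $|T'|\le\kappa\,|T|$. The paper resolves this by replacing $|T|^{1/(d-1)}$ with the specially constructed mesh-width function of \eqref{eq:mannomann} (from \cite[Lemma~2]{ffkmp:part1} and \cite[Section~8.7]{axioms}), which is uniformly equivalent to $\diam(T)$, monotone, and contracts by a uniform factor $q<1$ on the entire $k$-patch $\omega_\ell^{k}(\TT_\ell\backslash\TT_{\ell+1})$ of refined elements; only with this $\widetilde h_\ell$ does one obtain $\widetilde h_{\ell+1}|_T\le q\,\widetilde h_\ell|_T$ for all $T\in\RR_\ell=\omega_\ell(\MM_\ell)$, which is what the Young-inequality/inverse-estimate argument needs. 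Without this ingredient your proof of \eqref{ass:contraction} fails precisely on the extra layer that \eqref{ass:local} forces you to include.

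Two smaller remarks on \eqref{ass:local}. The patch estimate you postulate, $\semiHh{v}{\omega_\ell(z)}\lesssim\diam(\omega_\ell(z))^{1/2}\norm{\nabla_\Gamma v}{L^2(\omega_\ell(z))}$, is exactly Proposition~\ref{thm:faermann-residual-estimate}; it is not simply quotable from the reliability proofs of \cite{faermann2d,faermann3d,cf} in the general 2D/3D node-patch setting, and the paper proves it via a generalized scaling lemma for Slobodeckij seminorms under bi-Lipschitz maps (Lemma~\ref{lem:scaling}), a uniformly bi-Lipschitz parametrization of node patches over reference patches (Lemma~\ref{lem:mapping}), and a Poincar\'e inequality on the reference patch. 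Moreover, your emphasis on the element-wise vanishing mean \eqref{eq:symm:mean:zero} is unnecessary here: the Slobodeckij \emph{semi}norm annihilates constants, so the estimate holds for every $v\in H^1(\omega_\ell(z))$ without any mean-zero condition (that property is needed for the two-level estimator in Theorem~\ref{thm:symm:twolevel}, not for the Faermann estimator). Your treatment of \eqref{ass:stable} via efficiency and the abstract argument \eqref{eq:18} agrees with the paper.
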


Note that the convergence~\eqref{eq:faermann2} follows from the estimator 
convergence~\eqref{eq:faermann1} and reliability~\eqref{eq:faermann0}.
Hence, the claim of Theorem~\ref{thm:faermann-convergence} follows from 
Proposition~\ref{prop:new} as soon
as we have verified the abstract assumptions~\eqref{ass:local}--\eqref{ass:stable}.
While the proofs of~\eqref{ass:contraction}--\eqref{ass:stable}
are similar to those of the two-level error estimator from 
Theorem~\ref{thm:symm:twolevel}, the proof of~\eqref{ass:local} is
technically more involved and yields $\mu_\ell(F;\MM_\ell)\lesssim \rho_\ell(F;\RR_\ell)$ with $\RR_\ell$ consisting of all marked elements plus one additional 
layer of elements, i.e.,  
\begin{align}\label{eq:faermann4}
\RR_\ell := \set{T\in\TT_\ell}{\exists T'\in\MM_\ell\quad T\cap T'\neq\emptyset}.
\end{align}

\begin{proof}[Proof of Assumptions \eqref{ass:contraction}--\eqref{ass:stable} for
Theorem \ref{thm:faermann-convergence}.]
In view of~\eqref{eq:faermann4}, we require a modified mesh-width
function $\widetilde{h}_{\ell}:\Gamma\to\R$ which is contractive on each element
$T$ which touches a marked element. For a subset $\EE_{\ell}\subseteq\TT_{\ell}$,
we define the $k$-patch $\omega_\ell^k(\EE_\ell)\subseteq \TT_{\ell}$ inductively by 
\begin{subequations}\label{eq:kpatch}
\begin{align}
\omega_{\ell}^{0}(\EE_{\ell}) & =\EE_{\ell}\quad\text{and}\quad\omega_{\ell}^{k}(\EE_{\ell})=\set{T\in\TT_{\ell}}{\exists T'\in\omega_{\ell}^{k-1}(\EE_{\ell})\quad T\cap T'\neq\emptyset}.
\end{align}
For simplicity, we write 
\begin{align}
 \omega_\ell(\cdot):=\omega_\ell^1(\cdot)
 \quad\text{and}\quad
 \omega_\ell^k(T):=\omega_\ell^k(\{T\})
 \quad\text{for elements $T\in\TT_\ell$}.
\end{align}%
\end{subequations}
Then, there exists $\widetilde{h}_{\ell}:\Gamma\to\R$ which satisfies, for
fixed $k\in\N$ and arbitrary $\ell\in\N$, 
\begin{subequations}\label{eq:mannomann}
\begin{align}
\c{meshsize}^{-1}\diam(T)\le\widetilde{h}_{\ell}|_{T}\le\diam(T) & \quad\text{for all }T\in\TT_{\ell},\label{eq:mesh-width-equivalence}\\
\widetilde{h}_{\ell+1}|_{T}\le\widetilde{h}_{\ell}|_{T} & \quad\text{for all }T\in\TT_{\ell},\\
\widetilde{h}_{\ell+1}|_{T}\le q\,\widetilde{h}_{\ell}|_{T} & \quad\text{for all }T\in\omega_{\ell}^{k}(\TT_{\ell}\backslash\TT_{\ell+1})\label{eq:mesh-width-contractivity}
\end{align}
\end{subequations}
with constants $\c{meshsize}>0$ and $0<q<1$. We note that 
$\TT_{\ell}\backslash\TT_{\ell+1}$
are precisely the refined elements. For bisection-based mesh-refinement in 2D
and 3D, the explicit construction of such a modified
mesh-width function $\widetilde h_\ell$ is given 
in~\cite[Lemma~2]{ffkmp:part1}. In~\cite[Section~8.7]{axioms}, the construction is 
generalized to $\gamma$-shape regular triangulations $\TT_{\ell}$ of 
$n$-dimensional manifolds, $n\ge2$. For $d=2$, i.e.\ $\Gamma$ being a one-dimensional
manifold, the construction is even simpler. 

\next
Overall, we consider an equivalent
weighted-residual error estimator $\rho_\ell(F)$ given by
\begin{align}
 \c{meshsize}^{-1/2}\,\eta_\ell(F;T)
 \le \rho_\ell(F;T) 
 := \norm{\widetilde h_\ell^{1/2}\nabla_\Gamma(F-AU_\ell(F))}{L^2(T)}
 \le \eta_\ell(F;T)
\end{align}
with arbitrary, but fixed $k\geq 1$. 

%
\next To prove \eqref{ass:contraction} with $\RR_{\ell}=\omega_{\ell}(\MM_{\ell})$,
we note that all marked elements are refined, i.e., $\omega_{\ell}^{k}(\MM_{\ell})\subseteq\omega_{\ell}^{k}(\TT_{\ell}\backslash\TT_{\ell+1})$.
Therefore, property \eqref{eq:mesh-width-contractivity} of $\widetilde{h}_{\ell}$ 
ensures $\widetilde{h}_{\ell+1}|_{T}\le q\,\widetilde{h}_{\ell}|_{T}$
for all $T\in\RR_{\ell}$. Arguing as in Theorem~\ref{thm:symm:twolevel}, we
prove \eqref{ass:contraction}.

\next To see~\eqref{ass:stable}, recall that $A$ is linear and $\mu_\ell(\cdot)$ is always 
efficient~\eqref{eq:faermann0}. Therefore,~\eqref{ass:stable} follows with 
the abstract arguments of~\eqref{eq:18}.
\end{proof}

The following proposition provides an estimate for the Slobodeckij seminorm,
needed to establish the local lower bound \eqref{ass:local}. It is related to recent results from~\cite{heuer14}, which studies scalability of different $H^s$-seminorms. Unlike~\cite{heuer14}, we consider node patches 
\begin{align}\label{eq:patch:gme}
 \omega_\star(z) := \bigcup\set{T\in\TT_\star}{z\in T}
\end{align}%
instead of elements.
\def\patch{\omega_\star(z)}

\begin{proposition}
\label{thm:faermann-residual-estimate}Let $\mesh$ be a triangulation
of $\Gamma$, $z\in\nodes$ and $s\in(0,1)$. Then, 
\begin{equation}
\semiHs v{\patch}\le C_{\star}\diam(\patch)^{1-s}\norm{\sgrad{\Gamma}v}{L^{2}(\patch)}\ffor v\in H^{1}(\patch).\label{eq:slobodeckij-estimate}
\end{equation}
The constant $C_{\star}>0$ depends only on $\Gamma$ and the $\gamma$-shape
regularity of $\mesh$.
\end{proposition}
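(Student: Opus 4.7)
The plan is a classical combination of a Poincar\'e inequality on the node patch with the fractional-order interpolation estimate, closed by a scaling argument.

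First, since the Slobodeckij seminorm is invariant under addition of constants, i.e., $\semiHs{v+c}{\patch} = \semiHs{v}{\patch}$ for all $c\in\R$, I would replace $v$ by $w := v - \overline v$, where $\overline v$ denotes the integral mean of $v$ over $\patch$. Second, on the bounded Lipschitz domain $\patch$, I would invoke the standard fractional interpolation estimate
\[
 \semiHs{w}{\patch} \le c_1\,\norm{w}{L^2(\patch)}^{1-s}\,\semiH{w}{\patch}^{s}
\]
together with the Poincar\'e inequality for functions of vanishing mean,
\[
 \norm{w}{L^2(\patch)} \le c_2\,\diam(\patch)\,\norm{\sgrad{\Gamma}v}{L^2(\patch)}.
\]
Since $\semiH{w}{\patch} = \norm{\sgrad{\Gamma}v}{L^2(\patch)}$, combining the two yields
\[
 \semiHs{v}{\patch} = \semiHs{w}{\patch} \le c_1\,c_2^{1-s}\,\diam(\patch)^{1-s}\,\norm{\sgrad{\Gamma}v}{L^2(\patch)},
\]
which is the claim with $C_\star := c_1\,c_2^{1-s}$.

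The main technical point is to verify that $c_1$ and $c_2$ depend only on $\Gamma$ and the $\gamma$-shape regularity of $\mesh$, and not on the particular node $z$ or level. I would handle this by a scaling argument: rescale $\patch$ to unit diameter via $x\mapsto x/\diam(\patch)$. Under $\gamma$-shape regularity (which bounds the valence of $z$ and, for regular triangulations of bounded aspect ratio, forces all elements sharing $z$ to have comparable diameter, since neighbours across an edge are comparable and a patch consists of boundedly many elements), the rescaled patches form a precompact family of Lipschitz reference configurations, on each of which the interpolation and Poincar\'e constants are uniformly bounded. The $\diam(\patch)^{1-s}$ factor then emerges by counting scaling exponents: under a dilation by $\lambda$, $\semiHs{\cdot}{\patch}$ scales as $\lambda^{(d-1)/2-s}$, $\norm{\cdot}{L^2(\patch)}$ as $\lambda^{(d-1)/2}$, and $\norm{\sgrad{\Gamma}\cdot}{L^2(\patch)}$ as $\lambda^{(d-1)/2-1}$, so the inequality is dimensionally consistent.

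The main obstacle I would expect lies in taming node patches which straddle an edge of the polyhedral surface $\Gamma$: there the patch is a piecewise flat Lipschitz manifold rather than a subset of a hyperplane, so a careful bi-Lipschitz flattening of the local geometry (with constants depending only on $\Gamma$) is needed before the reference-domain interpolation and Poincar\'e inequalities can be applied, and one must check that the Slobodeckij seminorm transforms in a controlled way under such a flattening.
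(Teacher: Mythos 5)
Your core chain---subtract the mean, apply the multiplicative interpolation bound $|w|_{H^s(\omega)}\le c_1\,\|w\|_{L^2(\omega)}^{1-s}\,|w|_{H^1(\omega)}^{s}$, and close with the Poincar\'e inequality---is sound, and both estimates are scale invariant in the form you wrote them, so evaluating $c_1,c_2$ on the patch rescaled to unit diameter is legitimate. In spirit this is the paper's argument: the paper likewise reduces to a unit-size reference configuration and there uses the embedding $H^1\subset H^s$ together with Poincar\'e after subtracting the optimal constant, which, once your Poincar\'e step is in place, is interchangeable with your interpolation inequality. Your parenthetical geometric claims are also fine: $\gamma$-shape regularity plus conformity gives boundedly many elements of comparable diameter in each node patch.

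The genuine gap is the uniformity of the constants, which you delegate to the assertion that the rescaled patches form a ``precompact family of Lipschitz reference configurations'' and which you yourself flag as unresolved for patches that fold across edges or corners of the polyhedral surface and for nodes $z\in\partial\Gamma$. That is exactly where the paper does its work: Lemma~\ref{lem:mapping} constructs, for every node, a bi-Lipschitz chart $\kappa_z:\widehat\omega\to\omega_\star(z)$ from one of a controlled family of planar reference patches $\widehat\omega_N$, $\widehat\omega_N'$ (with $N$ bounded in terms of $\gamma$), whose Lipschitz constants satisfy $C^{-1}\diam(\omega_\star(z))\le L_1\le L_2\le C\,\diam(\omega_\star(z))$ with $C=C(\Gamma,\gamma)$, and Lemma~\ref{lem:scaling} quantifies how the Slobodeckij seminorm transforms under such maps, $|v|_{H^s(\kappa(O))}\simeq L^{k/2-s}\,|v\circ\kappa|_{H^s(O)}$ (with the $s=1$ analogue for the gradient term); this is precisely the ``controlled behaviour of the seminorm under flattening'' your sketch asks for but does not supply. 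Note also that precompactness alone would not finish the argument: you would additionally need upper semicontinuity of the Poincar\'e and interpolation constants along the family, or---what is simpler and what the uniformly bi-Lipschitz charts deliver---a uniform Lipschitz/cone parameter for all rescaled patches. So the proposal is correct in outline, but the flattening step you postpone is the actual mathematical content of the proposition and must be carried out to make the proof complete.
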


We postpone the proof of Proposition~\ref{thm:faermann-residual-estimate} and show how it implies \eqref{ass:local} for all $F\in H^1(\Gamma)$.

\begin{proof}[Proof of Assumption \eqref{ass:local} for Theorem \ref{thm:faermann-convergence}.]
Let $T\in\TT_{\ell}$. Summing \eqref{eq:slobodeckij-estimate} over
$z\in\NN_{\ell}\cap T$, we get
\[
\sum_{z\in\NN_{\ell}\cap T}\semiHs{v}{\omega_\ell(z)}^{2}\lesssim\sum_{z\in\NN_{\ell}\cap T}\diam(\omega_\ell(z))^{2(1-s)}\norm{\sgrad{\Gamma}v}{L^{2}(\omega_\ell(z))}^{2}.
\]
For $s=1/2$, $v=F-AU_{\ell}(F)\in H^1(\Gamma)$, and $\omega:=\bigcup\omega_\ell(\MM_\ell)=\bigcup\RR_\ell$ (see~\eqref{eq:kpatch} for the definition of the patch), this shows
\begin{eqnarray*}
\mu_{\ell}(F;\MM_{\ell})^{2} & = & \sum_{T\in\MM_{\ell}}\sum_{z\in\NN_{\ell}\cap T}
|F-AU_{\ell}(F)|_{H^{1/2}(\omega_\ell(z))}^{2}\\
 & \lesssim & \sum_{T\in\MM_{\ell}}\sum_{z\in\NN_{\ell}\cap T}\diam(T)\norm{\sgrad{\Gamma}(F-AU_{\ell}(F))}{L^{2}(\omega_\ell(z))}^{2}\\
 & \simeq & \norm{\widetilde{h}_{\ell}^{1/2}\sgrad{\Gamma}(F-AU_{\ell}(F))}{L^{2}(\omega)}^{2}=\rho_{\ell}(F;\RR_{\ell})^{2}.
\end{eqnarray*}
This concludes the proof.
\end{proof}

To establish Proposition~\ref{thm:faermann-residual-estimate}, we
need two additional lemmas. The first enables us to use a ``generalized'' scaling
argument which allows for bi-Lipschitz deformations of the reference domain. A mapping $\kappa:O\to\R^d$ with $O\subset \R^k$ open and $1\leq k\leq d$ is called bi-Lipschitz if it satisfies for some constants $L_1,L_2>0$
\begin{equation}\label{eq:bilipschitz}
L_1|x-y|\le|\k(x)-\k(y)|\le L_2|x-y|\ffor x,y\in O.
\end{equation}
This allows to formulate the following lemma.

\begin{lemma}[Generalized scaling property of Sobolev seminorms]
\label{lem:scaling} Let $\k:O\to\R^{d}$ be bi-Lipschitz~\eqref{eq:bilipschitz}. Then,
it holds
\begin{subequations}
\begin{equation}
C^{-1}L_1^{k/2-s}\semiHs{v\circ\kappa}O\le\semiHs{v}{\kappa(O)}\le CL_2^{k/2-s}\semiHs{v\circ\kappa}O\label{eq:scaling}
\end{equation}
for all $v\in H^{s}(\kappa(O))$ and $0<s\le1$. The constant $C>0$ satisfies
\begin{align}\label{eq:scaling2}
C\le (L_2/L_1)^{(d+2)/2}.
\end{align}
\end{subequations}
\end{lemma}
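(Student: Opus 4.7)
The plan is to reduce the claim to a change-of-variables argument based on the area formula, combined with pointwise bi-Lipschitz estimates for distances and for the $k$-dimensional Jacobian of $\kappa$. This is the natural generalization of the standard scaling identity for affine transformations, the novelty being only that $\kappa$ need not be smooth, so Rademacher's theorem and Federer's area formula enter the picture.

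First I would treat the case $0<s<1$. The image $\kappa(O)\subset\R^d$ carries a $k$-dimensional Hausdorff measure with respect to which the Slobodeckij seminorm has exponent $k+2s$. Substituting $x=\kappa(\hat x)$, $y=\kappa(\hat y)$ and invoking the area formula gives
\[
\semiHs{v}{\kappa(O)}^{2}
= \int_{O}\int_{O}\frac{|v(\kappa(\hat x))-v(\kappa(\hat y))|^{2}}{|\kappa(\hat x)-\kappa(\hat y)|^{k+2s}}\,J_\kappa(\hat x)\,J_\kappa(\hat y)\,d\hat x\,d\hat y,
\]
where $J_\kappa=\sqrt{\det(D\kappa^{\top}D\kappa)}$ is the $k$-dimensional Jacobian. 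The bi-Lipschitz hypothesis~\eqref{eq:bilipschitz} yields both the pointwise distance estimate $L_{1}|\hat x-\hat y|\le|\kappa(\hat x)-\kappa(\hat y)|\le L_{2}|\hat x-\hat y|$ and, by differentiating~\eqref{eq:bilipschitz} a.e.\ (Rademacher) and bounding the singular values of $D\kappa$, the Jacobian estimate $L_{1}^{k}\le J_\kappa\le L_{2}^{k}$. Inserting these into the transformed integral immediately produces
\[
L_{2}^{-(k+2s)}L_{1}^{2k}\,\semiHs{v\circ\kappa}{O}^{2}
\le \semiHs{v}{\kappa(O)}^{2}
\le L_{1}^{-(k+2s)}L_{2}^{2k}\,\semiHs{v\circ\kappa}{O}^{2}.
\]
Taking square roots and pulling out the factor $L_{2}^{k/2-s}$ (resp.\ $L_{1}^{k/2-s}$) on each side, the multiplicative remainder is precisely $(L_{2}/L_{1})^{k/2+s}$. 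Since $k\le d$ and $s\le 1$, this is bounded by $(L_{2}/L_{1})^{(d+2)/2}$, which is exactly~\eqref{eq:scaling2}.

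The endpoint $s=1$ has to be handled separately because the Slobodeckij integral diverges there; instead one uses the surface-gradient definition and the chain rule $\nabla(v\circ\kappa)=(\nabla_{\kappa(O)}v)\circ\kappa\cdot D\kappa$ a.e., combines $\|D\kappa\|\le L_{2}$ and $\|(D\kappa)^{-1}\|\le L_{1}^{-1}$ with the area formula, and checks that the resulting constant again fits under the bound $(L_{2}/L_{1})^{(d+2)/2}$. The main technical obstacle is precisely this measure-theoretic setup: justifying the area formula on the $k$-dimensional image manifold $\kappa(O)\subset\R^{d}$ and deriving the Jacobian bounds $L_{1}^{k}\le J_\kappa\le L_{2}^{k}$ rigorously from the bi-Lipschitz assumption. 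Once that is in place, the rest is bookkeeping of exponents.
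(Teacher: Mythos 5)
Your proposal is correct and follows essentially the same route as the paper: change of variables in the Slobodeckij double integral with the Jacobian bounds $L_1^{k}\le J\kappa\le L_2^{k}$ obtained from Rademacher's theorem and the singular-value estimates, the distance bounds from bi-Lipschitz continuity, the exponent bookkeeping $k/2+s\le(d+2)/2$, and a separate chain-rule argument for $s=1$. The only cosmetic point is that for $k<d$ the matrix $D\kappa$ is not square, so one should speak of its smallest singular value rather than $(D\kappa)^{-1}$, exactly as the pointwise two-sided gradient estimate in the paper does.
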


\begin{proof}
First, we consider the case $0<s<1$. According to Rademacher's theorem~\cite[Section~3.1]{evans92}, Lipschitz continuous functions are differentiable almost everywhere. An immediate consequence of~\eqref{eq:bilipschitz}
thus is
\begin{equation}
L_{1}|v|\le|D\k(x)v|\le L_{2}|v|\ffor v\in\R^{k}\text{ and a.e. }x\in O.\label{eq:bilipschitz2}
\end{equation}
Denote the Jacobian determinant by $J\kappa:=\sqrt{\det(D\kappa (D\kappa)^T)}$.
Interpreting~\eqref{eq:bilipschitz2} as an estimate for the eigenvalues of 
$D\kappa (D\kappa)^T$, one obtains
\begin{equation}
L_{1}^{k}\le J\k\le L_{2}^{k}\quad\text{a.e. in }O\label{eq:bilipschitz3}.
\end{equation}%
The estimates~\eqref{eq:bilipschitz3} and~\eqref{eq:bilipschitz} show
\begin{eqnarray*}
\semiHs{v}{\k(O)}^{2} & = & \int_{O}\int_{O}\frac{|v\circ\kappa(x)-v\circ\kappa(y)|^{2}}{|\k(x)-\k(y)|^{k+2s}}J\k(x)J\k(y)\, dx\, dy\\
 & \le & L_{2}^{2k}\int_{O}\int_{O}\frac{|v\circ\kappa(x)-v\circ\kappa(y)|^{2}}{|\k(x)-\k(y)|^{k+2s}}\, dx\, dy\\
 & \le & L_{1}^{-(k+2s)}L_{2}^{2k}\int_{O}\int_{O}\frac{|v\circ\kappa(x)-v\circ\kappa(y)|^{2}}{|x-y|^{k+2s}}\, dx\, dy
 = L_1^{-(k+2s)}L_2^{2k} \semiHs{v\circ\kappa}{O}^2.
\end{eqnarray*}
This proves $\semiHs{v}{\k(O)} \leq (L_2/L_1)^{k/2+s}L_2^{k/2-s}\semiHs{v\circ\kappa}{O}$. With $(L_2/L_1)\geq 1$ and $k/2+s\le(d+2)/2$, we obtain the upper estimate of~\eqref{eq:scaling}.
The lower estimate
follows analogously.

The case $s=1$ follows from the chain rule and~\eqref{eq:bilipschitz2}--\eqref{eq:bilipschitz3}, where $\Gamma:=\kappa(O)$ is the induced surface: The pointwise estimate 
\[
L_{2}^{-2}|\nabla (v\circ\kappa)|^{2}\le|(\nabla_\Gamma v)\circ\k|^{2}\le L_{1}^{-2}|\nabla (v\circ\kappa)|^{2}
\]
and integration over $O$ shows
\[
L_2^{-2}L_{1}^{k}\semiH{v\circ\kappa}O^{2}\le\semiH{v}{\k(O)}^{2}\le L_1^{-2}L_{2}^{k}\semiH{v\circ\kappa}O^{2}.
\]
This concludes the proof for $s=1$.
\end{proof}

\begin{figure}[t]
\includegraphics[width=.23\textwidth]{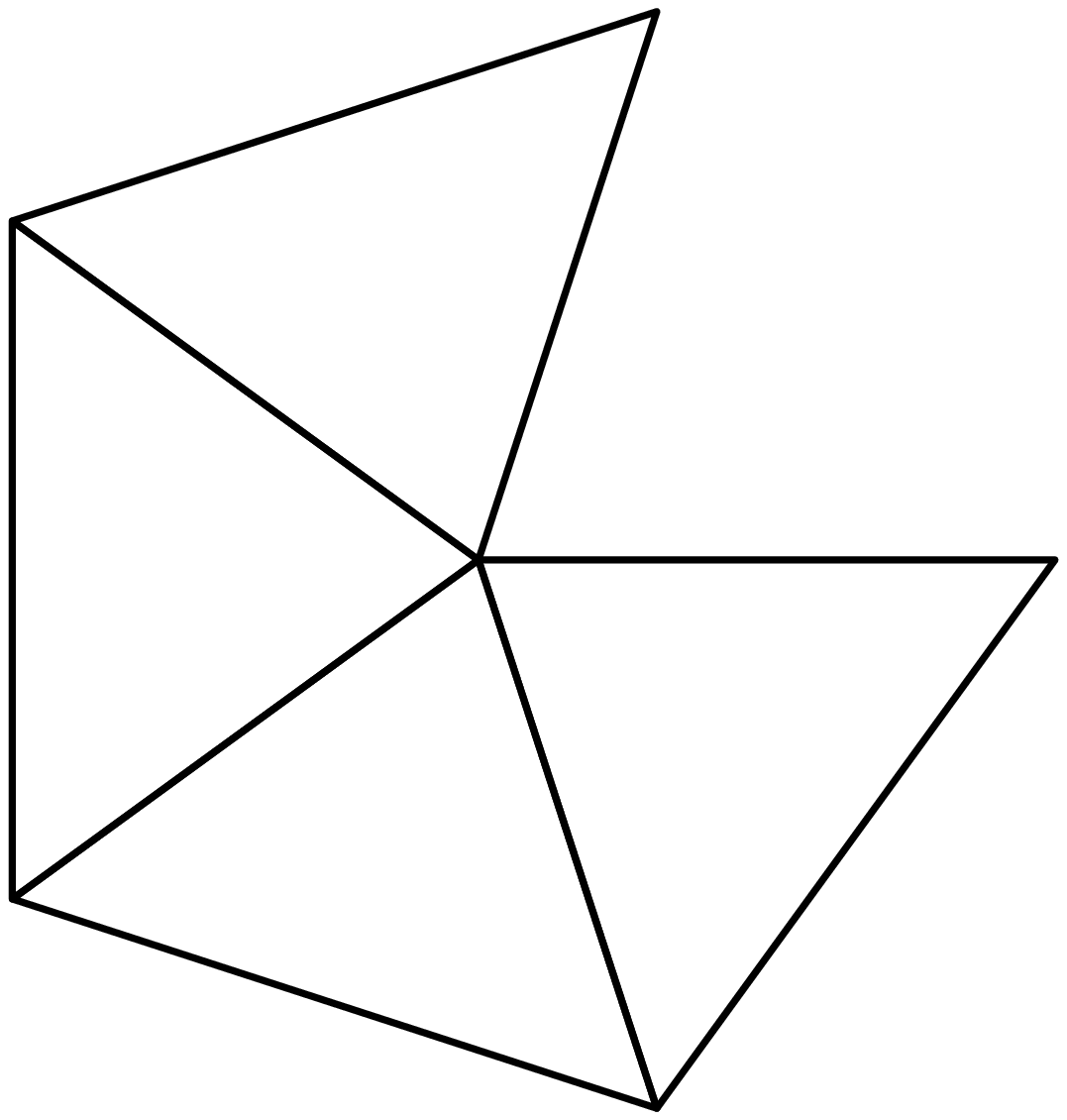}
\hfill
\includegraphics[width=.23\textwidth]{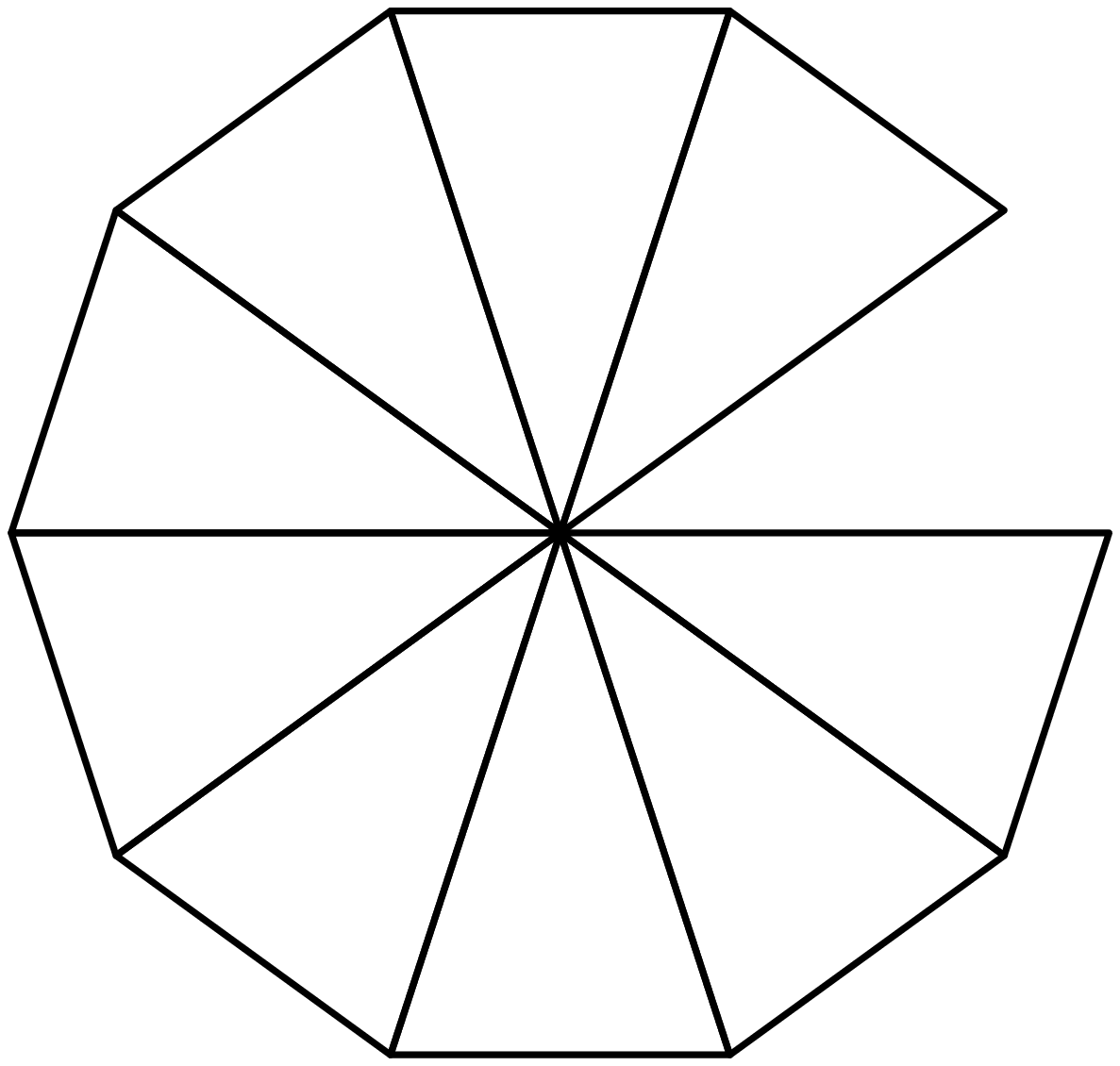}
\hfill
\includegraphics[width=.23\textwidth]{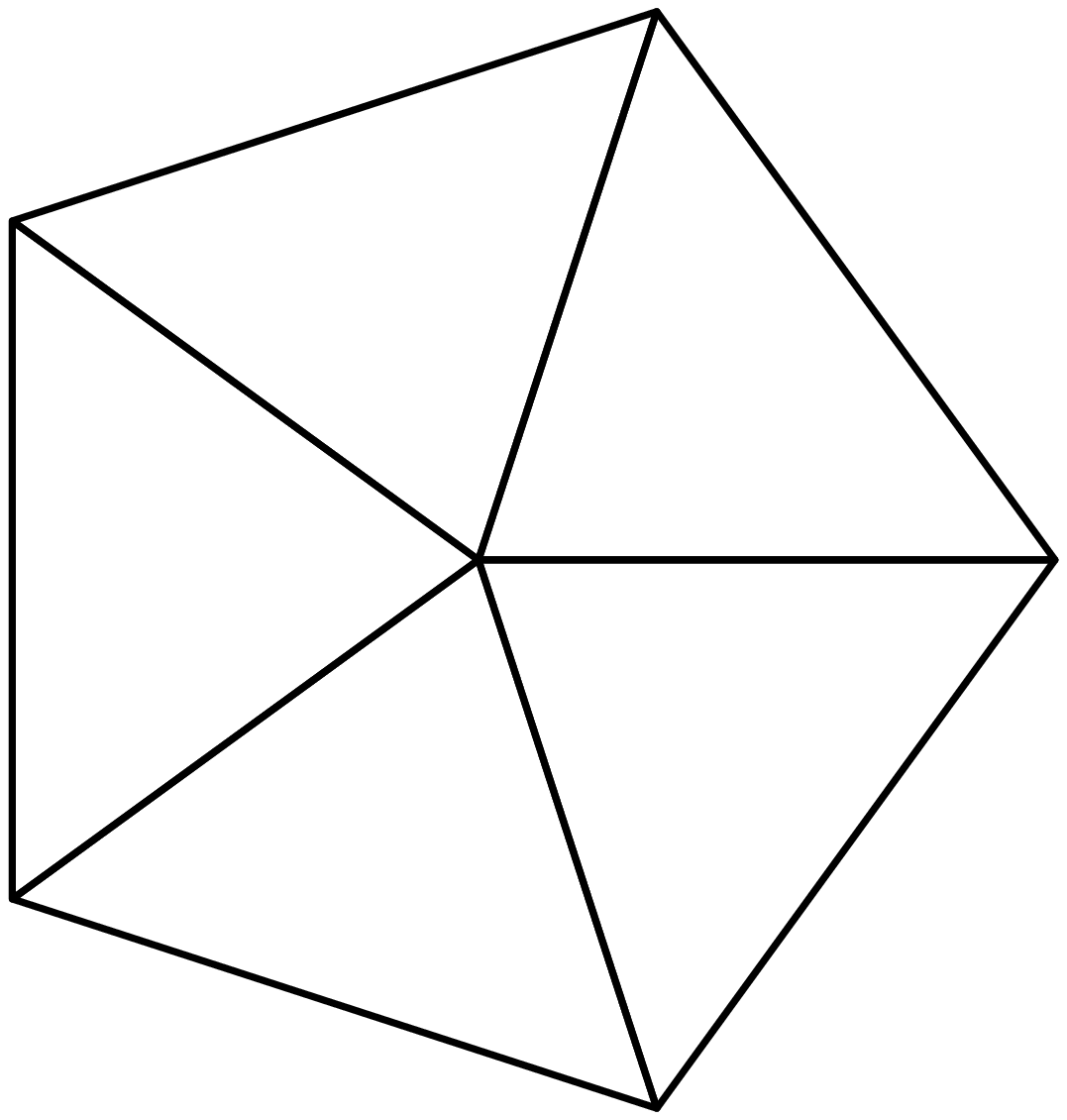}
\hfill
\includegraphics[width=.23\textwidth]{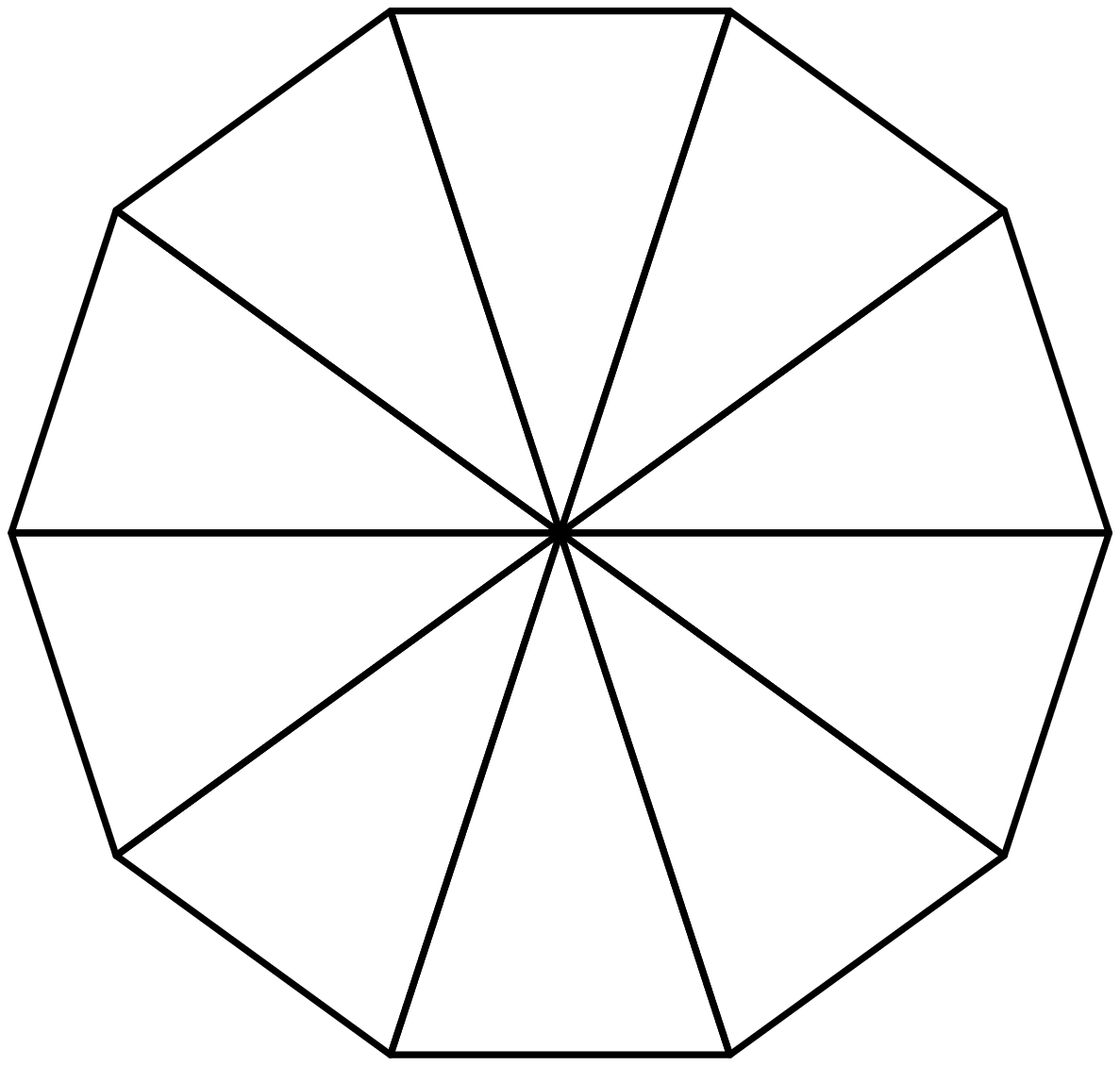}
\caption{Reference patches $\widehat\omega_N$ and $\widehat\omega'_N$ in 
Lemma~\ref{lem:mapping} for $z\in\partial\Gamma$ and $N=4$ as well as $N=9$ 
resp.\ for $z\not\in\partial\Gamma$ and $N=5$ as well as $N=10$ (from left to right).}
\label{fig:patches}
\end{figure}

It remains to bound the Lipschitz constants $(L_2/L_1)^{(d+2)/2}$ in~\eqref{eq:scaling2} for our particular case of $\kappa(O)$ being a node-patch on a polyhedral surface.
To that end, define for any $N\ge3$ the \emph{reference patch }$\patchref N\subset\R^{2}=\mathbb C$
to be the compact regular polygon with corners $e^{\frac{2\pi ik}{N}}$,
for $k=0,\ldots,N-1$ (where $0$ is an interior point).
Moreover, let ${\rm conv}\{\cdot\}$ denote the closed convex hull.
Define $\widehat\omega_1^\prime:={\rm conv}\{0,1,i\}$ and,
for $N\geq 2$, $\widehat\omega_N^\prime:=\widehat\omega_{N+1} \setminus{\rm interior}({\rm conv}\{0,1,e^{\frac{2\pi i}{N+1}}\})$ (where $0$ is a boundary vertex);
see Figure~\ref{fig:patches}.
The next lemma constructs appropriate uniformly bi-Lipschitz pullbacks to the reference patches.
Since the proof is elementary but lengthy, we only sketch it and refer 
to~\cite{mitschaeibl} for the details.

\begin{lemma}
\label{lem:mapping}Let $z\in\nodes$ be some node of a triangulation $\mesh$ of $\Gamma\subset\R^d$, and let $d=2,3$. Let $N:=\#\{T\in\mesh\colon z\in T\}$
be the number of elements in the node patch $\patch$ from~\eqref{eq:patch:gme} and define
\begin{align*}
\widehat\omega := (-1,1)
\quad\text{for $d=2$}\quad
\text{resp.}\quad
\widehat\omega := \begin{cases}
\widehat\omega_N^\prime&\text{for }z\in\partial\Gamma,\\
\widehat\omega_N&\text{for }z\not\in\partial\Gamma,
\end{cases}
\quad\text{for $d=3$.}
\end{align*}%
Then, there exists 
\[
\k_{z}:\widehat\omega\to\patch
\]
bi-Lipschitz with
\begin{eqnarray}\label{eq:bilip:kappa}
C^{-1}\, \diam(\patch)\le L_1\quad
\text{and}
\quad
L_2  \le  C\,\diam(\patch).
\end{eqnarray}
The constant $C>0$ depends only on $\Gamma$ and the $\gamma$-shape
regularity of $\mesh$.
\end{lemma}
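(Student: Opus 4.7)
The plan is to construct $\kappa_z$ piecewise on a subdivision of $\widehat\omega$ and then to verify the bi-Lipschitz bounds by a combination of shape-regularity arguments and a geometric non-folding property of the patch.

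A preliminary reduction: by the $\gamma$-shape regularity~\eqref{eq:shaperegular}, the number $N$ of elements meeting at $z$ is bounded by a constant $N_{\max}=N_{\max}(\gamma)$. Hence only finitely many reference shapes $\widehat\omega_N,\widehat\omega_N'$ arise, and it suffices to establish~\eqref{eq:bilip:kappa} with constants that may depend on $N$, $\gamma$ and $\Gamma$. After rescaling, I may also assume $\diam(\omega_\star(z))=1$.

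For $d=2$, the patch $\omega_\star(z)$ consists of at most two affine segments meeting at $z$. If $z$ is an interior node, I would parametrize each segment by arc length, mapping $[-1,0]$ bijectively and affinely onto one segment and $[0,1]$ onto the other, with $0\mapsto z$ and $\pm 1$ mapping to the opposite endpoints. The local mesh-ratio bound~\eqref{eq:shaperegular} forces the two segments to have comparable lengths, which yields~\eqref{eq:bilip:kappa} directly; the boundary-node case is even simpler.

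For $d=3$, I would enumerate the elements $T_0,\dots,T_{N-1}$ of $\omega_\star(z)$ in cyclic order around $z$ (using the matching of shared edges in the patch) and define $\kappa_z$ on the $k$-th reference triangle $\widehat T_k := \mathrm{conv}\{0,e^{2\pi i k/N},e^{2\pi i (k+1)/N}\}$ as the unique affine map sending $0\mapsto z$ and the other two reference vertices to the remaining two vertices of $T_k$. Adjacent affine pieces coincide on their shared reference edge through $0$ by construction, so the global map is continuous and piecewise affine. $\gamma$-shape regularity controls both the diameters of the $T_k$ and their inradii in terms of $\diam(\omega_\star(z))$, so on each $\widehat T_k$ the affine map $\kappa_z|_{\widehat T_k}$ satisfies the piecewise version of~\eqref{eq:bilip:kappa}. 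The global upper Lipschitz estimate then follows by summing the piecewise bounds along the straight segment $[x,y]\subset\widehat\omega$, which crosses only finitely many $\widehat T_k$. The boundary-node case is treated identically with $\widehat\omega_N'$ in place of $\widehat\omega_N$.

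The hard part will be the global \emph{lower} Lipschitz bound, because the piecewise-affine construction could in principle cause $\kappa_z$ to fold, so that points in distant reference triangles are mapped close together in $\R^3$. This is ruled out by the fact that $\Gamma$ is a polyhedral Lipschitz surface: locally around $z$, the patch $\omega_\star(z)$ is the graph of a Lipschitz function over a planar domain, with Lipschitz constant controlled by $\Gamma$ and $\gamma$. Composing the piecewise affine map with the projection to this planar domain gives a bi-Lipschitz homeomorphism with uniform constants, and the projection itself is bi-Lipschitz between the patch and its planar image; combining these two bi-Lipschitz maps yields the desired lower bound in~\eqref{eq:bilip:kappa}. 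The technical details of the enumeration of elements and of the graph representation are carried out in~\cite{mitschaeibl}.
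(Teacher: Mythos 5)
Your construction is essentially the paper's own: the same piecewise-affine map defined triangle-by-triangle on the reference patch, matched via the cyclic adjacency at $z$, with $\gamma$-shape regularity giving the elementwise bounds and the Lipschitz character of the polyhedral surface $\Gamma$ accounting for the global (non-folding) lower bound, details deferred to~\cite{mitschaeibl} just as in the paper's sketch. Your explicit Lipschitz-graph argument simply spells out what the paper dismisses as ``straightforward,'' so this is correct and not a genuinely different route.
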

%
\begin{proof}[Sketch of proof]
We only sketch the case $d=3$, whereas the simpler case $d=2$ is left to the reader.
Let $\widehat T_1,\ldots,\widehat T_N$ denote the elements in $\widehat\omega=\bigcup_{j=1}^N\widehat T_j$ and let $T_1,\ldots,T_N$ denote the elements of $\omega_\star(z)=\bigcup_{j=1}^N T_j$.
Without loss of generality, we assume that the numbering of the elements is such that $\#(\widehat T_i\cap\widehat T_j) =\#(T_i\cap T_j)\in\{1,\infty\}$ for all $1\leq i,j\leq N$.
This allows to find a unique affine mapping $\kappa_j:\,\widehat T_j\to T_j$ which satisfies
\begin{align*}
 \kappa_j(0)=z\quad\text{and}\quad\kappa_j(\widehat T_j\cap \widehat T_i)= T_j\cap T_i\quad \text{for all }i=1,\ldots,N.
\end{align*}
Define $\kappa:\,\widehat\omega\to\omega_\star(z)$ as
\begin{align*}
 \kappa|_{\widehat T_j} = \kappa_j\quad\text{for all }j=1,\ldots,N.
\end{align*}
If $z^\prime\in (\widehat T_j\cap \widehat T_i)\setminus\{0\}$, there holds $\kappa_j(z^\prime)\in T_i\cap T_j$ and $\kappa_i(z^\prime)\in T_i\cap T_j$ by definition.
Since the $\kappa_j$ are affine, there holds 
$\kappa_i|_{\widehat E} = \kappa_j|_{\widehat E}$ on $\widehat E=\widehat T_i\cap\widehat T_j$.
This shows that $\kappa$ is well-defined and continuous.
Straightforward arguments show that $N$ and the Lipschitz continuity of the $\kappa_j$ depend only on the $\gamma$-shape regularity of $\TT_\star$. 
The Lipschitz continuity~\eqref{eq:bilip:kappa} of $\kappa$ depends additionally 
on $\Gamma$.
\end{proof}

With this at hand, the proof of Proposition~\ref{thm:faermann-residual-estimate} follows.

\begin{proof}[Proof of Proposition \ref{thm:faermann-residual-estimate}]
Using the mapping $\k=\k_z$ from Lemma \ref{lem:mapping}, we can apply
Lem\-ma~\ref{lem:scaling} with $O=\widehat\omega$ and $\k(O)=\patch$.
This immediately gives 
\[
\semiHs{v}{\patch}
\simeq\diam(\patch)^{(d-1)/2-s}\semiHs{v\circ\kappa}{\widehat\omega}
\]
for all $v\in H^{1}(\patch)$, $s\in(0,1]$, with constants depending only
on the $\gamma$-shape regularity of $\TT_\ell$. On the reference patch, we can
use the continuous embedding $H^{1}(\widehat\omega)\subset H^{s}(\widehat\omega)$
and Poincar\'e's inequality to obtain
\[
\semiHs{v\circ\kappa}{\widehat\omega}
= \min_{c\in\R}|v\circ\kappa-c|_{H^s(\widehat\omega)}
\le \min_{c\in\R}\|v\circ\kappa-c\|_{H^s(\widehat\omega)}
\lesssim \min_{c\in\R}\|v\circ\kappa-c\|_{H^1(\widehat\omega)}
\lesssim\semiH{v\circ\kappa}{\widehat\omega}.
\]
The hidden constant depends only on $\widehat\omega$ and is hence controlled by the $\gamma$-shape regularity of $\TT_\ell$. Combining the last
two estimates, we get
\begin{eqnarray*}
\semiHs{v}{\patch} & \simeq & \diam(\patch)^{(d-1)/2-s}\semiHs{v\circ\kappa}{\widehat\omega}\\
 & \lesssim & \diam(\patch)^{(d-1)/2-s}\semiH{v\circ\kappa}{\widehat\omega}\\
 & \simeq & \diam(\patch)^{1-s}\semiH{v}{\patch}.
\end{eqnarray*}
This concludes the proof.
\end{proof}

\subsection{Remarks and Extensions}
%
The inverse estimates of~\cite[Theorem~3.6]{ghs} and~\cite[Corollary~3]{afembem}
also apply to higher-order discretizations $\PP^p(\TT_\star)$ with piecewise 
polynomials of degree $p\ge0$ and curved surface triangles (where $\Gamma$ is assumed to be piecewise smooth). Also 
Proposition~\ref{thm:faermann-residual-estimate} can be proved for non-polygonal
boundaries. Consequently, the convergence results of Theorem~\ref{thm:symm:twolevel}
and Theorem~\ref{thm:faermann-convergence} also transfer to these settings. 
Moreover, rectangular elements can be covered.

In~\cite{faermann2d} the spaces $\PP^p(\TT_\star)$ are defined by local pullback
with the arc-length para\-metri\-za\-tion. While this is immaterial for piecewise affine
boundaries, $\PP^p(\TT_\star)$ depends on the chosen parametrization for
non-affine boundaries. For 2D BEM, this restriction is removed in the recent work~\cite{igabem}.

\def\normal{\boldsymbol{n}}

\section{Hyper-singular integral equation}
\label{section:hypsing}

\subsection{Model problem}
We consider the hyper-singular integral equation
\begin{align}\label{eq:hypsing}
 Au(x) = -\partial_{\normal(x)}\int_\Gamma \partial_{\normal(y)}G(x-y)\,u(y)\,d\Gamma(y) = F(x)
 \quad\text{for all }x\in\Gamma
\end{align}
on a relatively open, connected, and polygonal part $\Gamma\subsetneqq\partial\Omega$ of the boundary of a 
bounded, polyhedral Lipschitz domain $\Omega\subset\R^d$, $d=2,3$. 
(The case $\Gamma = \partial\Omega$ is sketched in Section~\ref{section:hypsing:remarks} below.) 
For $d=3$, we assume that the boundary of $\Gamma$ (a polygonal curve) is Lipschitz itself. In~\eqref{eq:hypsing}, $G$ denotes the 
fundamental solution of the Laplacian; see~\eqref{eq:G}. Moreover, 
$\partial_{\normal(x)}$ denotes the normal derivative at $x\in\Gamma$ with $\normal(x)$ the outer unit normal vector of $\Omega$.
The reader is referred to, e.g., the monographs~\cite{hw,mclean,ss,steinbach} for
proofs of and details on the following facts:
The hyper-singular integral operator
$A:\HH\to\HH^*$ is a continuous linear operator between the fractional-order 
Sobolev space $\HH=\H^{1/2}(\Gamma)$ and its dual $\HH^*=H^{-1/2}(\Gamma)$. Duality is 
understood with respect to the extended $L^2(\Gamma)$-scalar product $\dual\cdot\cdot$. 
Then, the hyper-singular integral operator is also elliptic
\begin{align}
 \dual{Av}{v} \ge \c{elliptic}\,\norm{v}{\H^{1/2}(\Gamma)}^2
 \quad\text{for all }v\in\HH = \H^{1/2}(\Gamma) 
\end{align}
with some constant $\c{elliptic}>0$ which depends only on $\Gamma$.
Thus, $A$ meets all assumptions of Section~\ref{section:abstract}, 
and $\norm{v}A^2:=\dual{Av}{v}$ even defines an equivalent Hilbert norm
on $\HH$.

\subsection{Discretization}
Let $\TT_\star$ be a $\gamma$-shape regular triangulation of $\Gamma$ as defined in Section~\ref{section:discretization}.
With $\XX_\star=\widetilde\SS^1(\TT_\star):=\PP^1(\TT_\star)\cap \H^{1/2}(\Gamma)$ being the space of $\TT_\star$-piecewise
affine, globally continuous functions which vanish at the boundary of $\Gamma$, we now consider the Galerkin formulation~\eqref{eq:galerkin}.

\subsection{Weighted-residual error estimator}
For given right-hand side $F\in L^2(\Gamma)$, the
residual $F-AU_\star(F)\in H^{-1/2}(\Gamma)$ has additional regularity
$F-AU_\star\in L^2(\Gamma)$,
since $A:\H^{1/2+s}(\Gamma)\to H^{-1/2+s}(\Gamma)$ is stable for 
$-1/2 \le s \le 1/2$ (but not isomorphic for $s=\pm1/2$).
It is proved in~\cite{cmps} that 
\begin{align}\label{eq:hypsing:eta}
 \norm{u(F)-U_\star(F)}{\H^{1/2}(\Gamma)}
 \simeq \norm{F-AU_\star(F)}{H^{-1/2}(\Gamma)}
 \lesssim \norm{h_\star^{1/2}(F-AU_\star(F))}{L^2(\Gamma)}
 =:\eta_\star(F).
\end{align}
Overall, this proves the reliability estimate
\begin{align}
 \norm{u(F)-U_\star(F)}{\H^{1/2}(\Gamma)}
 \le \CCrel\,\eta_\star(F),
\end{align}
and the constant $\CCrel>0$ depends only on $\Gamma$
and the $\gamma$-shape regularity~\eqref{eq:shaperegular} of $\TT_\star$.
In particular, the weighted-residual error estimator can be localized via
\begin{align}
 \eta_\star(F) = \Big(\sum_{T\in\TT_\star}\eta_\star(F;T)^2\Big)^{1/2}
 \text{ with }
 \eta_\star(F;T) = \diam(T)^{1/2}\norm{F-AU_\star(F)}{L^2(T)}.
\end{align} 
Recently, convergence of Algorithm~\ref{algorithm} has been shown 
even with quasi-optimal rates, if $\eta_\ell(F)=\mu_\ell(F)$ is used for 
marking~\eqref{eq:doerfler}, see~\cite{gantumur,ffkmp:part2}. We stress that our approach
with $\eta_\ell(F) = \rho_\ell(F) = \mu_\ell(F)$ would also give convergence 
$\eta_\ell(F) \to 0$ as $\ell\to\infty$. Since this is, however, a much weaker
result than that of~\cite{gantumur}, we omit the details.

\subsection{Two-level error estimator}
\label{section:hypsing:twolevel}
Let $\widehat\TT_\star$ denote the uniform refinement of $\TT_\star$.
Let $\widehat\NN_\star$ be the corresponding set of nodes.
Let $z_{T,j}\in T\cap\widehat\NN_\star$, $j=1,\ldots,D$ denote the new nodes of
the uniform refinement $\widehat \TT_\star$ within $T$.
 Let $\{v_{T,1},\dots,v_{T,D}\}\subset \SS^1(\widehat\TT_\star)$ denote the fine-mesh hat functions
which satisfy $v_{T,j}(z_{T,j})=1$ and $v_{T,j}(z)=0$ for all 
$z\in\widehat\NN_\star\backslash\{z_{T,j}\}$.
 We note 
that (in dependence of the chosen mesh-refinement) usually $D=1$ for $d=2$ and $D=3$ for $d=3$. 
In this setting, the two-level error estimator has first been proposed
by~\cite{ms}. 
Its local contributions read
\begin{align}\label{eq:hypsing:twolevel}
\mu_\star(F;T)^2 = \sum_{j=1}^D\mu_{\star,j}(F;T)^2
\quad\text{with}\quad
\mu_{\star,j}(F;T) = \begin{cases}
\frac{\dual{F-AU_\star(F)}{v_{T,j}}}{\dual{Av_{T,j}}{v_{T,j}}^{1/2}}
&\text{for }z_{T,j}\notin\partial\Gamma,\\
0&\text{otherwise}.
\end{cases}
\end{align}
Put differently, we test the residual $F-AU_\star(F)\in H^{-1/2}(\Gamma)$ 
with the additional basis functions from 
$\widetilde \SS^1(\widehat\TT_\star)\backslash\widetilde \SS^1(\TT_\star)$.
This quantity is appropriately scaled by the corresponding energy norm 
$\norm{v_{T,j}}{H^{-1/2}(\Gamma)}\simeq\dual{Av_{T,j}}{v_{T,j}}^{1/2}=\norm{v_{T,j}}A$.
Note that unlike the weighted-residual error estimator $\eta_\star(\cdot)$,  the two-level
error estimator $\mu_\star(F)$ is well-defined under minimal regularity 
$F\in H^{-1/2}(\Gamma)$ of the given right-hand side.

The two-level estimator $\mu_{\star}(\cdot)$ is known to be efficient~\cite{ms,mms,hms,eh,efgp,hypsing3d}
\begin{align}\label{eq:twolevel:efficient:hyp}
 \mu_\star(F) \le \Ceff\,\norm{u(F)-U_\star(F)}{\H^{1/2}(\Gamma)},
\end{align}
while reliability
\begin{align}\label{eq:twolevel:reliable:hyp}
 \norm{u(F)-U_\star(F)}{\H^{1/2}(\Gamma)} \le \Crel\,\mu_\star(F) 
\end{align}
holds under~\cite{ms,mms,hms,eh} and is even equivalent to~\cite{efgp,hypsing3d} the saturation assumption
\begin{align}\label{eq:hypsing:saturation}
\norm{u(F)-\widehat U_\star(F)}A \le q_{\rm sat}\,\norm{u(F)-U_\star(F)}A
\end{align}
in the energy norm $\norm\cdot{A}\simeq\norm\cdot{\H^{1/2}(\Gamma)}$.
Here, $0<q_{\rm sat}<1$ is a uniform constant, and $\widehat U_\star(F)$ is the Galerkin
solution with respect to the uniform refinement $\widehat\TT_\star$ of $\TT_\star$.
The constant $\Ceff>0$ depends only on $\Gamma$ and $\gamma$-shape regularity of
$\TT_\star$, while $\Crel>0$ additionally depends on the saturation constant 
$q_{\rm sat}$.
(The saturation assumption~\eqref{eq:hypsing:saturation} for the $\H^{1/2}$-norm
$\norm\cdot{A} = \norm\cdot{\H^{1/2}/(\Gamma)}$ implies 
reliability~\eqref{eq:twolevel:reliable:hyp}, but is \emph{not} necessary though.)


\begin{theorem}\label{thm:hypsing:twolevel}
Suppose that the two-level error estimator~\eqref{eq:hypsing:twolevel} is used
for marking~\eqref{eq:doerfler}. Suppose that the mesh-refinement
guarantees uniform $\gamma$-shape regularity of the meshes $\TT_\ell$ generated,
as well as that all marked elements $T\in\MM_\ell$ are refined into sons 
$T'\in\TT_{\ell+1}$ with $|T'|\le \kappa\,|T|$ with some uniform constant $0<\kappa<1$.
Then, Algorithm~\ref{algorithm} guarantees 
\begin{align}\label{hypsing:convergence}
 \mu_\ell(F)\to0 
 \quad\text{as }\ell\to\infty.
\end{align}
for all $F\in H^{1/2}(\Gamma)$.
\end{theorem}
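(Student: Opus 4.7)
The plan is to invoke Proposition~\ref{prop:new}. As the auxiliary estimator I take a slight variant of the weighted-residual estimator~\eqref{eq:hypsing:eta},
$$\rho_\ell(F;T) := \norm{\widetilde h_\ell^{1/2}(F-AU_\ell(F))}{L^2(T)},$$
where $\widetilde h_\ell$ is the modified, shape-regularity-equivalent mesh-width function from~\eqref{eq:mannomann} (with a fixed patch parameter $k\ge 1$) that contracts by a uniform factor $0<q<1$ on the $k$-patch of every refined element. Since $\rho_\ell(F)$ is well-defined as soon as $F\in L^2(\Gamma)$, I take the dense subset of Proposition~\ref{prop:new} to be $D = L^2(\Gamma)\subset H^{-1/2}(\Gamma)$; this contains $H^{1/2}(\Gamma)$. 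It remains to verify assumptions~\eqref{ass:local}, \eqref{ass:contraction}, and~\eqref{ass:stable} for this choice.

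For~\eqref{ass:local}, fix $T\in\MM_\ell$ and a new fine-mesh node $z_{T,j}\notin\partial\Gamma$. The hat function $v_{T,j}$ is supported in a subset $\omega\subseteq\omega_\ell(T)$ of the one-layer node patch. Cauchy--Schwarz gives
$$\mu_{\ell,j}(F;T) = \frac{|\dual{F-AU_\ell(F)}{v_{T,j}}|}{\norm{v_{T,j}}A} \le \norm{h_\ell^{1/2}(F-AU_\ell(F))}{L^2(\omega)}\,\frac{\norm{h_\ell^{-1/2}v_{T,j}}{L^2(\omega)}}{\norm{v_{T,j}}A}.$$
Using $\norm{v_{T,j}}A\simeq\norm{v_{T,j}}{H^{1/2}(\Gamma)}$ together with the standard $\gamma$-shape-regular scalings $\norm{v_{T,j}}{L^2(\Gamma)}^2\simeq\diam(T)^{d-1}$ and $\norm{v_{T,j}}{H^{1/2}(\Gamma)}^2\simeq\diam(T)^{d-2}$, which can be established along the lines of the inverse/approximation estimates of~\cite{ghs,ccdpr:symm} used in the proof of Theorem~\ref{thm:symm:twolevel}, the quotient on the right is uniformly bounded. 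Summing over $T\in\MM_\ell$ and $j=1,\ldots,D$ yields~\eqref{ass:local} with $\RR_\ell := \omega_\ell(\MM_\ell)$.

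For~\eqref{ass:contraction}, I proceed exactly as in Theorem~\ref{thm:symm:twolevel}. Since every $T\in\MM_\ell$ is refined, $\RR_\ell\subseteq\omega_\ell^k(\TT_\ell\setminus\TT_{\ell+1})$, and~\eqref{eq:mesh-width-contractivity} gives $\widetilde h_{\ell+1}|_T\le q\,\widetilde h_\ell|_T$ pointwise on $\bigcup\RR_\ell$. Combined with a triangle inequality and Young's inequality, this yields
$$(1-q)\rho_\ell(F;\RR_\ell)^2\le\rho_\ell(F)^2-\tfrac{1}{1+\delta}\rho_{\ell+k}(F)^2+\tfrac{1+\delta^{-1}}{1+\delta}\,\norm{\widetilde h_{\ell+k}^{1/2}A(U_{\ell+k}(F)-U_\ell(F))}{L^2(\Gamma)}^2.$$
The decisive input is the inverse estimate
$$\norm{h_\ell^{1/2}AV_\ell}{L^2(\Gamma)}\lesssim\norm{V_\ell}{H^{1/2}(\Gamma)}\quad\text{for all }V_\ell\in\widetilde\SS^1(\TT_\ell),$$
i.e.\ the hypersingular analog of~\eqref{eq:novel:invest}, which is established in~\cite{afembem} with a constant depending only on $\Gamma$ and $\gamma$-shape regularity. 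Applied to $V_\ell = U_{\ell+k}(F)-U_\ell(F)\in\widetilde\SS^1(\TT_{\ell+k})$, this closes~\eqref{ass:contraction} with $\c{inv} = \max\{\Cinv,(1-q)^{-1}\}$.

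Assumption~\eqref{ass:stable} is immediate from the abstract argument~\eqref{eq:18}: $A$ is linear and $\mu_\ell$ is unconditionally efficient by~\eqref{eq:twolevel:efficient:hyp}, so one obtains $\c{stable} = \Ceff\c{cea}\norm{A^{-1}}{}$. The main obstacle in this program is verifying that the inverse estimate $\norm{h_\ell^{1/2}AV_\ell}{L^2(\Gamma)}\lesssim\norm{V_\ell}{H^{1/2}(\Gamma)}$ for the hypersingular operator on $\widetilde\SS^1(\TT_\ell)$ is indeed available with a shape-regularity-dependent constant; the hypersingular kernel is more singular than the weakly-singular one, but the commutator/Newton-potential techniques from~\cite{afembem} deliver precisely this bound. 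With~\eqref{ass:local}--\eqref{ass:stable} in place, Proposition~\ref{prop:new} yields $\mu_\ell(F)\to0$ for every $F\in H^{-1/2}(\Gamma)$, and hence in particular for every $F\in H^{1/2}(\Gamma)$, proving~\eqref{hypsing:convergence}.
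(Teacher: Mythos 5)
Your proposal is correct and follows essentially the same route as the paper: the same modified mesh-width function and auxiliary estimator $\rho_\ell(F;T)=\norm{\widetilde h_\ell^{1/2}(F-AU_\ell(F))}{L^2(T)}$, the same inverse estimate from~\cite[Corollary~3]{afembem} for~\eqref{ass:contraction}, the same efficiency-based argument~\eqref{eq:18} for~\eqref{ass:stable}, and the conclusion via Proposition~\ref{prop:new} with density of $L^2(\Gamma)$ in $H^{-1/2}(\Gamma)$. The only (harmless) deviation is that you verify the local bound~\eqref{ass:local} directly by Cauchy--Schwarz and hat-function scaling, where the paper simply cites~\cite[Theorem~5.4]{cmps}.
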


\begin{proof}
With 
Proposition~\ref{prop:new}, it remains to verify the abstract assumptions~\eqref{ass:local}--\eqref{ass:stable}.

\next 
We use the modified mesh-width function $\widetilde h_\ell$ from the proof of 
Theorem~\ref{thm:faermann-convergence} and define the modified weighted-residual
error estimator
\begin{align}\label{eq:hypsing:wuerg}
 \c{meshsize}^{-1/2}\,\eta_\ell(F;T)
 \le \rho_\ell(F;T) 
 := \norm{\widetilde h_\ell^{1/2}(F-AU_\ell(F))}{L^2(T)}
 \le \eta_\ell(F;T).
\end{align}
Arguing analogously to the proof of 
Theorem~\ref{thm:symm:twolevel}, we verify contraction~\eqref{ass:contraction}.
The only difference is that instead of~\eqref{eq:novel:invest}, we  use the
inverse-type
estimate
\begin{align}\label{eq:novel:invest:2}
 \norm{h_\ell^{1/2}AV_\ell}{L^2(\Gamma)}
 \le \Cinv\, \norm{V_\ell}{\H^{1/2}(\Gamma)}
 \text{for all }V_\ell\in\widetilde\SS^1(\TT_\ell),
\end{align}
where the constant $\Cinv>0$ depends only on $\Gamma$ and $\gamma$-shape regularity
of $\TT_\ell$; see~\cite[Corollary~3]{afembem}.

\next 
It is proved in~\cite[Theorem~5.4]{cmps} that
\begin{align*}
 \mu_{\ell,j}(F;T) \lesssim \norm{h_\ell^{1/2}(F-AU_\ell)}{L^2(\supp(v_{T,j}))},
\end{align*}
where the hidden constant depends only on $\Gamma$ and $\gamma$-shape regularity of $\TT_\ell$. By definition~\eqref{eq:hypsing:eta} 
of the weighted-residual error estimator and~\eqref{eq:hypsing:wuerg}, this implies 
\begin{align*}
 \mu_\ell(F;T)^2 \lesssim \sum_{{T'\in\TT_\ell}\atop{T'\cap T\neq\emptyset}}\eta_\ell(F;T)^2
 \simeq\sum_{{T'\in\TT_\ell}\atop{T'\cap T\neq\emptyset}}\rho_\ell(F;T)^2.
\end{align*}
Using the notation from the proof of Theorem~\ref{thm:faermann-convergence}, this 
yields~\eqref{ass:local} with $\RR_\ell := \omega_\ell(\MM_\ell)$ being the
marked elements plus one additional layer of elements;
see~\eqref{eq:kpatch} for the definition of $\omega_\ell(\cdot) = \omega_\ell^1(\cdot)$.

\next 
Finally, stability~\eqref{ass:stable} follows from efficiency~\eqref{eq:twolevel:efficient:hyp}; see~\eqref{eq:18}.
\end{proof}

\subsection{Remarks and Extensions}
\label{section:hypsing:remarks}
%
The inverse estimate~\eqref{eq:novel:invest:2} of~\cite[Corollary~3]{afembem} also applies to higher-order 
discretizations $\widetilde\SS^p(\TT_\star):=\PP^p(\TT_\star)\cap\H^{1/2}(\Gamma)$ 
with piecewise polynomials of degree $p\ge1$ and curved surface triangles. 
Consequently, the convergence results of Theorem~\ref{thm:symm:twolevel}
and Theorem~\ref{thm:faermann-convergence} also transfer to these settings. 
Moreover, also rectangular elements can be covered.

If the boundary $\Gamma$ is closed, i.e.\ $\Gamma=\partial\Omega$, the hypersingular
operator $W:H^{1/2}_0(\Gamma)\to H^{-1/2}_0(\Gamma)$ is well-defined and elliptic,
where $H^{\pm1/2}_0(\Gamma)=\set{v\in H^{\pm1/2}(\Gamma)}{\dual{v}{1} = 0}$.
Therefore, well-posedness of~\eqref{eq:hypsing} requires the compatibility condition
$F\in H^{-1/2}_0(\Gamma)$. On the one hand, one may formulate the weak formulation 
of~\eqref{eq:hypsing} as well as its Galerkin discretization with respect to the 
subspaces $\HH=H^{1/2}_0(\Gamma)$ and $\XX_\star = \PP^p(\TT_\star)\cap H^{1/2}_0(\Gamma)$.
On the other hand, one can choose the full space $\HH = H^{1/2}(\Gamma)$ 
and $\XX_\star = \PP^p(\TT_\star)\cap H^{1/2}(\Gamma)$
and consider
the naturally stabilized formulation
\begin{align}\label{eq:hypsing:stabilized}
 a(u,v) := \dual{Au}{v} + \dual{u}{1}\dual{v}{1} = \dual{F}{v}
 \quad\text{for all }v\in \HH = H^{1/2}(\Gamma).
\end{align}
The compatibility condition on $F$ and 
$1\in\SS^1(\TT_\star) = \PP^1(\TT_\star)\cap H^{1/2}(\Gamma)$ ensure that
both, the exact solution $u=u(F)\in H^{1/2}(\Gamma)$ of~\eqref{eq:hypsing:stabilized}
as well as the Galerkin approximation $U_\star=U_\star(F)\in\SS^1(\TT_\star)$, satisfy
$\dual{u(F)}{1}=0=\dual{U_\star(F)}{1}$, i.e., $u(F)\in H^{1/2}_0(\Gamma)$ as well as
$U_\star(F) \in \PP^1(\TT_\star)\cap H^{1/2}_0(\Gamma)$. In either case, the 
weighted-residual error estimator coincides with~\eqref{eq:hypsing:eta} and the
two-level error estimator is obtained analogously to Section~\ref{section:hypsing:twolevel}. 
For the two-level error estimator, we refer, e.g., to~\cite{efgp} for the $H^{1/2}_0(\Gamma)$-based discretization and to~\cite{hypsing3d} for the stabilized approach.
In any case, Theorem~\ref{thm:hypsing:twolevel} holds 
accordingly.

%
\def\UU{\boldsymbol{U}}
\def\uu{\boldsymbol{u}}
\def\vv{\boldsymbol{v}}
\def\gg{\boldsymbol{g}}
\def\ff{\boldsymbol{f}}
\def\ww{\boldsymbol{w}}
\def\VV{\boldsymbol{V}}
\def\WW{\boldsymbol{W}}

\def\proj{P} 
\def\linhull{\mathrm{span}}

\def\revtfblue#1{{\color{blue}{#1}}}

\section{FEM-BEM Coupling}
\label{section:fembem}

\subsection{Model problem}
Let $\Omega \subset \R^d$ be a Lipschitz domain with polygonal boundary $\Gamma := \partial
\Omega$, $d=2,3$.
Let $B:\R^d\to\R^d$ be Lipschitz continuous
\begin{align}\label{eq:fembem:lip}
|Bx - By| \le \c{lipB} |x-y|
\quad\text{for all }x,y\in\R^d
\end{align}
for some $\setc{lipB}>0$.
In addition, we assume that the induced operator
$B : L^2(\Omega)^d \to L^2(\Omega)^d$, $(B\ff)(x) := B(\ff(x))$ is strongly monotone
\begin{align}\label{eq:fembem:mon}
  \int_\Omega (B\ff -B\gg)\cdot(\ff-\gg) \,d\Omega \geq \c{monB} \norm{\ff-\gg}{L^2(\Omega)}^2 \quad\text{for all }
  \ff,\gg\in L^2(\Omega)^d
\end{align}
with monotonicity constant $\setc{monB}>1/4$. 
(Arguing as in~\cite{os}, this assumption can be sharpened to 
$\c{monB}>q_{\dlp}/4$, where $1/2\le q_{\dlp}<1$ is the contraction constant of
the double-layer integral operator.)
We consider a possibly nonlinear 
Laplace transmission 
problem which is reformulated in terms of the Johnson-N\'ed\'elec FEM-BEM coupling~\cite{johned}: For given data
$(f,u_0,\phi_0) \in L^2(\Omega) \times H^{1/2}(\Gamma) \times H^{-1/2}(\Gamma)$, find $\uu =
(u,\phi)\in \HH := H^1(\Omega) \times H^{-1/2}(\Gamma)$ such that
\begin{subequations}\label{eq:fb:varform}
\begin{align}
  \int_\Omega B\nabla u \cdot \nabla v \,d\Omega - \int_\Gamma \phi v \,d\Gamma &=
  \int_\Omega f v \,d\Omega + \int_\Gamma \phi_0 v \,d\Gamma, \\
  \int_\Gamma \big((1/2-\dlp)u + \slp \phi\big)\psi \,d\Gamma &= \int_\Gamma (1/2-\dlp)u_0 \psi\,d\Gamma
\end{align}
\end{subequations}
for all $\vv=(v,\psi)\in\HH$. Here, $\slp \psi(x) := \int_\Gamma G(x-y) \psi(y)\,d\Gamma(y)$ is the simple-layer integral
operator and $\dlp v(x) := \int_\Gamma \partial_{\normal(y)} G(x-y)
v(y) \,d\Gamma(y)$ is the double-layer integral operator, with $G$ being the fundamental solution~\eqref{eq:G} of the Laplacian.
To ensure ellipticity of $\slp:H^{-1/2}(\Gamma)\to H^{1/2}(\Gamma)=(H^{-1/2}(\Gamma))^*$, we assume $\diam(\Omega)<1$ for $d=2$ by scaling;
see also Section~\ref{section:symm}.
Let $\norm{\vv}\HH^2 := \norm{v}{H^1(\Omega)}^2 + \norm{\psi}{H^{-1/2}(\Gamma)}^2$ for $\vv = (v,\psi)\in\HH$ denote the
canonical product norm on $\HH$.

The left-hand side of~\eqref{eq:fb:varform} gives rise to some operator 
$A : \HH\to \HH^*$.
The right-hand side of~\eqref{eq:fb:varform} gives rise to some $F\in\HH^*$ which
depends on the given data $f,u_0,\phi_0$. Then, \eqref{eq:fb:varform} can equivalently be reformulated
by~\eqref{eq:galerkin} with $\XX_\star = \HH$.
Note that $\dual{\phi}{\psi}_\slp := \int_\Gamma \psi\slp\phi\,d\Gamma$ defines a scalar product on $H^{-1/2}(\Gamma)$ with induced norm
$\norm\cdot\slp^2:=\dual\cdot\cdot_\slp$.
The following proposition states that 
the FEM-BEM formulation~\eqref{eq:fb:varform} fits into the abstract frame of 
Section~\ref{section:abstract}.

\begin{proposition}\label{prop:fembem}
The operator $A : \HH\to \HH^*$ associated with the left-hand side 
of~\eqref{eq:fb:varform} is bi-Lipschitz continuous~\eqref{eq:cont},
where $\Ccont>0$ depends only on $\c{lipB}$, $\c{monB}$, and $\Omega$.
Let $F\in\HH^*$ and let $\XX_\star$ be a closed subspace of $\HH$.
Provided that $(0,1)\in\XX_\star$, i.e.\ $\XX_{00}={\rm span}\{(0,1)\}$,
the variational formulation~\eqref{eq:galerkin} 
admits a unique solution $\U_\star(F)=(U_\star(F),\Phi_\star(F))\in\XX_\star$, and there holds the
C\'ea lemma~\eqref{eq:cea}. The constant $\c{cea}>0$ depends only on 
$\c{lipB}$, $\c{monB}$, and $\Omega$.
\end{proposition}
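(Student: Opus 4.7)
The plan is to verify the three ingredients of the claim in sequence: the bi-Lipschitz continuity~\eqref{eq:cont} of $A$, unique solvability of the Galerkin problem on closed subspaces containing $(0,1)$, and the C\'ea estimate~\eqref{eq:cea}. For $\uu=(u,\phi),\vv=(v,\psi),\ww=(w,\eta)\in\HH$, the definition of $A$ gives
\begin{align*}
\dual{A\uu-A\vv}{\ww}=\int_\Omega(B\nabla u-B\nabla v)\cdot\nabla w\,d\Omega-\int_\Gamma(\phi-\psi)w\,d\Gamma+\int_\Gamma\bigl((1/2-\dlp)(u-v)+\slp(\phi-\psi)\bigr)\eta\,d\Gamma,
\end{align*}
and the Lipschitz upper bound of~\eqref{eq:cont} follows routinely from~\eqref{eq:fembem:lip}, the trace theorem $H^1(\Omega)\to H^{1/2}(\Gamma)$, and the mapping properties $\slp:H^{-1/2}(\Gamma)\to H^{1/2}(\Gamma)$ and $\dlp:H^{1/2}(\Gamma)\to H^{1/2}(\Gamma)$, with constants depending only on $\c{lipB}$ and $\Omega$.

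The main obstacle is the matching lower bound, for which I would establish strong monotonicity of $A$. Testing the identity above with $\ww=\uu-\vv=(w,\eta)$, the two $\eta w$-contributions combine to yield
\begin{align*}
\dual{A\uu-A\vv}{\uu-\vv}=\int_\Omega(B\nabla u-B\nabla v)\cdot\nabla w\,d\Omega+\dual{\slp\eta}{\eta}-\dual{(1/2+\dlp)w}{\eta}.
\end{align*}
Strong monotonicity of $B$~\eqref{eq:fembem:mon} bounds the volume term below by $\c{monB}\,\norm{\nabla w}{L^2(\Omega)}^2$, and ellipticity of the simple-layer operator gives $\dual{\slp\eta}{\eta}=\norm{\eta}{\slp}^2\simeq\norm{\eta}{H^{-1/2}(\Gamma)}^2$. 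The critical cross term is controlled by the Steinbach--Wendland contraction estimate $|\dual{(1/2+\dlp)w}{\eta}|\le q_\dlp\,\norm{w}{\slp^{-1}}\norm{\eta}{\slp}$ with $q_\dlp<1$ (see e.g.~\cite{afkp:fembem}), followed by Young's inequality and a trace-plus-Poincar\'e argument to dominate $\norm{w}{\slp^{-1}}$ by $\norm{\nabla w}{L^2(\Omega)}$. The hypothesis $\c{monB}>1/4\ge q_\dlp/4$ is exactly what allows us to absorb the cross term, producing
\begin{align*}
\dual{A\uu-A\vv}{\uu-\vv}\ge\Cmon\,\norm{\uu-\vv}{\HH}^2
\end{align*}
with $\Cmon>0$ depending only on $\c{lipB},\c{monB}$, and $\Omega$; combined with the upper Lipschitz bound this also yields the lower bound in~\eqref{eq:cont}.

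Since strong monotonicity is inherited by every closed subspace $\XX_\star$, the Browder--Minty / Zarantonello theorem (see e.g.~\cite[Section~25.4]{zeidler}) provides a unique Galerkin solution $\U_\star(F)\in\XX_\star$ and, specialising to $\XX_\star=\HH$, a unique solution $\uu=\uu(F)\in\HH$ of~\eqref{eq:continuous}; the inclusion $(0,1)\in\XX_{00}$ is invoked here as the standard compatibility assumption that keeps the constant mode of the boundary flux accessible on the subspace. The C\'ea estimate is then obtained by the usual manipulation: for arbitrary $\VV_\star\in\XX_\star$, Galerkin orthogonality $\dual{A\uu-A\U_\star}{\VV_\star-\U_\star}=0$ together with strong monotonicity and Lipschitz continuity gives
\begin{align*}
\Cmon\,\norm{\uu-\U_\star}{\HH}^2\le\dual{A\uu-A\U_\star}{\uu-\VV_\star}\le\Ccont\,\norm{\uu-\U_\star}{\HH}\norm{\uu-\VV_\star}{\HH},
\end{align*}
so that~\eqref{eq:cea} follows with $\c{cea}=\Ccont/\Cmon$ after division and infimum over $\VV_\star$.
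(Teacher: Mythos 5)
Your Lipschitz upper bound is fine and agrees with the paper, but the core of your argument --- strong monotonicity of $A$ on $\HH$ tested with $\uu-\vv$ --- is false for the Johnson--N\'ed\'elec coupling, and with it the Browder--Minty route to solvability and your C\'ea argument collapse. Take $\uu-\vv=(c,0)$ with a constant $c\neq 0$: then $\nabla(u-v)=0$ annihilates the volume term, and $\phi-\psi=0$ kills both the $\slp$-term and the cross term, so your own identity gives $\dual{A\uu-A\vv}{\uu-\vv}=0$ while $\norm{\uu-\vv}{\HH}>0$. The same example breaks the auxiliary step you invoke: there is no ``trace-plus-Poincar\'e'' bound of $\norm{w}{\slp^{-1}}$ (nor of $\norm{w}{L^2(\Omega)}$) by $\norm{\nabla w}{L^2(\Omega)}$ for a generic $w\in H^1(\Omega)$, since $w$ has neither zero mean nor zero trace. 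Hence no sharper absorption and no appeal to $\c{monB}>1/4$ can rescue coercivity in the form you claim; the defect in the direction $(c,0)$ is structural. Note also that even away from constants your estimate never controls the $L^2(\Omega)$-part of the $H^1(\Omega)$-norm.

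This is precisely why the paper argues differently: the lower bound in~\eqref{eq:cont} is obtained not by testing with $\uu-\vv$ but with the implicitly stabilized test function $\ww=\uu-\vv+(0,1)\int_\Gamma\big((1/2-\dlp)(u-v)+\slp(\phi-\psi)\big)\,d\Gamma$, for which $\norm{\ww}{\HH}\lesssim\norm{\uu-\vv}{\HH}$ and, by the implicit-stabilization theory of~\cite{affkmp:fembem} (this is where $\c{monB}>1/4$ enters), $\dual{A\uu-A\vv}{\ww}\gtrsim\norm{\uu-\vv}{\HH}^2$; this yields the bi-Lipschitz (inf-sup-type) bound rather than monotonicity. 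The hypothesis $(0,1)\in\XX_\star$ is exactly what makes this stabilized test function available at the discrete level --- it is not a harmless compatibility remark, and the fact that your argument would render it superfluous is itself a symptom of the gap. Unique solvability of the Galerkin systems and the C\'ea estimate are then quoted from~\cite{affkmp:fembem} (see also~\cite{sayas09} for the linear case), not derived from plain strong monotonicity on subspaces.
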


\begin{proof}[Sketch of proof]
The statements on unique solvability and C\'ea-type quasi-optimality are proved 
in~\cite{affkmp:fembem}; see also~\cite{sayas09} for the linear Laplace transmission 
problem, where $B$ is the identity. It only remains to show that $A$ is bi-Lipschitz.
The upper bound in~\eqref{eq:cont} follows from Lipschitz 
continuity~\eqref{eq:fembem:lip} of $B$ and the continuity of the boundary 
integral operators 
$\slp: H^{-1/2}(\Gamma)\to H^{1/2}(\Gamma)$ and 
$\dlp : H^{1/2}(\Gamma) \to H^{1/2}(\Gamma)$.
For the lower bound in~\eqref{eq:cont}, we use the definition of the dual norm
  \begin{align*}
    \norm{A\uu-A\vv}{\HH^*} = \sup\limits_{\ww=(w,\chi)\in\HH\backslash\{(0,0)\}} \frac{|\dual{A\uu-A\vv}{\ww}|}{\norm{\ww}\HH}
  \end{align*}
 For $\uu=(u,\phi), \vv = (v,\psi) \in\HH$, we 
 choose $\ww = \uu-\vv + (0,1)\int_\Gamma (\tfrac12-\dlp)(u-v) + \slp(\phi-\psi)
  \,d\Gamma$. 
    By continuity of $\slp$ and $\dlp$, it follows $\norm{\ww}\HH \lesssim \norm{\uu-\vv}\HH$,
  where the hidden constant depends only on $\Omega$. Moreover,
$\ww=(0,0)$ implies that $u=v$ and $\phi-\psi=-\dual{\phi-\psi}{1}_\slp=:c\in\R$ is 
constant. With this identity, it follows $0=(1+\dual{1}{1}_\slp)c$.
Ellipticity of 
$\slp$ proves $0=c=\phi-\psi$, i.e., $\ww=0$ yields
  $\uu=\vv$.

  The theory of implicit stabilization provided in~\cite{affkmp:fembem} shows $\dual{A\uu-A\vv}{\ww} \gtrsim
  \norm{\uu-\vv}\HH^2$, where the hidden constant depends only on $\c{monB}$,
  and $\Omega$. For $\uu\neq\vv$, we altogether obtain 
 ${|\dual{A\uu-A\vv}{\ww}|}/{\norm{\ww}\HH} \geq \Ccont^{-1} \norm{\uu-\vv}\HH$, where $\Ccont>0$ depends only
  on $\c{lipB},\c{monB}$,
  and $\Omega$.
\end{proof}



\subsection{Discretization}
Let $\TT_\star^\Omega$ be a $\gamma$-shape regular triangulation of $\Omega$ into triangles for $d=2$
resp. tetrahedrons for $d=3$.
Here, $\gamma$-shape regularity means
\begin{align}
  \sup\limits_{T\in\TT_\star^\Omega} \frac{\diam(T)^d}{|T|} \leq \gamma < \infty
\end{align}
with $|\cdot|$ being the $d$-dimensional volume measure.
Suppose that $\TT_\star^\Omega$ is regular in the sense of Ciarlet, i.e., $\TT_\star^\Omega$ admits no hanging nodes.
Let $\TT_\star^\Gamma := \TT_\star^\Omega|_\Gamma$ be the triangulation of $\Gamma$ which is induced by
$\TT_\star^\Omega$. Note that $\TT_\star^\Gamma$ then is $\widetilde\gamma$-shape regular in the sense
of~\eqref{eq:shaperegular}, where $\widetilde\gamma>0$ depends only on $\gamma$.
Moreover, for $d=3$, $\TT_\star^\Gamma$ is regular in the sense of Ciarlet as well.
We formally consider $\TT_\star := \TT_\star^\Omega \cup \TT_\star^\Gamma$ with the abstract notation of
Section~\ref{section:abstract}.
Let $\SS^1(\TT_\star^\Omega)$ be the space of piecewise affine, globally continuous functions on $\TT_\star^\Omega$
and $\PP^0(\TT_\star^\Gamma)$ be the space of all $\TT_\star^\Gamma$-piecewise constant functions.
With $\XX_\star := \SS^1(\TT_\star^\Omega) \times \PP^0(\TT_\star^\Gamma)$, we now consider the Galerkin 
formulation~\eqref{eq:galerkin}.
The discrete solution with respect to $\XX_\star$ will be denoted by $\UU_\star = (U_\star,\Phi_\star)$.

\subsection{Weighted-residual error estimator}
Assume additional regularity $(f,u_0,\phi_0) \in L^2(\Omega) \times H^{1}(\Gamma) \times L^2(\Gamma)$.
Following~\cite{cs95:fembem}, it is proved in~\cite{afkp:fembem} for linear problems and 
in~\cite{affkmp:fembem} for strongly monotone problems that 
\begin{align}
  \norm{\uu(F) - \UU_\star(F)}\HH \simeq \norm{F-A\UU_\star(F)}{\HH^*} \lesssim \eta_\star (F),
\end{align}
where the error estimator $\eta_\star(F)^2 := \sum_{T\in\TT_\star} \eta_\star(F;T)^2$
is defined by
\begin{subequations}\label{eq:fembem:eta}
\begin{align}
\begin{split}
  \eta_\star(F;T)^2 &:= \diam(T)^2\, \norm{f}{L^2(T)}^2
  \\& +\! \diam(T)\, \Big(\norm{[B\nabla U_\star\cdot\normal]}{L^2(\partial T\backslash \Gamma)}^2
  +\norm{\phi_0+\Phi_\star - B\nabla U_\star\cdot\normal}{L^2(\partial T\cap\Gamma)}^2
  \Big)
\end{split}
\intertext{for $T\in\TT_\star^\Omega$ resp.}
  \eta_\star(F;T)^2 &:= \diam(T) \,\norm{ \nabla_\Gamma\big((1/2-\dlp)(U_\star-u_0) + \slp \Phi_\star\big)}{L^2(T)}^2 
\end{align}
\end{subequations}
for $T\in\TT_\star^\Gamma$.
Here, $[B\nabla U_\star \cdot\normal]$ denotes the jump of $B\nabla U \cdot \normal$ across interior facets $E$, where
$E = T_+\cap T_-$ for some $T_+,T_-\in\TT_\star^\Omega$ with $T_+\neq T_-$.
By means of the estimator reduction principle~\cite{afp}, it follows that Algorithm~\ref{algorithm} converges for
$\eta_\ell(F) = \mu_\ell(F)$; see~\cite{affkmp:fembem}.

\subsection{Two-level error estimator}
Two-level error estimators for the adaptive coupling of FEM and BEM have first been proposed in~\cite{ms:fembem}.
Let $\widehat\TT_\star^\Omega$ denote the uniform refinement of $\TT_\star^\Omega$. 
Let $\widehat\NN_\star^\Omega$ be the corresponding set of nodes
and $\widehat\TT_\star^\Gamma:=\widehat\TT_\star^\Omega|_\Gamma$ be the induced
triangulation of $\Gamma$.
For each element
$T\in\TT_\star^\Omega$, let $z_{T,j}\in T\cap\widehat\NN_\star^\Omega$, $j=1,\dots,D^\Omega$ denote the new nodes of the uniform refinement
$\widehat\TT_\star^\Omega$ within $T$.
Let $v_{T,j} \in \SS^1(\widehat\TT_\star^\Omega)$ denote the fine-mesh hat functions, which satisfy
$v_{T,j}(z_{T,j}) = 1$ and $v_{T,j}(z) = 0$ for all $z\in\widehat\NN_\star^\Omega\backslash\{z_{T,j}\}$.
Moreover, let $\{\chi_T,\psi_{T,j},\dots,\psi_{T,D^\Gamma}\}$ denote a basis of $\PP^0(\widehat\TT_\star^\Gamma|_T)$ for
each element $T\in\TT_\star^\Gamma$, with $\chi_T$ being the characteristic function on $T$ and $\int_\Gamma
\psi_{T,j} \,d\Gamma = 0$.
Then, the two-level estimator $\mu_\star^2 := \sum_{T\in\TT_\star} \mu_\star(F;T)^2$ is
defined by
\begin{subequations}\label{eq:fembem:twolevel}
\begin{align}
  \mu_\star(F;T)^2 := \sum_{j=1}^{D^\Omega} \mu_{\star,j}(F;T)^2 \quad\text{with}\quad \mu_{\star,j}(F;T) := \frac{\dual{F-A
  \UU_\star(F)}{(v_{T,j},0)}}{\norm{v_{T,j}}{H^1(\Omega)}}
\end{align}
for $T\in\TT_\star^\Omega$ and
\begin{align}
  \mu_\star(F;T)^2 := \sum_{j=1}^{D^\Gamma} \mu_{\star,j}(F;T)^2 \quad\text{with}\quad \mu_{\star,j}(F;T) := \frac{\dual{F-A
  \UU_\star(F)}{(0,\psi_{T,j})}}{\norm{\psi_{T,j}}\slp}
\end{align}
for $T\in\TT_\star^\Gamma$.
\end{subequations}
Note that unlike the weighted-residual error estimator~\eqref{eq:fembem:eta},
the two-level error estimator~\eqref{eq:fembem:twolevel} does not require additional regularity of the data,
but only $(f,u_0,\phi_0)\in L^2(\Omega)\times H^{1/2}(\Gamma)\times H^{-1/2}(\Gamma)$.

The two-level estimator $\mu_\star$ is known to be efficient
\begin{align}
  \mu_\star(F) \leq \c{eff} \norm{\uu(F)-\UU_\star(F)}{\HH},
\end{align}
while reliability
\begin{align}
  \norm{\uu(F)-\UU_\star(F)}{\HH} \leq \c{rel} \mu_\star(F)
\end{align}
holds under the saturation assumption
\begin{align}
  \norm{\uu(F)-\widehat\UU_\star(F)}\HH \leq q_{\rm sat} \norm{\uu(F)-\UU_\star(F)}\HH; 
\end{align}
see~\cite{afkp:fembem} for the linear Johnson-N\'ed\'elec coupling and
the seminal work~\cite{ms:fembem} for some non-linear symmetric coupling.
Here, $\widehat\UU_\star(F)$ denotes the Galerkin solution with respect to the uniform refinement
$(\widehat\TT_\star^\Omega,\widehat\TT_\star^\Gamma)$ of $(\TT_\star^\Omega,\TT_\star^\Gamma)$, and $0<q_{\rm sat}<1$ is a uniform constant.
The details are left to the reader.

\begin{theorem}\label{thm:fembem:twolevel}
Suppose that the two-level error estimator~\eqref{eq:fembem:twolevel} is used
for marking~\eqref{eq:doerfler}. Suppose that the mesh-refinement
guarantees uniform $\gamma$-shape regularity of the meshes $\TT_\ell^\Omega,\TT_\ell^\Gamma$ generated,
as well as that all marked elements $T\in\MM_\ell \subseteq \TT_\ell^\Omega \cup \TT_\ell^\Gamma$ are refined into sons 
$T'\in\TT_{\ell+1} = \TT_{\ell+1}^\Omega \cup \TT_{\ell+1}^\Gamma$ with $|T'|\le \kappa\,|T|$ with some uniform constant $0<\kappa<1$,
where $|\cdot|$ denotes the $d$-dimensional volume measure for $T\in\TT_\ell^\Omega$ resp.\ the $(d-1)$-dimensional surface measure for $T\in\TT_\ell^\Gamma$.
Then, Algorithm~\ref{algorithm} guarantees 
\begin{align}\label{fembem:convergence}
 \mu_\ell(F)\to0
 \quad\text{as }\ell\to\infty
\end{align}
for all $F\in \HH^*$.
\end{theorem}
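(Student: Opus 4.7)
My plan is to apply Proposition~\ref{prop:new}, thereby reducing the task to the verification of the three abstract assumptions \eqref{ass:local}--\eqref{ass:stable} for the FEM-BEM two-level estimator~\eqref{eq:fembem:twolevel}, in the spirit of Theorems~\ref{thm:symm:twolevel} and~\ref{thm:hypsing:twolevel}. As auxiliary estimator I take the weighted-residual estimator~\eqref{eq:fembem:eta} with every explicit $\diam(T)$-factor replaced by the modified mesh-width function $\widetilde h_\ell$ from~\eqref{eq:mannomann}, built on the joint mesh $\TT_\ell=\TT_\ell^\Omega\cup\TT_\ell^\Gamma$. This requires the additional regularity $(f,u_0,\phi_0)\in L^2(\Omega)\times H^1(\Gamma)\times L^2(\Gamma)$, so $\rho_\ell(F)$ is only defined on the corresponding subset $D\subseteq\HH^*$, which is dense in $\HH^*$; the conclusion for arbitrary $F\in\HH^*$ will then come via~\eqref{ass:stable}.

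For~\eqref{ass:local}, the local bound on the two-level contributions is essentially contained in the efficiency arguments of~\cite{afkp:fembem,affkmp:fembem}: for $T\in\TT_\ell^\Gamma$ I reproduce the scaling step from Theorem~\ref{thm:symm:twolevel}, using the inverse estimate~\cite[Theorem~3.6]{ghs} together with the vanishing integral of $\psi_{T,j}$; for $T\in\TT_\ell^\Omega$, since $U_\ell$ is piecewise affine and hence $B\nabla U_\ell$ is piecewise constant, integration by parts on $\dual{F-A\UU_\ell(F)}{(v_{T,j},0)}$ produces exactly the volume residual, the interior normal jumps of $B\nabla U_\ell\cdot\normal$, and the boundary residual appearing in~\eqref{eq:fembem:eta}, while the inverse scaling $\norm{v_{T,j}}{H^1(\Omega)}\gtrsim\diam(T)^{-1}\norm{v_{T,j}}{L^2(\Omega)}$ supplies the correct mesh-width weights. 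Summation over $j$ and $T\in\MM_\ell$ yields~\eqref{ass:local} with $\RR_\ell:=\omega_\ell(\MM_\ell)$. Assumption~\eqref{ass:contraction} is then obtained verbatim as in the proof of Theorem~\ref{thm:symm:twolevel}: the modified mesh-width contracts by a uniform factor $0<q<1$ on $\RR_\ell$, a Young inequality separates off the Galerkin-correction term, and it remains to bound $\widetilde h_{\ell+k}^{1/2}$-weighted $L^2$-norms of the differences $B\nabla U_\ell-B\nabla U_{\ell+k}$ (together with their interior normal jumps) and $(1/2-\dlp)(U_\ell-U_{\ell+k})+\slp(\Phi_\ell-\Phi_{\ell+k})$ by $\norm{\UU_\ell(F)-\UU_{\ell+k}(F)}\HH$. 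For the FEM terms this follows from the Lipschitz continuity~\eqref{eq:fembem:lip} of $B$ together with standard FEM inverse and trace inequalities, and for the BEM term the inverse estimates of~\cite[Corollary~3]{afembem} applied to $\slp$ and $\dlp$ after tangential differentiation do the job.

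The main obstacle is~\eqref{ass:stable}, because the nonlinearity of $A$ prevents the direct use of the semi-norm argument~\eqref{eq:18}. My strategy is to work at the level of individual hierarchical test functions $\WW_{T,j}\in\{(v_{T,j},0),(0,\psi_{T,j})\}$: the difference satisfies
\[
\mu_{\ell,j}(F;T)-\mu_{\ell,j}(F';T)=\frac{\dual{R}{\WW_{T,j}}}{\norm{\WW_{T,j}}\HH},\qquad R:=(F-F')-(A\UU_\ell(F)-A\UU_\ell(F'))\in\HH^*,
\]
so $R$ plays the role of a virtual residual generated by the perturbation $F-F'$. The Bessel-type Riesz basis estimate for the hierarchical basis that underlies the efficiency proof of the two-level estimator in~\cite{afkp:fembem} is abstract in the functional being tested, and therefore yields
\[
\Big(\sum_{T\in\MM_\ell,\,j}|\mu_{\ell,j}(F;T)-\mu_{\ell,j}(F';T)|^2\Big)^{1/2}\lesssim \norm{R}{\HH^*}.
\]
Bi-Lipschitz continuity~\eqref{eq:cont} of $A$ gives $\norm{R}{\HH^*}\le\norm{F-F'}{\HH^*}+\Ccont\norm{\UU_\ell(F)-\UU_\ell(F')}\HH$, and testing the two Galerkin equations with $\UU_\ell(F)-\UU_\ell(F')\in\XX_\ell$ and invoking~\eqref{eq:cont} on $\XX_\ell$ yields $\norm{\UU_\ell(F)-\UU_\ell(F')}\HH\lesssim\norm{F-F'}{\HH^*}$ with a constant uniform in $\ell$. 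Combining these with the reverse triangle inequality produces~\eqref{ass:stable}, and Proposition~\ref{prop:new} concludes the proof.
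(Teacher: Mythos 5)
Your route is the paper's route: Proposition~\ref{prop:new} with the $\widetilde h_\ell$-weighted residual estimator as $\rho_\ell$ on the dense data class, \eqref{ass:local} by elementwise integration by parts and scaling of $v_{T,j}$ in $\Omega$ plus the Section~\ref{section:symm} argument on $\Gamma$, and \eqref{ass:contraction} as in Theorem~\ref{thm:symm:twolevel} using the Lipschitz continuity of $B$ and the inverse estimates for $\slp$ and $\dlp$ from~\cite{afembem}. Also your treatment of \eqref{ass:stable} via the virtual residual $R=(F-F')-(A\UU_\ell(F)-A\UU_\ell(F'))$ matches the paper; your ``Bessel-type'' bound $\big(\sum_{T,j}|\dual{R}{\WW_{T,j}}|^2/\norm{\WW_{T,j}}{}^2\big)^{1/2}\lesssim\norm{R}{\HH^*}$ is exactly what the paper extracts from the two-level decomposition stability (Lemma~\ref{lemma:decomp:Omega} and Lemma~\ref{lemma:decomp:Gamma}) via a Riesz representation of $R$ on the fine space $\widehat\XX_\ell$, so that step is fine.

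The step that does not hold as you justify it is the uniform discrete stability $\norm{\UU_\ell(F)-\UU_\ell(F')}\HH\lesssim\norm{F-F'}{\HH^*}$. Testing the two Galerkin equations with $\UU_\ell(F)-\UU_\ell(F')\in\XX_\ell$ only gives $\dual{A\UU_\ell(F)-A\UU_\ell(F')}{\UU_\ell(F)-\UU_\ell(F')}=\dual{F-F'}{\UU_\ell(F)-\UU_\ell(F')}$, which is useful only for a strongly monotone operator; the Johnson--N\'ed\'elec coupling operator $A$ is \emph{not} strongly monotone on $\HH$ (the coupling term is indefinite and the form only controls the $H^1$-seminorm), which is precisely why Proposition~\ref{prop:fembem} needs the implicit-stabilization argument and the assumption $\c{monB}>1/4$. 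Nor can \eqref{eq:cont} be ``invoked on $\XX_\ell$'': its lower bound involves $\norm{A\UU_\ell(F)-A\UU_\ell(F')}{\HH^*}$, a supremum over all of $\HH$, whereas the Galerkin relations control only the weaker discrete dual norm, namely $\norm{A\UU_\ell(F)-A\UU_\ell(F')}{\XX_\ell^*}=\norm{F-F'}{\XX_\ell^*}$. What is needed, and what the paper proves, is the discrete bi-Lipschitz property $\norm{\VV_\ell-\widetilde\VV_\ell}\HH\simeq\norm{A\VV_\ell-A\widetilde\VV_\ell}{\XX_\ell^*}$ for $\VV_\ell,\widetilde\VV_\ell\in\XX_\ell$, obtained by rerunning the proof of Proposition~\ref{prop:fembem} with the stabilized test function $\ww_\ell=\VV_\ell-\widetilde\VV_\ell+(0,1)\int_\Gamma\big((\tfrac12-\dlp)(V_\ell-\widetilde V_\ell)+\slp(\Psi_\ell-\widetilde\Psi_\ell)\big)\,d\Gamma$, which is admissible because $(0,1)\in\XX_\ell$. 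With that replacement (and keeping your density argument for the data class), your proof coincides with the paper's.
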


Our proof of Theorem~\ref{thm:fembem:twolevel} requires the following two results, which essentially state stability of two-level
decompositions of the discrete space $\widehat\XX_\ell := \SS^1(\widehat\TT_\ell^\Omega)\times \PP^0(\widehat\TT_\ell^\Gamma)$.
The following lemma is a consequence of~\cite[Theorem 4.1]{harry86} 
and explicitly stated in~\cite[Lemma~3.1]{ms:fembem}. It provides a hierarchical splitting of $\SS^1(\widehat\TT_\ell^\Omega)$.

  \begin{lemma}\label{lemma:decomp:Omega}
  Let $\proj_\ell^\Omega : H^1(\Omega)\to \SS^1(\TT_\ell^\Omega)$ and $\proj_{T,j}^\Omega : H^1(\Omega) \to \linhull\{v_{T,j}\}$ denote the 
  $H^1$-orthogonal projections. 
  For $\widehat V_\ell\in\SS^1(\widehat\TT_\ell^\Omega)$, it then holds
   \begin{align}
    \c{decompOmega}^{-1} \norm{\widehat V_\ell}{H^1(\Omega)}^2 \leq \norm{\proj_\ell^\Omega \widehat V_\ell}{H^1(\Omega)}^2 +
    \sum_{T\in\TT_\ell^\Omega} \sum_{j=1}^{D^\Omega} \norm{\proj_{j,T}^\Omega \widehat V_\ell}{H^1(\Omega)}^2 
    \leq \c{decompOmega} \norm{\widehat V_\ell}{H^1(\Omega)}^2.
  \end{align}
  The constant $\setc{decompOmega}>0$ depends only on $\Omega$ and the $\gamma$-shape regularity of $\TT_\ell^\Omega$.
  \qed
\end{lemma}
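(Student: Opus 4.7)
The plan is to treat the upper and lower bounds separately, invoking the two-level hierarchical basis framework of Yserentant that the paper already cites for this result.

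\textbf{Upper bound.} Since $\proj_{T,j}^\Omega$ is the $H^1$-orthogonal projection onto the one-dimensional space $\linhull\{v_{T,j}\}$, Cauchy--Schwarz yields
$$\|\proj_{T,j}^\Omega \widehat V_\ell\|_{H^1(\Omega)}^2 = \frac{|(\widehat V_\ell, v_{T,j})_{H^1(\Omega)}|^2}{\|v_{T,j}\|_{H^1(\Omega)}^2} \leq \|\widehat V_\ell\|_{H^1(\mathrm{supp}\,v_{T,j})}^2.$$
The support of $v_{T,j}$ is contained in a uniformly bounded patch around $T$, and by $\gamma$-shape regularity of $\TT_\ell^\Omega$ only a finite, $\gamma$-controlled number of the $v_{T,j}$ share support with any fixed point of $\Omega$. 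Summing over $T\in\TT_\ell^\Omega$ and $j=1,\ldots,D^\Omega$ and combining with the trivial bound $\|\proj_\ell^\Omega\widehat V_\ell\|_{H^1(\Omega)}\leq\|\widehat V_\ell\|_{H^1(\Omega)}$ yields the upper estimate.

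\textbf{Lower bound.} This is the substantive direction. I would reformulate it as spectral equivalence of the additive Schwarz operator associated with the two-level splitting $\SS^1(\widehat\TT_\ell^\Omega) = \SS^1(\TT_\ell^\Omega) + \sum_{T,j}\linhull\{v_{T,j}\}$. The abstract additive Schwarz lemma reduces the bound to the existence of a stable decomposition: every $\widehat V_\ell\in\SS^1(\widehat\TT_\ell^\Omega)$ admits a splitting $\widehat V_\ell = V_\ell + \sum_{T,j}\alpha_{T,j}v_{T,j}$ with $V_\ell\in\SS^1(\TT_\ell^\Omega)$ satisfying
$$\|V_\ell\|_{H^1(\Omega)}^2 + \sum_{T,j}|\alpha_{T,j}|^2\|v_{T,j}\|_{H^1(\Omega)}^2 \lesssim \|\widehat V_\ell\|_{H^1(\Omega)}^2.$$
A natural candidate for $V_\ell$ is the Scott--Zhang quasi-interpolant onto the coarse space; the surplus $\widehat V_\ell - V_\ell$ is then expanded in the hierarchical bubble basis, and the coefficient norm equivalence on $\linhull\{v_{T,j}\}_{T,j}$ follows from finite overlap together with local mass-matrix estimates on the reference element.

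The main obstacle, as in Yserentant's original analysis, is to control the strengthened Cauchy--Schwarz angle between the coarse space $\SS^1(\TT_\ell^\Omega)$ and the bubble space uniformly in $\ell$. For two-level (as opposed to genuinely multilevel) splittings this angle is bounded away from zero by a constant depending only on $\gamma$-shape regularity, which is precisely the content of \cite[Theorem~4.1]{harry86}. Since \cite[Lemma~3.1]{ms:fembem} carries out this program in exactly the present notation, I would ultimately invoke these references to conclude.
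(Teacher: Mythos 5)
Your proposal is correct and ultimately rests on exactly the same foundation as the paper: the paper gives no proof of this lemma at all, but simply cites \cite[Theorem~4.1]{harry86} and \cite[Lemma~3.1]{ms:fembem}, which is precisely where your argument ends up as well. The extra detail you supply (Cauchy--Schwarz plus finite overlap for the upper bound, a stable two-level decomposition for the lower bound) is the standard route behind those references and is sound.
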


The following lemma is found in~\cite[Proposition~4.5]{effp} and provides
a hierarchical splitting of $\PP^0(\widehat\TT_\ell^\Gamma)$. Although~\cite{effp} is only formulated for 2D BEM, the results and proofs hold verbatim for 3D.
(For 3D BEM and uniform meshes, the claim is already found in~\cite{msw}).

\begin{lemma}\label{lemma:decomp:Gamma}
  Let $\proj_\ell^\Gamma : H^{-1/2}(\Gamma) \to \PP^0(\TT_\ell^\Gamma)$ and
$\proj_{T,j}^\Gamma : H^{-1/2}(\Gamma) \to \linhull\{\psi_{T,j}\}$ denote the orthogonal projections with respect to the $\slp$-induced scalar product $\dual{\cdot}\cdot_\slp$ on $H^{-1/2}(\Gamma)$. For $\widehat \Psi_\ell\in\PP^0(\widehat\TT_\ell^\Gamma)$, it then holds
  \begin{align}
    \c{decompGamma}^{-1} \norm{\widehat \Psi_\ell}{\slp}^2 \leq \norm{\proj_\ell^\Gamma \widehat \Psi_\ell}{\slp}^2
    +  \sum_{T\in\TT_\ell^\Gamma} \sum_{j=1}^{D^\Gamma} \norm{\proj_{j,T}^\Gamma \widehat \Psi_\ell}{\slp}^2 
    \leq \c{decompGamma} \norm{\widehat \Psi_\ell}{\slp}^2.
  \end{align}
  The constant $\setc{decompGamma}>0$ depends only on $\Gamma$ and the $\gamma$-shape regularity of $\TT_\ell^\Gamma$.
  \qed
\end{lemma}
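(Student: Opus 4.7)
The plan is to reformulate the middle expression as the quadratic form of the additive Schwarz operator $\mathcal Q := \proj_\ell^\Gamma + \sum_{T,j}\proj_{T,j}^\Gamma$ and prove spectral equivalence $\mathcal Q\simeq I$ by arguments built on the $h$-weighted $L^2$-characterization of the $H^{-1/2}(\Gamma)$-norm on elementwise-mean-zero functions.

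\textbf{Reformulation.} Since each $\proj_\ell^\Gamma$ and $\proj_{T,j}^\Gamma$ is an $\slp$-orthogonal projection, the identity $\norm{\proj\,x}{\slp}^2=\dual{\proj\,x}{x}_\slp$ gives
\begin{align*}
  \norm{\proj_\ell^\Gamma\widehat\Psi_\ell}{\slp}^2 + \sum_{T,j}\norm{\proj_{T,j}^\Gamma\widehat\Psi_\ell}{\slp}^2 = \dual{\mathcal Q\widehat\Psi_\ell}{\widehat\Psi_\ell}_\slp,
\end{align*}
so the claim is equivalent to $\c{decompGamma}^{-1}\,I\le\mathcal Q\le\c{decompGamma}\,I$ on $\PP^0(\widehat\TT_\ell^\Gamma)=\PP^0(\TT_\ell^\Gamma)+\sum_{T,j}\linhull\{\psi_{T,j}\}$ in the $\slp$-inner product. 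Working in the equivalent $H^{-1/2}$-setting via ellipticity of the simple-layer operator, the central analytic tool is the $h$-weighted $L^2$-characterization
\begin{align*}
  \norm{V}{H^{-1/2}(\Gamma)}^2 \simeq \norm{h_\ell^{1/2} V}{L^2(\Gamma)}^2 \ \text{ for all } V\in\PP^0(\widehat\TT_\ell^\Gamma) \text{ with } \textstyle\int_T V\,d\Gamma=0\ \forall\,T\in\TT_\ell^\Gamma,
\end{align*}
a Faermann/Carstensen-type localization on the elementwise-mean-zero subspace whose constants depend only on $\Gamma$ and $\gamma$-shape regularity. Applied to the individual $\psi_{T,j}$ it yields the scaling $\norm{\psi_{T,j}}{\slp}^2 \simeq h_T\norm{\psi_{T,j}}{L^2(T)}^2$.

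\textbf{Upper bound.} For each local projection, $\proj_{T,j}^\Gamma\widehat\Psi_\ell = \alpha_{T,j}\psi_{T,j}$ with $\alpha_{T,j} = \dual{\widehat\Psi_\ell}{\psi_{T,j}}_\slp/\norm{\psi_{T,j}}{\slp}^2$. Using $\int_T\psi_{T,j}\,d\Gamma=0$ to subtract the $L^2(T)$-mean of $\slp\widehat\Psi_\ell$, Cauchy-Schwarz, a Poincar\'e inequality on $T$, and the above scaling of $\norm{\psi_{T,j}}{\slp}$ yields
\begin{align*}
  \norm{\proj_{T,j}^\Gamma\widehat\Psi_\ell}{\slp}^2 = \alpha_{T,j}^2\,\norm{\psi_{T,j}}{\slp}^2 \lesssim h_T\,\norm{\nabla_\Gamma\slp\widehat\Psi_\ell}{L^2(T)}^2.
\end{align*}
Summing over $T$ and $j$ (with $D^\Gamma$ uniformly bounded by $\gamma$-shape regularity) and invoking the inverse estimate $\norm{h^{1/2}\nabla_\Gamma\slp V}{L^2(\Gamma)}\lesssim \norm{V}{H^{-1/2}(\Gamma)}$ from~\cite[Corollary~3]{afembem} applied on the uniform refinement $\widehat\TT_\ell^\Gamma$, gives $\sum_{T,j}\norm{\proj_{T,j}^\Gamma\widehat\Psi_\ell}{\slp}^2 \lesssim\norm{\widehat\Psi_\ell}{\slp}^2$. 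Together with $\norm{\proj_\ell^\Gamma\widehat\Psi_\ell}{\slp}\le\norm{\widehat\Psi_\ell}{\slp}$ this delivers the upper inequality.

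\textbf{Lower bound and main obstacle.} For the lower bound I would verify the stable decomposition demanded by the Lions/Xu additive-Schwarz lemma. The natural choice is the unique $L^2$-orthogonal splitting $\widehat\Psi_\ell=\Psi_c+\Psi_\perp$ with $\Psi_c\in\PP^0(\TT_\ell^\Gamma)$ the $L^2$-projection and $\Psi_\perp=\sum_{T,j}c_{T,j}\psi_{T,j}$, which is elementwise mean zero by $\int_T\psi_{T,j}\,d\Gamma=0$. The weighted-$L^2$ equivalence yields both $\sum_{T,j}c_{T,j}^2\norm{\psi_{T,j}}{\slp}^2\simeq\norm{h_\ell^{1/2}\Psi_\perp}{L^2(\Gamma)}^2\simeq\norm{\Psi_\perp}{\slp}^2$ (using $L^2$-orthogonality of the $\psi_{T,j}$ across different elements) and $\norm{\Psi_c}{\slp}^2+\norm{\Psi_\perp}{\slp}^2\lesssim\norm{\widehat\Psi_\ell}{\slp}^2$; the latter is the nontrivial $H^{-1/2}$-stability of the $L^2$-projection, obtained by combining the weighted-$L^2$ equivalence for $\Psi_\perp=\widehat\Psi_\ell-\Psi_c$ with the inverse estimate from~\cite[Corollary~3]{afembem}. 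The Lions/Xu lemma then converts this stable decomposition into $\dual{\mathcal Q\widehat\Psi_\ell}{\widehat\Psi_\ell}_\slp\gtrsim\norm{\widehat\Psi_\ell}{\slp}^2$, concluding the proof. The principal obstacle is precisely the weighted-$L^2$ characterization of the $H^{-1/2}$-norm on the elementwise-mean-zero subspace, which must hold on the graded meshes produced by Algorithm~\ref{algorithm} with constants depending only on $\gamma$-shape regularity; this is exactly the content of~\cite[Proposition~4.5]{effp}, whose 2D proof carries over verbatim to 3D.
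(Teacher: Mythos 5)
The paper itself offers no proof of this lemma: it is quoted with \qed from \cite[Proposition~4.5]{effp} (with the remark that the 2D proof carries over verbatim to 3D, and with \cite{msw} for uniform 3D meshes). Your argument is, in substance, the standard proof underlying those references: rewrite the middle term as $\dual{\mathcal Q\widehat\Psi_\ell}{\widehat\Psi_\ell}_\slp$ for the additive Schwarz operator, use the local scaling $\norm{\psi_{T,j}}{\slp}^2\simeq\diam(T)\,\norm{\psi_{T,j}}{L^2(T)}^2$ (inverse estimate of \cite{ghs} plus the approximation result of \cite{ccdpr:symm}, exactly as in the proof of Theorem~\ref{thm:symm:twolevel}, both needing $\int_T\psi_{T,j}\,d\Gamma=0$), a Poincar\'e argument and the inverse estimate~\eqref{eq:novel:invest} for the upper bound, and a stable splitting (coarse elementwise mean plus elementwise mean-zero remainder) together with Lions' lemma for the lower bound; this is sound, and correctly notes that the inverse estimates are applied on $\widehat\TT_\ell^\Gamma$, whose mesh-width is comparable to $h_\ell$. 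Two small repairs are needed. First, for $\sum_{T,j}c_{T,j}^2\norm{\psi_{T,j}}{\slp}^2\simeq\norm{h_\ell^{1/2}\Psi_\perp}{L^2(\Gamma)}^2$ you need, besides the disjoint supports across elements, the uniform $L^2(T)$-stability of the family $\{\psi_{T,j}\}_{j=1}^{D^\Gamma}$ within each element; cross-element orthogonality alone does not control the cross-$j$ terms. This holds for the standard sign-pattern bases of Figure~\ref{fig:twolevel:3D} by a scaling argument, but it is an assumption on the chosen hierarchical basis and should be stated. Second, in the lower bound the estimate $\norm{h_\ell^{1/2}\widehat\Psi_\ell}{L^2(\Gamma)}\lesssim\norm{\widehat\Psi_\ell}{\H^{-1/2}(\Gamma)}$ is the inverse estimate of \cite[Theorem~3.6]{ghs}, not \cite[Corollary~3]{afembem}; the latter is the correct citation only for the $\nabla_\Gamma\slp$-estimate in your upper bound. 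With these two points fixed, your proof is a legitimate self-contained substitute for the external citation, at the price of re-deriving what \cite{effp,msw} already provide.
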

  
\begin{proof}[Proof of Theorem~\ref{thm:fembem:twolevel}]
  The proof is similar to the one of Theorem~\ref{thm:symm:twolevel} and relies on the verification
  of~\eqref{ass:local}--\eqref{ass:stable} to apply Proposition~\ref{prop:new}. 
  For patches, we use the notation~\eqref{eq:kpatch} from the proof of Theorem~\ref{thm:faermann-convergence}, but now
defined for volume elements, i.e., $\TT_\ell^\Omega$ instead of 
$\TT_\ell^\Gamma=\TT_\ell$ in~\eqref{eq:kpatch}.

\next   We define the equivalent mesh-size function $\widetilde h_\ell : \Omega \to \R$ as in~\eqref{eq:mannomann} in the proof of
  Theorem~\ref{thm:faermann-convergence}, but
  now for volume elements $T\in\TT_\ell^\Omega$, as 
  well as $\widetilde h_\ell(T) := |T|^{1/(d-1)}$ for boundary elements $T\in\TT_\ell^\Gamma$.
  The auxiliary estimator $\rho_\ell(F)^2:= \sum_{T\in\TT_\ell} \rho_\ell(F;T)^2$ is defined by
  \begin{subequations}
  \begin{align}
  \begin{split}
    \rho_\ell(F;T)^2 := &\norm{\widetilde h_\ell f}{L^2(T)}^2 + \norm{\widetilde h_\ell^{1/2} [B\nabla
    U_\ell\cdot\normal]}{L^2(\partial T\backslash\Gamma)}^2 \\&+\norm{\widetilde h_\ell^{1/2}(\phi_0 + \Phi_\ell - B\nabla
    U_\ell\cdot \normal)}{L^2(\partial T\cap\Gamma)}^2 
\end{split}
  \end{align}
  for volume elements $T\in\TT_\ell^\Omega$ and
  \begin{align}\label{eq:hassmich1}
    \rho_\ell(F;T)^2 := \norm{\widetilde h_\ell^{1/2}\nabla_\Gamma \big( (1/2-\dlp)(U_\ell-u_0) +
    \slp\Phi_\ell\big)}{L^2(T)}^2
  \end{align}
  \end{subequations}
  for boundary elements $T\in\TT_\ell^\Gamma$. We note that $\eta_\ell(F;T) \simeq \rho_\ell(F;T)$ for all $T\in\TT_\ell^\Omega\cup\TT_\ell^\Gamma$, where the 
  hidden constants
  depend only on the $\gamma$-shape regularity of $\TT_\ell^\Omega$.

\next To prove~\eqref{ass:local}, we proceed similar to the proof 
of~\cite[Theorem~12]{afkp:fembem}.
   Let $T\in\TT_\ell^\Omega$. 
  Denote by $\EE_\ell^\Omega(z_{T,j})$ all interior facets of the patch 
  $\omega_\ell(z_{T,j}):=\set{T'\in\TT_\ell^\Omega}{z_{T,j}\in T'}\subseteq\TT_\ell^\Omega$.
  Piecewise integration by parts shows
  \begin{align*}
    \dual{F-A\UU_\ell(F)}{(v_{T,j},0)} &= \int_\Omega fv_{T,j}\,d\Omega + \int_\Gamma (\phi_0+\Phi_\ell)v_{T,j} \,d\Gamma - \int_\Omega
    B\nabla U_\ell \cdot\nabla v_{T,j} \,d\Omega \\
    &= \sum_{ T'\in \omega_\ell(z_{T,j})} \Big( \int_{T'} fv_{T,j}\,d\Omega + \int_{\Gamma\cap\partial
    T'} (\phi_0+\Phi_\ell-B\nabla U_\ell\cdot\normal) v_{T,j} \,d\Gamma \Big)\\
    &\qquad\qquad - \sum_{E\in\EE_\ell^\Omega(z_{T,j})} \int_{E} 
    [B\nabla U_\ell\cdot\normal]v_{T,j} \,dE,
  \end{align*}
  where we have used that $\mathrm{div}B\nabla U_\ell = 0$ on each element $T\in\TT_\ell^\Omega$.
  Note that 
  $$\diam(T)\,\norm{\nabla v_{T,j}}{L^2(\Omega)}\simeq\norm{v_{T,j}}{L^2(\Omega)}\simeq\diam(T)^{d/2}.$$
and consequently also
  $\norm{v_{T,j}}{L^2(E)} \lesssim \diam(T)^{(d-1)/2}$ for each facet $E\subseteq T$.
  For the volume contributions of the two-level estimator, this yields the estimate
  \begin{align*}
    \mu_{\ell,j}(F;T)^2 &\lesssim \diam(T)^2 \norm{f}{L^2(\omega_\ell(z_{T,j}))}^2 +
    \sum_{ T'\in \omega_\ell(z_{T,j})}\diam(T)\,
    \norm{[B\nabla U_\ell \cdot\normal]}{L^2(\partial T'\backslash\Gamma)}^2 \\
    &\qquad+ \sum_{ T'\in \omega_\ell(z_{T,j})}\diam(T) \,\norm{\phi_0 + \Phi_\ell -B\nabla
    U_\ell\cdot\normal}{L^2(\partial T'\cap \Gamma)}^2 \\
    &\lesssim \eta_\ell(F; \omega_\ell(z_{T,j}))^2 
    \simeq \rho_\ell(F; \omega_\ell(z_{T,j}))^2. 
  \end{align*}
  The contribution $\mu_\ell(F;T)$ of the two-level estimator for boundary elements $T\in\TT_\ell^\Gamma$ coincides essentially with the
    two-level estimator~\eqref{eq:symm:twolevel} of Section~\ref{section:symm}, and $\eta_\ell(F;T)$ coincides essentially with the corresponding definition~\eqref{eq:resmean} in
    Section~\ref{section:symm}.
    Arguing along the lines of Theorem~\ref{thm:symm:twolevel}, we hence obtain for
    each boundary element $T\in\TT_\ell^\Gamma$
  \begin{align*}
    \mu_{\ell,j}(F;T)^2 \lesssim \eta_{\ell}(F;T)^2
    \simeq \rho_{\ell}(F;T)^2.
  \end{align*}
  Summing over all $j$ and $T\in\MM_\ell = \MM_\ell^\Omega \cup \MM_\ell^\Gamma \subseteq
  \TT_\ell^\Omega\cup\TT_\ell^\Gamma$,
  we prove assumption~\eqref{ass:local}
   with $\RR_\ell = \RR_\ell^\Omega \cup \RR_\ell^\Gamma 
  = \omega_\ell(\MM_\ell^\Omega) \cup \MM_\ell^\Gamma$.

  \next
  For the verification of~\eqref{ass:contraction} we proceed similar to the proof of Theorem~\ref{thm:symm:twolevel} and
  Theorem~\ref{thm:faermann-convergence}.
  Each contribution of the estimator $\rho_\ell(F)$ can be estimated separately.

  First, note that $\widetilde h_{\ell+1}|_{\bigcup \RR_\ell^\Omega} \leq q \widetilde
  h_\ell|_{\bigcup\RR_\ell^\Omega}$ for the constant $0<q<1$ from~\eqref{eq:mannomann}. Therefore, $\norm{\widetilde h_{\ell+k} f}{L^2(\bigcup
  \RR_\ell^\Omega)}^2 \leq q^2 \norm{\widetilde h_\ell f}{L^2(\bigcup\RR_\ell^\Omega)}^2$, and we further obtain
  \begin{align*}
    (1-q^2) \norm{\widetilde h_\ell f}{L^2(\bigcup\RR_\ell^\Omega)}^2 \leq \norm{\widetilde h_\ell f}{L^2(\Omega)}^2 -
    \norm{\widetilde h_{\ell+k} f}{L^2(\Omega)}^2.
  \end{align*}

  Second, note that $\widetilde h_\ell - \widetilde h_{\ell+k} \geq (1-q) \widetilde h_{\ell}$ on
  $\bigcup\RR_\ell^\Omega$. We estimate
  \begin{align*}
    &(1-q) \sum_{T\in\RR_\ell^\Omega} \norm{\widetilde h_\ell^{1/2} [B\nabla U_\ell\cdot\normal]}{L^2(\partial
    T\backslash\Gamma)}^2 \leq \sum_{T\in\RR_\ell^\Omega} \norm{(\widetilde h_\ell-\widetilde h_{\ell+k})^{1/2}
    [B\nabla U_\ell\cdot\normal]}{L^2(\partial T\backslash\Gamma)}^2 \\
    &\qquad \leq \sum_{T\in\TT_\ell^\Omega} \norm{\widetilde h_\ell^{1/2}
    [B\nabla U_\ell\cdot\normal]}{L^2(\partial T\backslash\Gamma)}^2 -
    \sum_{T\in\TT_\ell^\Omega} \norm{\widetilde h_{\ell+k}^{1/2}  [B\nabla U_\ell\cdot\normal]}{L^2(\partial
    T\backslash\Gamma)}^2.
  \end{align*}
  For the second term, we note that the jumps $[B\nabla U_\ell\cdot\normal]$ across newly created facets in
  $\TT_{\ell+k}^\Omega$ vanish. Hence, $\sum_{T\in\TT_\ell^\Omega} \norm{\widetilde h_{\ell+k}^{1/2}  [B\nabla U_\ell\cdot\normal]}{L^2(\partial
  T\backslash\Gamma)}^2 = \sum_{T\in\TT_{\ell+k}^\Omega} \norm{\widetilde h_{\ell+k}^{1/2}  [B\nabla U_\ell\cdot\normal]}{L^2(\partial
  T\backslash\Gamma)}^2$.
  The triangle inequality and Young's inequality yield, for all $\delta>0$, 
  \begin{align*}
    \sum_{T\in\TT_{\ell+k}^\Omega} \norm{\widetilde h_{\ell+k}^{1/2}  [B\nabla U_\ell\cdot\normal]}{L^2(\partial
    T\backslash\Gamma)}^2 &\leq  (1\!+\!\delta) \! \sum_{T\in\TT_{\ell+k}^\Omega} \norm{\widetilde h_{\ell+k}^{1/2}[B\nabla
    U_{\ell+k}\cdot\normal]}{L^2(\partial T\backslash\Gamma)}^2 \\ &\quad + (1\!+\!\delta^{-1}) \!
    \sum_{T\in\TT_{\ell+k}^\Omega} \norm{\widetilde h_{\ell+k}^{1/2}[(B\nabla
    U_\ell\!-\!B\nabla U_{\ell+k})\cdot\normal]}{L^2(\partial T\backslash\Gamma)}^2.
  \end{align*}
  A scaling argument and Lipschitz continuity of $B$ show
   that $\sum_{T\in\TT_{\ell+k}^\Omega} \norm{\widetilde h_{\ell+k}^{1/2}
  [(B\nabla U_\ell-B\nabla U_{\ell+k})\cdot\normal]}{L^2(\partial T\backslash \Gamma)}^2 \leq \Cinv
  \norm{U_\ell-U_{\ell+k}}{H^1(\Omega)}^2$.  The constant $\Cinv>0$ depends only on $\c{lipB}$ and $\gamma$-shape regularity of $\TT_\ell^\Omega$.
  Details can
  be found, e.g., in the proof of~\cite[Theorem~15]{afembem}.
  Arguing as in the proof of Theorem~\ref{thm:symm:twolevel}, we
 obtain
  \begin{align*}
    &(1-q) \sum_{T\in\RR_\ell^\Omega} \norm{\widetilde h_\ell^{1/2} [B\nabla U_\ell\cdot\normal]}{L^2(\partial
    T\backslash\Gamma)}^2 \\
    &\qquad\leq \sum_{T\in\TT_\ell^\Omega} \norm{\widetilde h_\ell^{1/2} [B\nabla
    U_\ell\cdot\normal]}{L^2(\partial T\backslash\Gamma)}^2 - \frac1{1+\delta} \sum_{T\in\TT_{\ell+k}^\Omega}
    \norm{\widetilde h_{\ell+k}^{1/2} [B\nabla U_{\ell+k}\cdot\normal]}{L^2(\partial T\backslash\Gamma)}^2 \\
    &\hspace*{6cm} + \frac{1+\delta^{-1}}{1+\delta} \Cinv \norm{U_\ell-U_{\ell+k}}{H^1(\Omega)}^2.
  \end{align*}
 
 Third, similar arguments as before yield
  \begin{align*}
    &(1-q) \sum_{T\in\RR_\ell^\Omega} \norm{\widetilde h_\ell^{1/2}(\phi_0 + \Phi_\ell - B\nabla
    U_\ell\cdot\normal)}{L^2(\partial T\cap \Gamma)}^2 \\
    &\leq \sum_{T\in\TT_\ell^\Omega} \norm{\widetilde h_\ell^{1/2} (\phi_0 + \Phi_\ell - B\nabla
    U_\ell\cdot\normal)}{L^2(\partial T\cap\Gamma)}^2 \\ &\quad- \frac1{1+\delta} \sum_{T\in\TT_{\ell+k}^\Omega}
    \norm{\widetilde h_{\ell+k}^{1/2} (\phi_0 + \Phi_{\ell+k} - B\nabla U_{\ell+k}\cdot\normal)}{L^2(\partial T\cap\Gamma)}^2 
    + \frac{1+\delta^{-1}}{1+\delta} \Cinv \norm{\UU_\ell-\UU_{\ell+k}}{\HH}^2.
  \end{align*}

  Fourth, note that $\rho_\ell(F;T)$ for boundary elements $T\in\TT_\ell^\Gamma$ is similarly defined as in
  the proof of Theorem~\ref{thm:symm:twolevel}.
  Therefore, the contraction of the BEM contribution $\rho_\ell(F;\RR_\ell^\Gamma)$ from~\eqref{eq:hassmich1} follows with the same arguments as in the proof of
  Theorem~\ref{thm:symm:twolevel}. In addition to the inverse estimate~\eqref{eq:novel:invest} for the simple-layer integral operator $\slp$, we require a similar estimate for the double-layer
  integral operator
  \begin{align}\label{eq:invest:dlp}
   \norm{h_\ell^{1/2}\nabla_\Gamma(1/2-\dlp)U_\ell}{L^2(\Gamma)}
   \lesssim \norm{U_\ell}{H^{1/2}(\Gamma)},
  \end{align}
  which is also provided by~\cite[Corollary~3]{afembem}.
  
Combining the last four steps, we prove assumption~\eqref{ass:contraction}.

  \next
  For the last assumption~\eqref{ass:stable}, the definition of $\mu_\ell$ from~\eqref{eq:fembem:twolevel} shows
  \begin{align}\label{eq:proof:fembem:a4:1}
  \begin{split}
    |\mu_\ell(F;\MM_\ell)-\mu_\ell(F';\MM_\ell)|^2 &\leq \sum_{T\in\TT_\ell^\Omega} \sum_{j=1}^{D^\Omega} 
    \frac{\dual{F-F'-(A\UU_\ell(F)-A\UU_\ell(F'))}{(v_{T,j},0)}^2}{\norm{v_{T,j}}{H^1(\Omega)}^2} \\
    &\quad+ \sum_{T\in\TT_\ell^\Gamma} \sum_{j=1}^{D^\Gamma} 
    \frac{\dual{F-F'-(A\UU_\ell(F)-A\UU_\ell(F'))}{(0,\psi_{T,j})}^2}{\norm{\psi_{T,j}}\slp^2}.
  \end{split}
  \end{align}
 Define the scalar product
  \begin{align*}
    \edual{\uu}{\vv} := \int_\Omega \nabla u\cdot\nabla v \,d\Omega + \int_\Omega uv\,d\Omega + \dual{\phi}{\psi}_\slp
  \end{align*}
  for all $\uu=(u,v),\vv=(v,\psi)\in\HH$ with induced norm $\enorm{\cdot}^2=\edual\cdot\cdot$. By the Riesz theorem, there exists a unique
  $\widehat\WW_\ell = (W_\ell,\Xi_\ell)\in \widehat\XX_\ell$ with
  \begin{align*}
    \edual{\widehat\WW_\ell}{\widehat\VV_\ell} = \dual{F-F'-(A\UU_\ell(F)-A\UU_\ell(F'))}{\widehat\VV_\ell}
  \end{align*}
  Let $\proj_\ell : \HH\to\XX_\ell$ with $\proj_\ell\vv := (\proj_\ell^\Omega v,\proj_\ell^\Gamma \psi)$ for all $\vv=(v,\psi)\in\HH$.
  Together with symmetry of the orthogonal projection $\proj_\ell$, the last identity and the Galerkin orthogonality prove
  \begin{align*}
    \enorm{\proj_\ell \widehat\WW_\ell}^2 = \edual{\proj_\ell \widehat\WW_\ell}{\proj_\ell \widehat\WW_\ell} =
    \edual{\widehat\WW_\ell}{\proj_\ell \widehat\WW_\ell} = 0.
  \end{align*}
  From Lemma~\ref{lemma:decomp:Omega} and Lemma~\ref{lemma:decomp:Gamma}, it thus follows
  \begin{align*}
    \enorm{\widehat\WW_\ell}^2 \simeq \sum_{T\in\TT_\ell^\Omega} \sum_{j=1}^{D^\Omega} \norm{\proj_{T,j}^\Omega \widehat W_\ell}{H^1(\Omega)}^2
    +  \sum_{T\in\TT_\ell^\Gamma}\sum_{j=1}^{D^\Gamma} \norm{\proj_{T,j}^\Gamma \widehat \Xi_\ell}{\slp}^2.
  \end{align*}
  We stress that the last term is equal to the right-hand side of~\eqref{eq:proof:fembem:a4:1} and proceed by using the Lipschitz continuity
  of $A$ to estimate
  \begin{align*}
  |\mu_\ell(F;\MM_\ell)-\mu_\ell(F';\MM_\ell)| \lesssim
    \enorm{\widehat\WW_\ell} 
    &= \norm{F-F'-(A\UU_\ell(F)-A\UU_\ell(F'))}{\widehat\XX_\ell^*} 
    \\&\lesssim \norm{F-F'}{\HH^*} +
    \norm{\UU_\ell(F)-\UU_\ell(F')}{\HH}.
  \end{align*}
  Arguing along the lines of Proposition~\ref{prop:fembem}, one proves that $A$ is even
  bi-Lipschitz continuous with
   respect to the discrete dual space $\XX_\ell^*$, i.e., $\norm{\VV_\ell-\widetilde\VV_\ell}\HH
  \simeq \norm{A\VV_\ell-A\widetilde\VV_\ell}{\XX_\ell^*}$ for all $\VV_\ell,\widetilde\VV_\ell\in\XX_\ell$. 
  Therefore, we get
  \begin{align*}
    \norm{\UU_\ell(F)-\UU_\ell(F')}\HH \simeq \norm{A\UU_\ell(F)-A\UU_\ell(F')}{\XX_\ell^*} = \norm{F-F'}{\XX_\ell^*}\leq
    \norm{F-F'}{\HH^*}.
  \end{align*}

  Altogether, we see
  \begin{align*}
    |\mu_\ell(F;\MM_\ell)-\mu_\ell(F';\MM_\ell)| \lesssim \enorm{\widehat\WW_\ell} \lesssim \norm{F-F'}{\HH^*},
  \end{align*}
  which proves assumption~\eqref{ass:stable}.
  
\end{proof}

\subsection{Remarks and extensions}
Although this section focused on the Johnson-N\'ed\'elec coupling~\cite{johned}, the same results 
hold also for the symmetric coupling~\cite{costabel} and the one-equation Bielak-MacCamy coupling~\cite{bmc}. We refer to~\cite{cs95:fembem} for the symmetric coupling in the presence of
strongly monotone nonlinearities and the first introduction of the corresponding 
weighted-residual error estimator and to~\cite{ms} for the corresponding two-level
estimator. 

In~\cite{cs95:fembem}, the analysis, based on the discrete (symmetric) Steklov-Poincar\'e operator, required the additional assumption that the initial boundary mesh $\TT_0^\Gamma$ is sufficiently fine. This assumption has first been proved to be unnecessary 
in~\cite{hh2fembem}, where the original argument of~\cite{cs95:fembem} is refined.
We note that even the extended argument is restricted to the symmetric Steklov-Poincar\'e operator
and thus only applies to the symmetric coupling.
The method of implicit stabilization from~\cite{affkmp:fembem} provides an alternate
proof of this fact which also transfers to the Johnson-N\'ed\'elec as well as the
Bielak-MacCamy coupling, i.e., no assumption on $\TT_0^\Gamma$ is required.

For the Bielak-MacCamy coupling, well-posedness of the coupling formulation in the presence of strongly monotone nonlinearities has
first been proved in~\cite{affkmp:fembem}, where also the corresponding weighted-residual error estimator is derived. The derivation of the corresponding 
two-level error estimator is not found in the literature yet, but is easily obtained
by adapting the arguments of, e.g., \cite{ms,afkp:fembem}.

Finally, we note that we only restricted to the lowest-order case $\XX_\star = \SS^p(\TT_\star^\Omega)\times\PP^{p-1}(\TT_\star^\Gamma)$ with $p=1$ for the ease
of presentation. All results also hold accordingly for higher order $p\ge1$.


\bigskip

\noindent
{\bf Acknowledgement.}
The research of MF, TF, GME, and DP is supported by the Aus-
trian Science Fund (FWF) through the research project {\em Adaptive boundary element method} funded under grant P21732
and the research project {\em Optimal adaptivity for BEM and FEM-BEM coupling}
funded under grant P27005. In addition, TF acknowledges support through
the {\em Innovative projects initiative} of Vienna University of Technology
(TU Wien), and MF and DP acknowledge support through the FWF doctoral program {\em Dissipation and dispersion in nonlinear PDEs} funded under
grant W1245.


\bibliographystyle{alpha}
\bibliography{literature}

\end{document}